\newcommand{\R}{\mathbb{R}}
\newcommand{\D}{\mathcal{D}}
\newtheorem{theorem}{Theorem}[section]
\newtheorem{proposition}[theorem]{Proposition}
\newtheorem{lemma}[theorem]{Lemma}
\newtheorem{remark}[theorem]{Remark}
\newtheorem{definition}[theorem]{Definition}
\DeclareMathOperator{\Sha}{\mathrm{III}^{\mathcal D}}
\title[Schatten class in the Bloom setting]{Schatten classes and commutators of Riesz transforms in the two weight setting}
\author{Michael Lacey}
\address{Michael Lacey, Department of Mathematics, Georgia Institute of Technology Atlanta, GA 30332, USA}
\email{lacey@math.gatech.edu}
\author{Ji Li}
\address{Ji Li, School of Mathematical and Physical Sciences, Macquarie University, NSW 2109, Australia}
\email{ji.li@mq.edu.au}
\author{Brett D. Wick}
\address{Brett D. Wick, Department of Mathematics, Washington University - St. Louis, St. Louis, MO 63130-4899 USA}
\email{wick@math.wustl.edu}
\author{Liangchuan Wu}
\address{Liangchuan Wu,  School of Mathematical Science, Anhui University, Hefei, 230601, P.R.~China,  and 
School of Mathematical and Physical Sciences, Macquarie University, NSW 2109, Australia}
\email{wuliangchuan@ahu.edu.cn}
\thanks{MSC 2020: 47B10, 42B20, 42B35.}
\thanks{ Key words and phrases: Schatten class, commutator, Riesz transform, weighted Besov space, dyadic structure.}
\begin{document}
\begin{abstract} 
We characterize the Schatten class $S^p$ of the commutator of Riesz transforms 
$[b,R_j]$ in $\mathbb R^n$ ($j=1,\ldots, n$) in the two weight setting for $n< p<\infty$,  
by introducing the condition that the symbol $b$ being in Besov spaces associated with the given two weights. 
At the critical index $p=n$,  the commutator $[b,R_j]$ belongs to Schatten class $S^{n}$ if and only if $b$ is a constant,  and to the weak Schatten class $S^{n,\infty}$ 
if and only if $b$ is in an oscillation sequence space associated with the given two weights.  
As a direct application, we have the Schatten class estimate for A. Connes' quantised derivative in the two weight setting.
\end{abstract}

\maketitle
\tableofcontents 


\section{Introduction}
\setcounter{equation}{0}

Suppose $1<p<\infty$. Let $\mu,\, \lambda$ be in the Muckenhoupt class $A_p$,  $\nu= \mu^{1\over p}\lambda^{-{1\over p}}$, 
and  denote by $R_j$  the $j$\,th Riesz transform on $\mathbb R^n$, $j=1,\ldots, n$.  
In \cite{B} Bloom first showed that the commutator $[b,R_j]$, defined as  $[b,R_j] f(x)=b(x) R_jf(x)-R_j(bf)(x)$, 
is bounded from $L^p_\mu(\mathbb R^n)$ to $L^p_\lambda(\mathbb R^n)$ if and only if $b$ is in ${\rm BMO}_\nu(\mathbb R^n)$, 
which extends the remarkable result of Coifman--Rochberg--Weiss \cite{CRW1976}. 
Just recently, the first and second authors \cite{LL2022} showed that 
$[b,R_j]$ is compact from $L^p_\mu(\mathbb R^n)$ to $L^p_\lambda(\mathbb R^n)$ 
if and only if $b$ is  in ${\rm VMO}_\nu(\mathbb R^n)$ via a constructive proof. 
This was also addressed in \cite{HL1, HL2} via extrapolation.   
A next step in this program is to understand the Schatten class membership 
(the definition is given in \S\ref{s:Schatten}) of the commutator as this is a more refined property than compactness.

Schatten class estimates provide a better understanding of the behaviour of compact operators in infinite dimensional spaces 
and connect to non-commutative analysis, geometry, and complex analysis 
(see for example \cite{Zhu91, CST1994, Pisier, JX10, I2013, Pau16, LMSZ2017}). 
It is well-known from Peller \cite{P2003} that the commutator with the Hilbert transform $[b,H]$ is in the Schatten class $S^p$ 
if and only if $b$ is in the Besov space $B^p(\mathbb R)$ for $0<p<\infty$. 
While in the higher dimensional case, there is a ``cut-off'' phenomenon in the sense that 
when $p\leq n$, if $[b,R_j]$, $j=1,\ldots,n$, is in $S^p$ then $b$ must be a constant. 
At the critical index $p=n$,  $[b,R_j]$ is in the weak Schatten class $S^{n,\infty}$ 
if and only if $b$ is in the homogeneous Sobolev space $\dot{W}^{1,n}(\mathbb R^n)$. 
This further links to the $S^{n,\infty}$ estimate for the quantised derivative 
$\bar{d} b:=i\left[\operatorname{sgn}(\mathfrak{D}), 1 \otimes M_b\right]$, $i^2=1$,
of Alain Connes \cite[Chapter $\mathrm{IV}$]{C1994}, where $\mathfrak D$ is the $n-$dimensional Dirac operator, 
and $M_b$ is the multiplication operator: $M_bf(x) = b(x)f(x)$. Details of these notation will be stated in the last section. 
This has been intensively studied, see for example \cite{FLMSZ, FSZ2022, GG2017, LMSZ2017, RS1988, RS1989, JW1982,CST1994}. 
We note that in \cite{LMSZ2017,FSZ2022} they implemented a new approach to prove that for 
$b\in {\rm BMO}(\mathbb R^n)$, $\bar{d} b$ is in the $S^{n,\infty}$ 
if and only if $b$ is in  the homogeneous Sobolev space $\dot{W}^{1,n}(\mathbb R^n)$.

The Schatten class for  $[b,H]$ in the two weight setting was first studied in \cite{LLW2022}, 
where only the case $p=2$ was solved, and the questions for $p\not=2$ and for the higher dimensional case were raised.

In this paper, we address these questions by establishing the Schatten class $S^p$ characterisation for the Riesz transform commutator $[b,R_j]$, $j=1,\ldots,n$, 
in the two weight setting in $\mathbb R^n$. 
We also characterise the critical index in the two weight setting for both Schatten class $S^n$ and weak Schatten class $S^{n,\infty}$. 
The key is to introduce the suitable Besov space and sequence spaces 
associated with the given two weights and to use the approach of martingales. The idea and techniques in this paper give the answer to Schatten class $S^p$ for  $[b,H]$ in the two weight setting for $1<p<\infty$. Note that the case for $0<p\leq1$ is still not clear.

Throughout this paper, we assume $n\geq2$. 

We now introduce the Besov spaces associated with the weight $\nu\in A_2$.

\begin{definition}\label{def:Sequence}
Suppose  $\nu\in A_2$ and $0<p<\infty$. Let  $\mathcal D$ be any dyadic system in $\mathbb R^n$.
 For $b\in L^1_{loc}(\mathbb R^n)$, 
we say that $b\in \mathbf B_{\nu}^p(\mathbb R^n)$ if the following  sequence
$$ 	
    \{R_Q\}_{Q\in\mathcal D}
    :=  \left\{\frac{1}{\nu(20\sqrt{n}Q)}\int_{20\sqrt{n}Q} \left|b(x)-
    \langle b\rangle _{20\sqrt{n}Q}\right|dx\right\}_{Q\in\mathcal D}
$$
is in $\ell^{p}$.  Here $\langle b\rangle _{20\sqrt{n}Q}$ is the average of $b$ over $20\sqrt{n}Q$.   We set 
$ 
    \|b\|_{\mathbf B_{\nu}^p(\mathbb R^n)}
    =\left\|\left\{R_Q\right\}_{Q\in\mathcal D}\right\|_{ \ell^{p}}.
$
\end{definition}

The specific value of the parameter $20\sqrt{n}$ here is not essential, we choose it 
to ensure the subsequent arguments proceed smoothly. 
We will show in Section \ref{sec:Besov} that $\mathbf B_\nu^p(\mathbb R^n)$ is contained in ${\rm VMO}_\nu(\mathbb R^n)$.

The main result is the following, which addresses the question in \cite{LLW2022}.

\begin{theorem}\label{thm:main1}
Let $R_j$ be the  $j$th  Riesz transform on $\mathbb R^n$ with $n\geq 2$, $j=1,2,\ldots, n$.  
Suppose  $n< p<\infty$, $\lambda,\, \mu\in A_2$ and set $\nu=\mu^{1\over 2}\lambda^{-{1\over 2}}$. 
Suppose $b\in {\rm VMO}_\nu(\mathbb R^n)$. 
Then commutator $[b,R_j]$ belongs to $S^p \big (L_{\mu}^2(\mathbb R^n), L_{\lambda}^2(\mathbb R^n)\big )$ 
if and only if $b\in \mathbf B_{\nu}^p(\mathbb R^n)$. 
Moreover, we have
$$
    \|b\|_{\mathbf B_\nu^p(\mathbb R^n)}  
    \approx \left\|[b,R_j]\right\|_{S^p \big(L_{\mu}^2(\mathbb R^n), L_{\lambda}^2(\mathbb R^n)\big)},
$$
where the implicit constants depend on $p,$ $n,$ $[\lambda]_{A_2}$ and $[\mu]_{A_2}$.
\end{theorem}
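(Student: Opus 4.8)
The plan is to transfer the estimate to a dyadic (martingale) model. Since $n\ge2$ and $p>n$, we are always in the range $p>2$, which keeps the operator-ideal facts we need (diagonal domination and almost orthogonality in $S^p$) in their easy regime. By Hyt\"onen's dyadic representation theorem, $R_j$ is an absolutely convergent average, over random dyadic systems $\mathcal D$, of dyadic shift operators $\mathbb{S}^{s,t}_{\mathcal D}$ whose coefficients decay geometrically in the complexity $(s,t)$; so for the upper bound it suffices to control $\|[b,\mathbb{S}^{s,t}_{\mathcal D}]\|_{S^p(L^2_\mu,L^2_\lambda)}$ with constants summable against that decay. Expanding martingale differences, $[b,\mathbb{S}^{s,t}_{\mathcal D}]$ reduces, modulo acceptable errors, to a paraproduct $\Pi_b$, a dual paraproduct $\Pi^*_b$, and a remainder built from products of martingale increments of $b$ against the shift. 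We also record that $\mathbf B_\nu^p(\mathbb R^n)\subset{\rm VMO}_\nu(\mathbb R^n)$ (Section~\ref{sec:Besov}), so by the compactness characterisation of \cite{LL2022} the commutator is in any case compact; hence Schatten membership is meaningful and it is enough to handle the pieces above.

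\textbf{Sufficiency: $b\in\mathbf B_\nu^p\Rightarrow[b,R_j]\in S^p$.} Passing to the Haar systems adapted to $L^2_\mu$ and to $L^2_\lambda$, the paraproduct $\Pi_b$ is represented by a matrix that is triangular along the dyadic tree, with entries governed by the martingale increments $\langle b\rangle_{Q'}-\langle b\rangle_Q$ for $Q'\subsetneq Q$ and by the local $A_2$ balance of $\mu$ and $\lambda$. A Peller-type computation (see \cite{P2003}; legitimate since $p>2$) then bounds $\|\Pi_b\|_{S^p(L^2_\mu,L^2_\lambda)}$ by the $\ell^p$ norm of the sequence of $\nu$-normalised local oscillations of $b$, that is, by $\|b\|_{\mathbf B_\nu^p(\mathbb R^n)}$; the same applies to $\Pi^*_b$. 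The remainder is estimated directly in $S^p$, and here the geometric decay in $(s,t)$ must beat the polynomial-in-scales growth of the number of contributing terms, which is precisely where $p>n$ is used. Summing over $(s,t)$ and averaging over $\mathcal D$ yields $\|[b,R_j]\|_{S^p(L^2_\mu,L^2_\lambda)}\lesssim\|b\|_{\mathbf B_\nu^p(\mathbb R^n)}$.

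\textbf{Necessity: $[b,R_j]\in S^p\Rightarrow b\in\mathbf B_\nu^p$.} For any $p\ge1$, and any systems $\{e_k\}$ orthonormal in $L^2_\mu$ and $\{f_k\}$ orthonormal in $L^2_\lambda$, interpolating the cases $p=1,\infty$ of diagonal domination gives
$$
\Big(\sum_k\bigl|\langle[b,R_j]e_k,f_k\rangle_{L^2_\lambda}\bigr|^p\Big)^{1/p}\le\bigl\|[b,R_j]\bigr\|_{S^p(L^2_\mu,L^2_\lambda)}.
$$
Fix a dyadic grid $\mathcal D$. To each $Q\in\mathcal D$ attach a companion cube $\widehat Q$ (or a pair of them, one on each side of $Q$) with side length comparable to $\ell(Q)$, with ${\rm dist}(Q,\widehat Q)\approx\ell(Q)$, and with $Q\cup\widehat Q\subset20\sqrt{n}\,Q$; choose $e_Q$ supported on $Q$ and normalised in $L^2_\mu$, and $f_Q$ supported on $\widehat Q$ and normalised in $L^2_\lambda$. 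Since the Riesz kernel $\frac{x_j-y_j}{|x-y|^{n+1}}$ has a fixed sign and size $\approx\ell(Q)^{-n}$ for $x\in\widehat Q$, $y\in Q$, evaluating the commutator kernel shows one can arrange $|\langle[b,R_j]e_Q,f_Q\rangle_{L^2_\lambda}|\gtrsim R_Q$. Splitting $\mathcal D$ into $O_n(1)$ subfamilies on which the cubes $Q$, hence the supports of the $e_Q$ and of the $f_Q$, are pairwise disjoint, each subfamily provides genuine orthonormal systems; applying the displayed inequality on each and summing gives $\|b\|_{\mathbf B_\nu^p(\mathbb R^n)}\lesssim\|[b,R_j]\|_{S^p(L^2_\mu,L^2_\lambda)}$. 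Finally one checks that $\mathbf B_\nu^p$ and its norm are independent of the chosen dyadic grid up to the admissible constants.

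\textbf{Main obstacle.} The crux will be the necessity direction in the genuinely two-weight regime: the test functions must be normalised in $L^2_\mu$ on the domain side and in $L^2_\lambda$ on the target side, while the resulting lower bound for $\langle[b,R_j]e_Q,f_Q\rangle_{L^2_\lambda}$ must reproduce exactly the factor $\frac{1}{\nu(20\sqrt{n}\,Q)}\int_{20\sqrt{n}\,Q}|b-\langle b\rangle_{20\sqrt{n}\,Q}|\,dx$ in $R_Q$. Making this quantitative and uniform in $Q$ forces one to control the interplay of $\mu$, $\lambda$ and $\nu=\mu^{1/2}\lambda^{-1/2}$ through their $A_2$ characteristics. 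A milder, but still nontrivial, point on the sufficiency side is the remainder term, whose $S^p$ bound converges only because $p>n$; this is the analytic shadow of the cut-off phenomenon at the critical index $p=n$.
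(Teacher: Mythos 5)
Your overall architecture (dyadic representation of $R_j$, paraproduct decomposition for the upper bound, testing against localized bumps for the lower bound) is in the same spirit as the paper, but the necessity direction as you describe it has two concrete gaps. First, you cannot split a dyadic grid into $O_n(1)$ subfamilies of pairwise disjoint cubes: every point lies in an infinite nested tower of dyadic cubes, so disjointness fails across scales no matter how you colour the grid, and applying diagonal domination scale-by-scale and summing over the (infinitely many) scales destroys the estimate. This is exactly why the paper does not use honest orthonormal systems but the Rochberg--Semmes nearly weakly orthogonal machinery (Lemma~\ref{lem:NWO} and Lemma~\ref{eq-NWO}), which tolerates overlapping supports across scales through a maximal-function condition; the weighted bumps are shown to be NWO via the reverse H\"older inequality. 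Second, the per-cube claim $|\langle[b,R_j]e_Q,f_Q\rangle_{L^2_\lambda}|\gtrsim R_Q$ cannot be arranged: the left-hand side depends only on $b$ restricted to $Q\cup\widehat Q$, while $R_Q$ measures the oscillation of $b$ over the much larger cube $20\sqrt{n}\,Q$ (take $b$ constant on $Q\cup\widehat Q$ but oscillating elsewhere in $20\sqrt{n}\,Q$ to get left side $0$, right side positive); moreover even on $Q\times\widehat Q$ the factor $b(x)-b(y)$ changes sign, so a single sign-modulated pair only captures a difference of averages, not a mean oscillation. The paper resolves this by first proving an equivalent form of the Besov norm built from a finite spanning family of grandchild-indicator differences $g_{Q,l}$ chosen so that $K_j$ has fixed sign and size $\approx|Q|^{-1}$ on the relevant product of grandchildren (Proposition~\ref{p:BesovRiesz}), combined with the martingale-difference characterization and the dyadic-intersection theorem (Theorem~\ref{thm:Besov-Intersect}); only then is the commutator tested, and the sum is closed with Lemma~\ref{eq-NWO}. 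Without an analogue of this reduction, your lower bound does not reproduce $\|b\|_{\mathbf B_\nu^p}$.

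On the sufficiency side the plan is closer to the paper (which uses the Petermichl--Treil--Volberg averaging rather than the full Hyt\"onen representation, a harmless difference), but the step you label a ``Peller-type computation'' is precisely the two-weight content that needs an argument: Peller's Hankel/Besov computation is unweighted, and the paper instead factors each paraproduct as $\sum_Q c_Q\langle\cdot,e_Q\rangle f_Q$ with weight-renormalized NWO families (Proposition~\ref{prop:para-Besov}), verified NWO through reverse H\"older, and invokes the Rochberg--Semmes upper bound. Also, your assertion that $p>n$ is what makes the remainder sum converge is a misattribution: since $p>n\geq 2$ the triangle inequality in $S^p$ handles the geometric complexity decay for any fixed $p>1$, and in the paper the hypothesis $p>n$ plays no role in the dyadic estimates; it matters only because the continuous space $\mathbf B_\nu^p(\mathbb R^n)$ degenerates to constants for $p\leq n$ (Theorem~\ref{thm:main2}).
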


\smallskip

The proof of Theorem~\ref{thm:main1} proceeds by translating the inequality into a form more symmetric with respect to the weights, 
and then relying upon familiar (but in this context, new) dyadic methods.  
Using the approach of Petermichl, Treil and Volberg in \cite{PTV}, we can represent the Riesz transforms as dyadic shifts. 
The commutator is then seen to be a sum of paraproduct type operators. 
The dyadic formalism allows easy access to the important method of 
nearly weakly orthogonal sequences of Rochberg and Semmes \cite{RS1989}. 
The dyadic methods give the continuous result, as the  Besov space is the intersection of a finite number of dyadic Besov spaces.   We note that following the same idea and techniques, the characterization in Theorem~\ref{thm:main1} also holds for the Schatten--Lorentz class $S^{p,q} \big (L_{\mu}^2(\mathbb R^n), L_{\lambda}^2(\mathbb R^n)\big )$, $n<p<\infty, \, 0< q\leq\infty$, via assuming the sequences $ \{R_Q\}_{Q\in\mathcal D}$ defined in Definition \ref{def:Sequence} to be in $\ell^{p,q}$.

The tools we need include a dyadic  characterization for the Besov space $\mathbf B_\nu^p(\mathbb R^n)$, 
that is, $\mathbf B_\nu^p(\mathbb R^n)$ coincides with the intersection of several `translated' copies of 
the dyadic weighted Besov space $\mathbf B_{\nu}^p(\mathbb{R}^n, \mathcal D)$ 
(see Theorem \ref{thm:Besov-Intersect} in Section \ref{sec:Besov});
and the Schatten class characterization of the paraproduct $\Pi_b^{\mathcal D}$ with symbol $b$ defined as
\begin{equation}\label{e:Paraproduct}
    \Pi_b^{\mathcal D}
    := \sum_{Q\in\mathcal{D}:\, \epsilon\not\equiv 1} \langle b, h_Q^\epsilon\rangle 
    h_Q^\epsilon\otimes \frac{\mathsf{1}_Q}{\left\vert Q\right\vert}
\end{equation}
and its adjoint (see Proposition \ref{prop:para-Besov} in Section \ref{sec:thm1}):
$$
    \left\|\Pi_b^{\mathcal D} \right\|_{S^p\big(L_{\mu}^2(\mathbb R^n), L_{\lambda}^2 (\mathbb R^n)\big)}  
    \approx  \|b\|_{\mathbf B_\nu^p(\mathbb R^n, \mathcal D)}.
$$

It is natural to further ask the question on the critical index $p=n$ as this is an important fact 
due to the work of Janson--Wolff \cite{JW1982} (see also \cite{F2022,FSZ2022}). 
We give a criterion in this two weight setting, {i.e. $\mu,\ \lambda\in A_2$, and $\nu =\mu^{1\over 2}\lambda^{-{1\over 2}}$}.  
By using these weights we recover the classical phenomenon in \cite{JW1982} when the $A_2$ weight $\nu$ is the Lebesgue measure.

\begin{theorem}\label{thm:main2}
Let $R_j$ be the  $j$th  Riesz transform on $\mathbb R^n$ with $n\geq 2$, $j=1,2,\ldots, n$.   
Suppose  $\lambda,\, \mu\in A_2$ and set $\nu=\mu^{1\over 2}\lambda^{-{1\over 2}}$. 
Suppose $b\in {\rm VMO}_\nu(\mathbb R^n)$ and $0<p\leq n$.
If $[b,R_j]$ belongs to $S^p \big (L_{\mu}^2(\mathbb R^n), L_{\lambda}^2(\mathbb R^n)\big )$, 
then  $b$ must be a constant almost everywhere.  
\end{theorem}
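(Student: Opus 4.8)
The plan is to reduce the two-weight statement to a lower bound on the Schatten norm of the commutator that is already known (or easily accessible) in the unweighted setting, and then exploit the ``cut-off'' rigidity of Janson--Wolff~\cite{JW1982}. The first step is the same symmetrization used for Theorem~\ref{thm:main1}: writing $\mu,\lambda\in A_2$ and $\nu=\mu^{1/2}\lambda^{-1/2}$, one conjugates $[b,R_j]\colon L^2_\mu\to L^2_\lambda$ by the isometries $f\mapsto \mu^{1/2}f$ and $g\mapsto \lambda^{1/2}g$ to obtain an operator on $L^2(\mathbb R^n)$ whose Schatten norm equals $\|[b,R_j]\|_{S^p(L^2_\mu,L^2_\lambda)}$. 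Using the Petermichl--Treil--Volberg representation~\cite{PTV} of $R_j$ as an average of dyadic shifts, the commutator decomposes into paraproduct-type pieces; by Proposition~\ref{prop:para-Besov} and the dyadic-to-continuous passage (Theorem~\ref{thm:Besov-Intersect}), membership of $[b,R_j]$ in $S^p$ for \emph{some} $j$ forces a lower bound of the form $\|b\|_{\mathbf B^p_\nu(\mathbb R^n)}\lesssim \|[b,R_j]\|_{S^p}$ — but now I only need a weaker consequence.

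The second step is to localize. Fix any cube $Q_0$. Since $\nu\in A_2$, on $Q_0$ the weight $\nu$ is comparable (in the $A_\infty$ sense) to a constant multiple of Lebesgue measure up to the $A_2$ characteristic, so that on the scale of $Q_0$ the weighted averages $\nu(20\sqrt n Q)^{-1}\int_{20\sqrt n Q}|b-\langle b\rangle|\,dx$ are comparable to the unweighted ones $|20\sqrt n Q|^{-1}\int_{20\sqrt n Q}|b-\langle b\rangle|\,dx$. Hence finiteness of the Schatten norm of $[b,R_j]$, combined with the lower bound from Step~1, shows that the restriction $b|_{Q_0}$ lies in the \emph{unweighted} Schatten-type Besov class at index $p$ on that cube, with norm controlled by $[\nu]_{A_2}^{1/2}\|[b,R_j]\|_{S^p}$. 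Equivalently, $[b\mathsf 1_{Q_0},R_j]$ (or the localized commutator) lies in $S^p(L^2,L^2)$ for every $j$, with a uniform-in-$Q_0$ bound on compact subsets.

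The third step invokes the classical Janson--Wolff cut-off: for $0<p\le n$, if $[b,R_j]\in S^p(L^2(\mathbb R^n))$ for every $j=1,\dots,n$ then $b$ is constant. (This is the unweighted $p\le n$ rigidity, see~\cite{JW1982,FSZ2022}.) Applying it on each cube $Q_0$ — or directly to the globally symmetrized operator, after checking that conjugation by the $A_2$ weights does not destroy membership in $S^p$, which is immediate since it is an isometry — yields that $b$ is constant a.e. I would present this by first stating the unweighted result as a cited lemma, then showing the symmetrized operator $\lambda^{1/2}[b,R_j]\mu^{-1/2}$ differs from the unweighted commutator $[b,R_j]$ only by the harmless weight conjugation, so that $[b,R_j]\in S^p(L^2_\mu,L^2_\lambda)$ if and only if a genuinely unweighted commutator-type object lies in $S^p(L^2)$ on every compact set.

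The main obstacle is the passage from the two-weight Schatten bound to an \emph{unweighted} Schatten statement to which Janson--Wolff applies: the conjugating multipliers $\mu^{\pm1/2},\lambda^{\pm1/2}$ are isometries on $L^2$ but are \emph{not} bounded multipliers in a way that interacts cleanly with the Calder\'on--Zygmund structure of $R_j$, so one cannot simply say ``the weighted commutator equals the unweighted one up to bounded factors.'' The honest route is through the dyadic paraproduct model of Step~1, proving a \emph{lower} bound $\|b\|_{\mathbf B^p_\nu}\lesssim\|[b,R_j]\|_{S^p}$ that is valid for all $0<p<\infty$ (the upper bound is where the $p>n$ restriction enters, and is not needed here), and then showing that the sequence $\{R_Q\}_{Q\in\mathcal D}\in\ell^p$ with $p\le n$ — where $R_Q$ are the weighted oscillations — forces, via an $A_2$-comparison on each fixed scale together with the summability exponent $p\le n$ being subcritical relative to the $n$-dimensional ``$\ell^n$ cut-off'' of Janson--Wolff, that every oscillation $R_Q$ vanishes, i.e. $b$ is locally constant, hence constant. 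Making this $\ell^p$-summability-forces-vanishing step precise, in a way that mirrors~\cite{JW1982} but with the weight $\nu$ present, is the technical heart; everything else is bookkeeping that has already been set up for Theorem~\ref{thm:main1}.
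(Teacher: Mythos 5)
Your Step~1 is the same starting point as the paper: the NWO lower bound of Proposition~\ref{prop:Besov-2} (inequality \eqref{e:needagain}) is indeed valid at $p=n$, and the case $p<n$ reduces to $p=n$ via $S^p\subset S^n$. But from there the proposal has a genuine gap. The localization in your Step~2 does not work: an $A_2$ weight is \emph{not} comparable to a constant multiple of Lebesgue measure on a fixed cube $Q_0$ uniformly over all subcubes and scales (take $\nu(x)=|x|^{\alpha}$), so you cannot replace the weighted oscillations $\nu(20\sqrt n Q)^{-1}\int|b-\langle b\rangle|$ by unweighted ones with constants depending only on $[\nu]_{A_2}$, and the conjugated operator $\lambda^{1/2}[b,R_j]\mu^{-1/2}$ is not an unweighted commutator, so the Janson--Wolff theorem cannot be invoked after conjugation. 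You acknowledge this obstacle, but the fallback you then sketch — ``$\{R_Q\}\in\ell^p$ with $p\le n$ forces every $R_Q$ to vanish'' — is false as a statement about the sequence alone (any nonzero $\ell^p$ sequence of oscillations is consistent with a nonconstant $b$ unless one exploits the specific scaling of oscillations of a differentiable function), and you explicitly leave ``making this precise'' open. That step is exactly the content of the theorem, so the proposal stops short of a proof.

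What the paper actually does at this point, and what is missing from your outline, is: (i) reduce to smooth symbols by mollifying, $b_\epsilon=\psi_\epsilon*b$, and controlling $\mathcal T(b_\epsilon)$ by $\sup_{|h|<1}\mathcal T(\tau_h b)$, where the translated quantity is still dominated by $\|[b,R_j]\|_{S^n(L^2_\mu,L^2_\lambda)}$ because the Riesz kernel is translation invariant and $[\tau_{-h}\mu]_{A_2}=[\mu]_{A_2}$, $[\tau_{-h}\lambda]_{A_2}=[\lambda]_{A_2}$; (ii) for a nonconstant smooth symbol, pick $x_0$ with $\nabla b(x_0)\neq 0$ which is also a Lebesgue point of $\nu^{-1}$ with $\nu^{-1}(x_0)>0$, show via the mean value theorem that $|E_{k+2}b-E_kb|\gtrsim \ell(Q)$ on at least one grandchild of every small dyadic $Q$ near $x_0$, and hence that the weighted Besov sum at exponent $n$ diverges: $\sum_{Q\subset Q_0}\bigl(\ell(Q)\,\nu^{-1}(Q)/|Q|\bigr)^n\gtrsim \nu^{-1}(x_0)^n\sum_{k\ge1}|Q_0|=\infty$, contradicting the finite Schatten norm. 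This scaling computation in the weighted setting (note the exponent $n$ exactly cancels the count $2^{kn}$ of cubes at scale $2^{-k}$) is the heart of the proof and replaces the appeal to the unweighted Janson--Wolff result; your proposal never supplies it. A minor additional overreach: your claim that the lower bound $\|b\|_{\mathbf B^p_\nu}\lesssim\|[b,R_j]\|_{S^p}$ holds for all $0<p<\infty$ is not justified, since the Rochberg--Semmes Lemma~\ref{eq-NWO} requires $p>1$; this is harmless here only because one can pass to $p=n\ge2$ first.
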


Next, it is natural to explore the weak Schatten class estimate for the critical index $p=n$.  
We now introduce the corresponding new function space of weighted oscillation. 

\begin{definition}\label{def:Weight-Sobolev}
Suppose  $\nu\in A_2$.  For $b\in L^1_{loc}(\mathbb R^n)$, 
we say that $b\in \mathcal W_{\nu}(\mathbb R^n)$ if the following  sequence
$$ 	
    \{R_Q\}_{Q\in\mathcal D}
    :=  \left\{\frac{1}{\nu(20\sqrt{n}Q)}\int_{20\sqrt{n}Q} \left|b(x)-
    \langle b\rangle _{20\sqrt{n}Q}\right|dx\right\}_{Q\in\mathcal D}
$$
is in $\ell^{n,\infty}$ {\rm (}a sequence $\{a_k\}$ is in $\ell^{n,\infty}$ if $\sup_{k} \big\{k^{1/n}a_k^{*}\big\}<+\infty$ 
with $\big\{a_k^*\big\}$ the non-increasing rearrangement of $\{a_k\}${\rm )}. 
Here $\langle b\rangle _{20\sqrt{n}Q}$ is the average of $b$ over $20\sqrt{n}Q$.  We set 
$$ 
    \|b\|_{\mathcal W_{\nu}(\mathbb R^n)}
    =\left\| \left\{R_Q\right\}_{Q\in\mathcal D}\right\|_{ \ell^{n,\infty}}.
$$
\end{definition}

From this definition we see directly that $\mathcal W_{\nu}(\mathbb R^n)\subset {\rm BMO}_{\nu}(\mathbb R^n)$ 
(explained at the beginning of Section \ref{sec:6}). 
In fact, we can further show that $\mathcal W_{\nu}(\mathbb R^n)\subset {\rm VMO}_{\nu}(\mathbb R^n)$ 
(hence $[b,R_j]$ is compact when $b\in\mathcal W_{\nu}(\mathbb R^n)$). We will prove this in Section \ref{sec:6}. 
We also note that when $\mu$ and $\lambda$ are constants, $\mathcal W_{\nu}(\mathbb R^n)$ will become 
the standard homogeneous Sobolev space $\dot W^{1,n}(\mathbb R^n)$ (see for example \cite{F2022}).

To continue, we  establish the following equivalent characterization of $\mathcal W_{\nu}(\mathbb R^n)$ with 
$\nu=\mu^{1\over 2}\lambda^{-{1\over 2}}$, where $\lambda,\, \mu\in A_2$. 
This is crucial to the characterization for the weak Schatten class $S^{n,\infty}$.

\begin{theorem}\label{thm:Weight-Sobolev-revised}
Suppose  $\lambda,\, \mu\in A_2$ and set $\nu=\mu^{1\over 2}\lambda^{-{1\over 2}}$.  
$\mathcal W_{\nu}(\mathbb R^n)$ can be characterized equivalently as follows:
\begin{align*}
    \|b\|_{\mathcal W_{\nu}(\mathbb R^n)} 
    &\approx\bigg \|  \bigg\{\bigg[{1\over \mu(20\sqrt{n}Q)} \int_{20\sqrt{n}Q} 
    \left|b(x)- \langle b\rangle_{20\sqrt{n}Q}\right|^{2} \lambda(x) \,dx\bigg]^{1\over 2}
    \bigg\}_{Q\in\mathcal D}  \bigg \|_{ \ell^{n,\infty}} \\
    &\approx \bigg \| \bigg\{ \bigg[\frac{1}{\lambda^{-1}(20\sqrt{n}Q)} 
    \int_{20\sqrt{n}Q} \left|b-\langle b\rangle_{20\sqrt{n}Q}\right|^{2} 
    \mu^{-1} (x) \, dx \bigg]^{1\over {2}} \bigg\}_{Q\in\mathcal D}\bigg \|_{ \ell^{n,\infty}},
\end{align*}
where the implicit constants are independent of $b$.
\end{theorem}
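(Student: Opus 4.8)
Write $P:=20\sqrt n\,Q$, $g_P:=b-\langle b\rangle_P$, and denote the three sequences in the statement by $\{R_Q\}$, $\{S_Q\}$, $\{T_Q\}$, so that $S_Q=\big(\mu(P)^{-1}\int_P|g_P|^2\lambda\big)^{1/2}$ and $T_Q=\big(\lambda^{-1}(P)^{-1}\int_P|g_P|^2\mu^{-1}\big)^{1/2}$. The plan is to prove the single equivalence $\|\{R_Q\}\|_{\ell^{n,\infty}}\approx\|\{S_Q\}\|_{\ell^{n,\infty}}$ \emph{for every} pair $\mu,\lambda\in A_2$, with constants depending only on $n,[\mu]_{A_2},[\lambda]_{A_2}$; this already suffices, since applying it to the pair $(\lambda^{-1},\mu^{-1})$ -- which is again in $A_2\times A_2$, has the same $\nu=\mu^{1/2}\lambda^{-1/2}$ and hence the same $\{R_Q\}$, and whose associated $S$-sequence is exactly the original $\{T_Q\}$ -- yields $\|\{R_Q\}\|_{\ell^{n,\infty}}\approx\|\{T_Q\}\|_{\ell^{n,\infty}}$ as well.

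The easy inequality $\|\{R_Q\}\|_{\ell^{n,\infty}}\lesssim\|\{S_Q\}\|_{\ell^{n,\infty}}$ I would prove cube by cube. By Cauchy--Schwarz $\int_P|g_P|\le\big(\int_P|g_P|^2\lambda\big)^{1/2}\lambda^{-1}(P)^{1/2}$, so $R_Q\le\nu(P)^{-1}\mu(P)^{1/2}\lambda^{-1}(P)^{1/2}\,S_Q$, and it remains to check $\mu(P)\lambda^{-1}(P)\le[\mu]_{A_2}[\lambda]_{A_2}\,\nu(P)^2$. For this, writing $\nu^{-1}=\mu^{-1/2}\lambda^{1/2}$, Cauchy--Schwarz gives $|P|^2\le\nu(P)\nu^{-1}(P)$ and $\nu^{-1}(P)^2\le\mu^{-1}(P)\lambda(P)$, while the $A_2$ conditions give $\mu^{-1}(P)\le[\mu]_{A_2}|P|^2/\mu(P)$ and $\lambda(P)\le[\lambda]_{A_2}|P|^2/\lambda^{-1}(P)$; chaining these four estimates gives $\nu(P)^2\ge|P|^4/\nu^{-1}(P)^2\ge\mu(P)\lambda^{-1}(P)/([\mu]_{A_2}[\lambda]_{A_2})$.

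The reverse inequality $\|\{S_Q\}\|_{\ell^{n,\infty}}\lesssim\|\{R_Q\}\|_{\ell^{n,\infty}}$ is the heart of the matter, and it \emph{cannot} be done cube by cube: a cube on which $b$ is nearly constant but whose small oscillation sits where $\lambda/\mu$ is large has $S_Q\gg R_Q$, so one has to exploit the whole sequence. I would proceed in three steps. (i) Using the finitely many shifted dyadic grids already behind Theorem~\ref{thm:Besov-Intersect}, pass to one dyadic system and, for each $Q$, to a dyadic cube $P^{*}\supseteq P$ with $|P^{*}|\approx|P|$; combining $P\subseteq P^{*}$, the reverse doubling of $\mu$, and $|\langle b\rangle_P-\langle b\rangle_{\lambda,P^{*}}|\lesssim_{[\lambda]_{A_2}}\big(\lambda(P^{*})^{-1}\int_{P^{*}}|b-\langle b\rangle_{\lambda,P^{*}}|^2\lambda\big)^{1/2}$ reduces $S_Q^2$ to $\mu(P^{*})^{-1}\int_{P^{*}}|b-\langle b\rangle_{\lambda,P^{*}}|^2\lambda$. (ii) Expand $b-\langle b\rangle_{\lambda,P^{*}}$ in the $\lambda$-martingale differences on $P^{*}$; their $L^2(\lambda)$-orthogonality telescopes this into the positive sum $\mu(P^{*})^{-1}\sum_{Q'\subseteq P^{*}}\big(\lambda(Q')^{-1}\int_{Q'}|b-\langle b\rangle_{\lambda,Q'}|\lambda\big)^2\lambda(Q')$. (iii) Convert each $\lambda$-weighted oscillation over $Q'$ into the $\nu$-normalised $L^1$-oscillation $R_{Q'}$ over $20\sqrt n\,Q'$, using the reverse Hölder inequalities for $\mu,\lambda,\nu\in A_\infty$, the weighted/unweighted average comparison, the identities $\mu=\nu^2\lambda$ and $\lambda^{-1}=\nu^2\mu^{-1}$ and the bound $\lambda(Q')\nu(Q')^2\approx\mu(Q')|Q'|^2$ from the easy step, together with the Janson--Wolff-type robustness of the oscillation description under change of the inner Lebesgue exponent \cite{JW1982}. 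This yields a pointwise estimate $S_Q\lesssim(\mathcal A\{R_{Q'}\})_Q$ for a positive dyadic Carleson/square-function-type operator $\mathcal A$ on sequences, and the proof is then completed by the mapping property $\mathcal A\colon\ell^{n,\infty}\to\ell^{n,\infty}$.

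The principal obstacle is step (iii) together with the boundedness of $\mathcal A$: the two weights must be followed through the telescoping carefully enough that the dyadic scales enter with exactly the homogeneity making $\mathcal A$ of weak type $(n,n)$ -- this is the balance that forces the critical exponent $p=n$, and when $\mu=\lambda$ (so $\nu\equiv1$ and $\mathcal W_\nu=\dot W^{1,n}$) the whole scheme collapses to the classical robustness of the oscillation characterisation of $\dot W^{1,n}$ \cite{JW1982,F2022}. With $\|\{R_Q\}\|_{\ell^{n,\infty}}\approx\|\{S_Q\}\|_{\ell^{n,\infty}}$ in hand for all admissible pairs, the equivalence with the $T$-sequence follows from the substitution $(\mu,\lambda)\mapsto(\lambda^{-1},\mu^{-1})$ noted at the start.
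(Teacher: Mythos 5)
Your easy direction and the symmetry reduction $(\mu,\lambda)\mapsto(\lambda^{-1},\mu^{-1})$ are correct (and the paper's easy direction is likewise just H\"older plus $A_2$), and steps (i)--(ii) of your hard direction are sound: the replacement of $\langle b\rangle_P$ by the $\lambda$-average costs only an $A_2$ constant, and the $L^2(\lambda)$-martingale expansion does telescope into a positive sum of localized $\lambda$-weighted oscillations. The genuine gap is exactly where you flag it, in step (iii), and it is not a technical loose end but the crux. After the martingale expansion you are holding quantities of the form $\lambda(Q')^{-1}\int_{Q'}|b-\langle b\rangle_{\lambda,Q'}|\,\lambda\,dx$, and you must trade the interior weight $\lambda$ for Lebesgue measure to reach $R_{Q'}$. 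Cube by cube this is impossible (as you yourself observe for the reverse inequality as a whole); removing $\lambda$ via reverse H\"older raises the inner exponent on $|b-\langle b\rangle|$, and bringing it back down to exponent $1$ requires a sequence-level self-improvement of weighted oscillations. The Janson--Wolff/Frank robustness you invoke is an \emph{unweighted} statement; its two-weight analogue is of the same nature and difficulty as the theorem you are proving, so at this point the argument is circular (or at least rests on an unproved lemma that carries all the weight). Consequently the operator $\mathcal A$ is never actually exhibited, and its $\ell^{n,\infty}\to\ell^{n,\infty}$ boundedness cannot be checked.

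The paper sidesteps this obstruction by a different device: it mollifies, setting $F(x,t)=\varphi_t*b(x)$, and uses that $s|\nabla F(y,s)|$ at scale $\approx\ell(Q)$ is pointwise dominated by the \emph{unweighted} local oscillation, i.e.\ by $\ell(Q)^{-n}\int_{20\sqrt n Q}|b-\langle b\rangle_{20\sqrt n Q}|\,dx$, because the convolution is against Lebesgue measure. Writing $b-\langle b\rangle_{20\sqrt n Q}$ through $F(\cdot,\ell(Q))$ and $\int_0^{\ell(Q)}|\partial_tF|\,dt$ and telescoping over dyadic scales (with a Cauchy--Schwarz in the scale parameter carrying logarithmic weights), the weights enter only through explicit positive dyadic operators such as
$\widetilde{\mathsf M}(\mathscr R)(Q)=\bigl[\mu(Q)^{-1}\sum_{P:\ \ell(P)\le\ell(Q),\,P\cap 20\sqrt nQ\ne\emptyset}\mu(P)\bigl(\log_2\tfrac{\ell(Q)}{\ell(P)}+1\bigr)^2R_P^2\bigr]^{1/2}$,
whose $\ell^p$-boundedness for all $p\ge1$ follows from doubling of $\mu$ alone, and whose $\ell^{n,\infty}$-boundedness then follows by interpolation. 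If you want to salvage your martingale scheme, you would need to prove the missing comparison between $\lambda$-weighted and unweighted oscillation sequences in $\ell^{n,\infty}$ (a two-weight Janson--Wolff lemma), or else replace step (iii) by the mollification argument, at which point you have essentially reconstructed the paper's proof of Lemma \ref{lem aux Sobolev new}.
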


Then we have the following characterization for the weak Schatten class $S^{n,\infty}$.

\begin{theorem}\label{thm:main3}
 Let $R_j$ be the  $j$th  Riesz transform on $\mathbb R^n$ with $n\geq 2$, $j=1,2,\ldots, n$. 
 Suppose  $\lambda,\, \mu\in A_2$ and set $\nu=\mu^{1\over 2}\lambda^{-{1\over 2}}$. 
 Suppose $b\in L^1_{loc}(\mathbb R^n)$. 
 Then $[b,R_j]$ belongs to $S^{n,\infty} \big (L_{\mu}^2(\mathbb R^n), L_{\lambda}^2(\mathbb R^n)\big )$ 
 if and only if  $b\in \mathcal W_{\nu}(\mathbb R^n)$, with 
$$  
    \|b\|_{\mathcal W_{\nu}(\mathbb R^n)}
    \approx  \|[b,R_j]\|_{S^{n,\infty} \big (L_{\mu}^2(\mathbb R^n), L_{\lambda}^2(\mathbb R^n)\big )}, 
$$
where the implicit constants depends on $p,$ $n,$ $[\lambda]_{A_2}$ and $[\mu]_{A_2}$.
\end{theorem}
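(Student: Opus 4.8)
The plan is to run the architecture of the proof of Theorem~\ref{thm:main1}, with the Lorentz sequence space $\ell^{n,\infty}$ replacing $\ell^p$ and with $\mathcal W_\nu$ replacing $\mathbf B_\nu^p$. Three structural facts drive the reduction. First, since $n\ge 2$ the weak Schatten ideal $S^{n,\infty}$ is equivalent to a Banach operator ideal, hence stable under finite sums up to constants, so I may work one dyadic grid at a time. Second, exactly as in Theorem~\ref{thm:Besov-Intersect}, the space $\mathcal W_\nu(\mathbb R^n)$ coincides with the intersection of the finitely many translated dyadic spaces $\mathcal W_\nu(\mathbb R^n,\mathcal D)$ (whose defining sequences run over $Q\in\mathcal D$ with $20\sqrt n Q$ replaced by $Q$); the covering and averaging argument is the same as in the Besov case, since $\ell^{n,\infty}$ is rearrangement invariant and one uses only the lattice structure of $\mathcal D$ and the doubling of $\nu$. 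Third, by the Petermichl--Treil--Volberg representation $R_j$ is an average of dyadic shifts of bounded complexity, so that for $b\in\mathcal W_\nu\subset\mathrm{VMO}_\nu$ (the inclusion being proved in Section~\ref{sec:6}), where $[b,R_j]$ is a bounded map $L^2_\mu\to L^2_\lambda$ by Bloom's theorem, $[b,R_j]$ becomes a finite linear combination of generalized paraproducts; the principal terms are $\Pi_b^{\mathcal D}$ of \eqref{e:Paraproduct} and its adjoint, the rest being shifted variants of the same type. Thus the forward (``if'') direction of Theorem~\ref{thm:main3} follows once we establish the dyadic weak-Schatten paraproduct bound
\[
    \bigl\|\Pi_b^{\mathcal D}\bigr\|_{S^{n,\infty}(L^2_\mu,L^2_\lambda)}
    \lesssim \|b\|_{\mathcal W_\nu(\mathbb R^n,\mathcal D)}
\]
(the upper half of the $S^{n,\infty}$ analogue of Proposition~\ref{prop:para-Besov}), while the reverse (``only if'') direction will be obtained by testing $[b,R_j]$ directly.

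For the paraproduct bound I would conjugate by the weights, writing $M_{\lambda^{1/2}}\Pi_b^{\mathcal D}M_{\mu^{-1/2}}$ on $L^2(\mathbb R^n)$ as $\sum_Q\langle b,h_Q^\epsilon\rangle\,e_Q\otimes f_Q$, where, after $L^2$-normalization, $e_Q$ is built from $\lambda^{1/2}h_Q^\epsilon$ and $f_Q$ from $\mu^{-1/2}\mathsf 1_Q$. Since $\mu,\lambda\in A_2$, the families $\{e_Q\}$ and $\{f_Q\}$ are nearly weakly orthogonal in the sense of Rochberg--Semmes \cite{RS1989}. The key input is the weak-type bound: if $\{e_Q\},\{f_Q\}$ are nearly weakly orthogonal, then
\[
    \Bigl\|\sum_Q a_Q\,e_Q\otimes f_Q\Bigr\|_{S^{n,\infty}}\lesssim\|\{a_Q\}\|_{\ell^{n,\infty}}.
\]
I would deduce this from the $S^p$ estimate already used for Theorem~\ref{thm:main1}, either by real interpolation between operator ideals or, more directly, by splitting $\mathcal D$ into the level sets $\mathcal D_k=\{Q:a_Q\sim 2^{-k}\}$, applying the $S^p$ bound on each $\mathcal D_k$, and summing the resulting geometric series, which converges because $n>1$. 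Inserting for $a_Q$ the normalized Haar coefficient and invoking the weighted reformulations of the oscillation sequence from Theorem~\ref{thm:Weight-Sobolev-revised} identifies $\|\{a_Q\}\|_{\ell^{n,\infty}}$ with $\|b\|_{\mathcal W_\nu(\mathbb R^n,\mathcal D)}$, which gives the displayed paraproduct bound; the matching lower bound (needed for the $S^{n,\infty}$ analogue of Proposition~\ref{prop:para-Besov}, though not for the forward direction above) is obtained by testing $\Pi_b^{\mathcal D}$ on functions adapted to the dyadic cubes.

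For the reverse direction of Theorem~\ref{thm:main3}, observe first that $[b,R_j]\in S^{n,\infty}$ forces $[b,R_j]$ to be compact, hence $b\in\mathrm{VMO}_\nu(\mathbb R^n)$ by the two-weight compactness theorem of \cite{LL2022}, so the dyadic calculus above is available. I would then test against adapted bumps: for each $Q$ in a suitably sparse subfamily of $\mathcal D$, choose $\varphi_Q$ supported in $Q$ and $\psi_Q$ supported in a fixed translate $Q'$ of $Q$ disjoint from $Q$, with $R_j\varphi_Q$ of a definite sign and of size $\gtrsim 1$ on $Q'$; normalizing $\varphi_Q$ in $L^2_\mu$ and $\psi_Q$ in $L^2_{\lambda^{-1}}$, the pairings $\int_{\mathbb R^n}[b,R_j]\varphi_Q\cdot\psi_Q\,dx$ are bounded below, up to constants, by the $\mu,\lambda$-weighted oscillation of $b$ over $Q$, i.e.\ by the entry $R_Q$. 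Since within a sparse family the $\varphi_Q$ (respectively $\psi_Q$) have bounded overlap, and since $\|T\|_{S^{n,\infty}}$ dominates the $\ell^{n,\infty}$ quasinorm of $\{\langle Tu_k,v_k\rangle\}$ for systems of bounded overlap, summing over such a family and then over the finitely many grids $\mathcal D$ yields $\|b\|_{\mathcal W_\nu(\mathbb R^n)}\lesssim\|[b,R_j]\|_{S^{n,\infty}(L^2_\mu,L^2_\lambda)}$.

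The main obstacle is the weak-Schatten bookkeeping. Unlike $S^p$ for $p<\infty$, one cannot sum freely in $S^{n,\infty}$, so both the nearly-weakly-orthogonal upper bound and the testing lower bound must be organized around the level-set structure of $\ell^{n,\infty}$; it is precisely the hypothesis $n\ge 2>1$ that makes $S^{n,\infty}$ a Banach ideal and makes the geometric series in the level-set decomposition converge. A second delicate point is confirming that the shifted-variant paraproduct terms produced by the Petermichl--Treil--Volberg representation are negligible in $S^{n,\infty}$: this is again handled by the nearly-weakly-orthogonal estimate, now exploiting the additional decay supplied by the shift structure, together with the fact that the associated symbol sequences have strictly better $\ell^{n,\infty}$ behaviour than $\{R_Q\}_{Q\in\mathcal D}$ itself.
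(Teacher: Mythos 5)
Your necessity argument has a genuine gap at its central step: the claim that, after normalizing $\varphi_Q$ in $L^2_\mu$ and $\psi_Q$ in $L^2_{\lambda^{-1}}$, the pairing $\langle [b,R_j]\varphi_Q,\psi_Q\rangle$ is bounded below by the oscillation entry $R_Q$. With bumps chosen independently of $b$ (indicators or smooth bumps on $Q$ and on a disjoint translate $Q'$, with $K_j$ of fixed sign between them), the double integral $\iint (b(x)-b(y))K_j(x-y)\varphi_Q(y)\psi_Q(x)\,dy\,dx$ only detects a \emph{difference of weighted averages} of $b$ over $Q$ and $Q'$, not the mean oscillation; it can vanish while $R_Q$ is large. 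Extracting the full oscillation requires making the test objects depend on $b$: the paper introduces a median value $\mathsf m_b(\widehat Q)$ and sign functions $\varepsilon_Q,\eta_{\widehat Q}\in\{-1,1\}$ (equivalently, restricts to the sets where $b\gtrless\mathsf m_b(\widehat Q)$), builds the auxiliary operators $L_Q$ from the \emph{inverse} kernel $K_j^{-1}$ on $20\sqrt n Q\times 20\sqrt n\widehat Q$, bounds ${\rm Trace}(\lambda^{1/2}[b,R_j]L_Q\mu^{-1/2})\gtrsim \nu(20\sqrt nQ)^{-1}\int_{20\sqrt nQ}|b-\mathsf m_b(\widehat Q)|$, and then recovers the $\ell^{n,\infty}$ norm by trace duality: it tests against $L=\sum_Q a_QL_Q$ with $\{a_Q\}\in\ell^{n/(n-1),1}$ arbitrary, shows $\|\mu^{1/2}L\mu^{-1/2}\|_{S^{n/(n-1),1}}\lesssim\|\{a_Q\}\|_{\ell^{n/(n-1),1}}$ via \eqref{eq-NWO1}, and uses $(\ell^{n/(n-1),1})^*=\ell^{n,\infty}$. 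This duality route also avoids the second unproved ingredient you rely on, namely a weak-type analogue of Lemma~\ref{eq-NWO} ("$\|T\|_{S^{n,\infty}}$ dominates the $\ell^{n,\infty}$ quasinorm of pairings over a bounded-overlap system"), which is nowhere established in the paper and would need a separate submajorization argument. Note also that the paper's necessity needs no compactness/VMO reduction: it works for $b\in L^1_{\rm loc}$ directly.

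On the sufficiency side your paraproduct route is plausible, but it is not the paper's proof, and your "more direct" justification of the key weak-type bound is incorrect as stated: splitting into level sets $\{a_Q\sim 2^{-k}\}$ and summing the $S^p$ bounds of the pieces only returns an $S^p$ estimate for $p>n$ (and the finite-rank variant loses a logarithm, since the operator norm of each level-set piece is governed by a discrete Carleson norm, not by $\sup_Q|a_Q|$); what saves you is that \eqref{eq-NWO1} is already stated for all $(p,q)$, in particular $(n,\infty)$, so for genuinely NWO families no new argument is needed. The paper instead proves sufficiency without dyadic shifts: it performs a Whitney decomposition of the off-diagonal region, expands the localized kernel in Fourier series, and splits $b(x)-b(y)$ around $\langle b\rangle_{20\sqrt nQ}$; crucially, in each resulting piece the factor carrying the oscillation of $b$ is \emph{not} an NWO sequence, so \eqref{eq-NWO1} does not apply directly, and the authors prove a substitute (the claims \textbf{(C)}, \textbf{(F1)}, \textbf{(F2)}: an $L^2$ bound by a discrete Carleson norm, a finite-rank approximation bound for singular values, and $\ell^{p,q}$-boundedness of the associated maximal operator), together with Theorem~\ref{thm:Weight-Sobolev-revised} to return to $\|b\|_{\mathcal W_\nu(\mathbb R^n)}$. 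If you pursue your route you avoid that machinery for the upper bound, but you must still justify the averaging over random grids in the normable ideal $S^{n,\infty}$ and the $\ell^{n,\infty}$ analogue of the dyadic-versus-continuous comparison of Theorem~\ref{thm:Besov-Intersect}; both are routine, unlike the necessity gap above.
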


The key idea is to introduce a suitable version of oscillation sequence space and characterize its structures, 
and then expand the localized Riesz commutator kernel via martingales. We highlight that with the two weights involved, this expansion does not fall into the scope of the classical ones.
Part of the expansion further links to the nearly weakly orthogonal sequences (NWOs) of functions associated with the weights $\mu$ and $\lambda$, while the other part is not NWO but requires a more direct verification.

It is still an open question whether $\mathcal W_{\nu}(\mathbb R^n)$ is equivalent to some two-weight Sobolev spaces.

Regarding the two weight Besov space in dimension one introduced by the first, second and third authors in \cite{LLW2022}, 
it is natural to explore a two weight Besov space of the following form, i.e., via the Sobolev--Slobodeckii norms. 
\begin{definition}\label{def:Besov V1}
Suppose $2\leq p<\infty$, $\lambda,\, \mu\in A_2$ and set $\nu=\mu^{1\over p}\lambda^{-{1\over p}}$. 
Let $b\in L_{\rm loc}^1(\mathbb R^n)$. We say that $b$ belongs to the weighted Besov space $B_{\nu}^p(\mathbb R^n)$ if
\begin{equation}\label{e:Bp}
   \|b\|_{B_{\nu}^{p}(\mathbb{R}^n)}^p
   := \int_{\mathbb{R}^n} \int_{\mathbb{R}^n}  \frac{|b(x)-b(y)|^p}{|x-y|^{2n}} \lambda (x)\mu^{-1} (y)  \,dy dx<\infty.
\end{equation}
\end{definition}

We point out that this will lead to a slightly different version of characterisations: $[b,R_j]$ belongs to $S^p \big (L_{\mu^{2/p}}^2(\mathbb R^n), L_{\lambda^{2/p}}^2(\mathbb R^n)\big )$ 
if and only if $b\in B_{\nu}^p(\mathbb R^n)$. 
Moreover, we have
$$
    \|b\|_{B_\nu^p(\mathbb R^n)}  
    \approx \left\|[b,R_j]\right\|_{S^p \big(L_{\mu^{2/p}}^2(\mathbb R^n), L_{\lambda^{2/p}}^2(\mathbb R^n)\big)}.
$$
Note that this setting is well-defined since $\lambda,\, \mu\in A_2$ imply that $\lambda^{p\over2},\, \mu^{p\over2}\in A_2$, $p\geq2$. Hence, $\nu=\mu^{1\over p}\lambda^{-{1\over p}}\in A_2$.
As this is of independent interest regarding the different versions of two weight Besov spaces, we refer readers to the full details in Version 1 of our paper \cite{LLWW}.

This paper is organized as follows. 
In Section \ref{Preliminaries} we introduce the necessary preliminaries in the two weight setting and the Schatten classes. 
In Section \ref{sec:Besov} we characterize the dyadic structure for the weighted Besov space $\mathbf B_{\nu}^p(\mathbb R^n)$. 
In Section \ref{sec:thm1} we present the proof of Theorem \ref{thm:main1}. 
In Section \ref{sec:thm2} we provide the proof of Theorem  \ref{thm:main2} for the critical index $p=n$. 
In Section \ref{sec:6} we study the properties of the weighted oscillation space $\mathcal W_{\nu}(\mathbb R^n)$ 
and prove Theorem \ref{thm:Weight-Sobolev-revised}, 
and in Section \ref{sec:thm3} we give the proof of Theorem \ref{thm:main3}.
As an application, we have the Schatten class estimate for the quantised derivative of A. Connes, 
which will be briefly addressed in the last section.


\section{Preliminaries}\label{Preliminaries}
\setcounter{equation}{0}

We recall some preliminaries for the dyadic systems and the Schatten class.


\subsection{Dyadic system in $\mathbb{R}^n$}

\begin{definition}
Let the collection $\mathcal{D}^{0}=\mathcal{D}^{0}(\mathbb{R}^n)$ 
denote the standard system of dyadic cubes in $\mathbb{R}^n$, where
$$
    \mathcal{D}^{0}(\mathbb{R}^n)
    =\bigcup_{k \in \mathbb{Z}} \mathcal{D}^{0}_{k}(\mathbb{R}^n)
$$ 
with
$$ 
    \mathcal{D}^{0}_{k}(\mathbb{R}^n)
    =\left\{2^{-k}([0,1)^{n}+m):k\in\mathbb{Z},m\in\mathbb{Z}^{n}\right\} .
$$
\end{definition}

Next, we recall shifted dyadic systems of dyadic cubes in $\mathbb{R}^n$.

\begin{definition}[\cite{HK2012,LPW}] \label{def:adjacent-dyadic}
For $\omega=(\omega_1,\omega_2,\ldots,\omega_n)\in \left\{0,\frac{1}{3},\frac{2}{3}\right\}^{n}$, 
we can define the dyadic system  
$\mathcal{D}^{\omega}=\mathcal{D}^{\omega}(\mathbb{R}^n)$,
$$
    \mathcal{D}^{\omega}(\mathbb{R}^n)
    =\bigcup_{k \in \mathbb{Z}} \mathcal{D}^{\omega}_{k}(\mathbb{R}^n),
$$
where
$$ 
    \mathcal{D}^{\omega}_{k}(\mathbb{R}^n)
    =\left\{2^{-k}\left([0,1)^{n}+m+(-1)^{k}\omega\right):k\in\mathbb{Z},m\in\mathbb{Z}^{n}\right\} .
$$
\end{definition}

It is straightforward to check that $\mathcal{D}^{\omega}$ inherits the nestedness property of $\mathcal{D}^{0}$: 
if $Q, Q' \in \mathcal{D}^{\omega}$, then $Q \cap Q' \in\{Q, Q', \varnothing\}$.  
See \cite{HK2012,LPW} for more details. 
When the particular choice of $\omega$ is unimportant, 
the notation $\mathcal{D}$ is sometimes used for a generic dyadic system.

The fundamental property of the adjacent dyadic systems is the following: for any ball  $B$ in $\mathbb R^n$, 
there exists a dyadic cube $Q\in \mathcal{D}^{\omega}$ for some $\omega\in \left\{0,\frac{1}{3},\frac{2}{3}\right\}^{n}$, 
such that  
\begin{equation}\label{eqn:adj-prop}
 B\subset Q\quad \text{and}  \quad  |Q|\leq C |B|, 
\end{equation} 
where $C$ is a positive constant independent of $B$, $Q$ and $\omega$. 


\subsection{An expression of Haar functions}

For any dyadic cube $Q\in \mathcal{D}$, 
there exist dyadic intervals $I_1, \cdots, I_n$ on $\mathbb{R}$ with common length $\ell(Q)$, 
such that $Q=I_1 \times  \cdots\times I_n$. Then $Q$ is associated with $2^n$ Haar functions:
$$
    h^{\epsilon}_{Q}(x):=h_{I_1\times\cdots\times I_n}^{(\epsilon_1,\cdots, \epsilon_n)}(x_1,\cdots,x_n)
    :=\prod_{i=1}^{n}h^{(\epsilon_i)}_{I_i}(x_i),
$$
where $\epsilon = (\epsilon_1,\cdots,\epsilon_n)\in \{0,1\}^{n}$ and
$$ 
    h^{(1)}_{I_i}
    := \frac{1}{\sqrt{I_i}}\mathsf{1}_{I_i}  \qquad\text{and}\qquad h^{(0)}_{I_i}
    :=  \frac{1}{\sqrt{I_i}}(\mathsf{1}_{I_i-}-\mathsf{1}_{I_i+}).
$$
Writing $\epsilon\equiv 1$ when $\epsilon_{i}\equiv1$ for all $i=1,2,\ldots,n$, 
$h^{1}_{Q}:=\frac{1}{\sqrt{Q}}\mathsf{1}_Q$ is non-cancellative; 
on the other hand, when $\epsilon\not\equiv1$, 
the rest of the $2^{n}-1$ Haar functions $h^{\epsilon}_{Q}$ associated with $Q$ satisfy the following properties:

\begin{lemma}\label{haar} 
For $\epsilon\not\equiv1$, we have
\begin{itemize}
\item [(1)] $h^{\epsilon}_{Q}$ is supported on $Q$ and $\int_{\mathbb{R}^n}h^{\epsilon}_{Q}(x)\,dx=0$;  

\item [(2)] $\langle h^{\epsilon}_{Q},h^{\eta}_{Q}\rangle=0$, for $\epsilon\not\equiv\eta$;  
	
\item [(3)] if $h^{\epsilon}_{Q}\neq0$, then 
$\|h^{\epsilon}_{Q}\|_{L^{p}(\mathbb{R}^{n})}= |Q|^{\frac{1}{p}-\frac{1}{2}} $ for $1\leq p\leq\infty$; 
	
\item [(4)] $ \|h^{\epsilon}_{Q}\|_{L^{1}(\mathbb{R}^{n})}\cdot\|h^{\epsilon}_{Q}\|_{L^{\infty}(\mathbb{R}^{n})}= 1$;  
   
\item[(5)] noting that the average of a function $b$ over a dyadic cube $Q$: 
$ \langle b\rangle_{Q}:=\frac{1}{|Q|}\int_{Q}b(x) \,dx $ can be expressed as:
$$ 
    \langle b\rangle_{Q}
    =\sum_{\substack{P\in \mathcal{D}, Q\subsetneq P\\\epsilon\not\equiv1}}\langle b, h^{\epsilon}_{P}\rangle h^{\epsilon}_{P}(Q), 
$$
where $h^{\epsilon}_{P}(Q)$ is a constant; 

\item[(6)] fixing a cube $Q$, and expanding $b$ in the Haar basis, we have
$$
    (b(x)- \langle b\rangle_{Q})\mathsf{1}_{Q}(x)
    =\sum_{\substack{R\in \mathcal{D},R\subset Q\\\epsilon\not\equiv1}}\langle b, h^{\epsilon}_{R}\rangle h^{\epsilon}_{R};
$$

\item[(7)] denote by $E_k b(x)=\sum_{Q\in \mathcal{D}_k} \langle b\rangle_Q \mathsf{1}_Q (x)$, then we have
$$ 
    E_{k+1} b(x)-E_k b(x) 
    =\sum_{Q\in \mathcal{D}_k} \sum_{\epsilon\not\equiv1} \langle b, h^{\epsilon}_{Q}\rangle h^{\epsilon}_{Q}.
$$
\end{itemize}
\end{lemma}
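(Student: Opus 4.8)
The plan is to reduce all seven items to the one-dimensional Haar system together with Fubini's theorem, and then to the elementary theory of dyadic martingales; there is no deep obstacle, the content being bookkeeping with tensor products. First I would record the one-variable facts for a dyadic interval $I$ and $\epsilon_i\in\{0,1\}$: $h^{(0)}_I$ has integral zero, $h^{(1)}_I=|I|^{-1/2}\mathsf{1}_I$, the two are orthogonal on $I$ since $\int_I(\mathsf{1}_{I-}-\mathsf{1}_{I+})=0$, and $\|h^{(\epsilon_i)}_I\|_{L^p(\mathbb R)}=|I|^{1/p-1/2}$ for $1\le p\le\infty$. Since $h^\epsilon_Q=\prod_{i=1}^n h^{(\epsilon_i)}_{I_i}$ with $I_1,\dots,I_n$ of common length $\ell(Q)$ and $|Q|=\prod_i|I_i|$, items (1)--(4) follow at once: for (1), the support of a tensor product is the product of the supports, and if $\epsilon\not\equiv1$ some factor $h^{(0)}_{I_i}$ has mean zero, so $\int h^\epsilon_Q=\prod_i\int h^{(\epsilon_i)}_{I_i}=0$; for (3), $\|h^\epsilon_Q\|_{L^p}=\prod_i\|h^{(\epsilon_i)}_{I_i}\|_{L^p}=\prod_i|I_i|^{1/p-1/2}=|Q|^{1/p-1/2}$; (4) is (3) with $p=1$ and $p=\infty$; and for (2), if $\epsilon\not\equiv\eta$ pick a coordinate $i$ with $\epsilon_i\ne\eta_i$, and then $\langle h^\epsilon_Q,h^\eta_Q\rangle=\prod_i\langle h^{(\epsilon_i)}_{I_i},h^{(\eta_i)}_{I_i}\rangle$ contains the zero factor $\langle h^{(0)}_{I_i},h^{(1)}_{I_i}\rangle$.

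For (7), fix $Q\in\mathcal D_k$. On $Q$, $E_{k+1}b$ equals $\sum_{Q'}\langle b\rangle_{Q'}\mathsf{1}_{Q'}$ summed over the $2^n$ dyadic children $Q'$ of $Q$, while $E_kb=\langle b\rangle_Q\mathsf{1}_Q$; hence $f_Q:=(E_{k+1}b-E_kb)\mathsf{1}_Q$ is constant on each child and has mean zero over $Q$, so it lies in the $(2^n-1)$-dimensional space $V_Q:=\mathrm{span}\{h^\epsilon_Q:\epsilon\not\equiv1\}$, which by (1)--(3) is precisely the orthocomplement of the constants inside the space of functions on $Q$ that are constant on the children. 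Computing coefficients, $\langle E_{k+1}b,h^\epsilon_Q\rangle=\sum_{Q'}|Q'|\langle b\rangle_{Q'}h^\epsilon_Q(Q')=\langle b,h^\epsilon_Q\rangle$, since $h^\epsilon_Q$ is supported in $Q$ and constant on each $Q'$, while $\langle E_kb,h^\epsilon_Q\rangle=\langle b\rangle_Q\langle\mathsf{1}_Q,h^\epsilon_Q\rangle=0$ by (1); thus $f_Q=\sum_{\epsilon\not\equiv1}\langle b,h^\epsilon_Q\rangle h^\epsilon_Q$, and summing over $Q\in\mathcal D_k$ gives (7). Item (6) is the same computation performed at all scales inside $Q$ simultaneously: $\{|Q|^{-1/2}\mathsf{1}_Q\}\cup\{h^\epsilon_R:R\in\mathcal D,\ R\subset Q,\ \epsilon\not\equiv1\}$ is an orthonormal basis of $L^2(Q)$ (orthonormal by (1)--(3), complete by iterating the children decomposition / martingale convergence), so expanding $b\mathsf{1}_Q$ and isolating the $\mathsf{1}_Q$-term, whose coefficient is $|Q|^{1/2}\langle b\rangle_Q$, yields $b\mathsf{1}_Q=\langle b\rangle_Q\mathsf{1}_Q+\sum_{R\subset Q,\ \epsilon\not\equiv1}\langle b,h^\epsilon_R\rangle h^\epsilon_R$, i.e. (6). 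Finally (5) is obtained by telescoping (7): summing $E_{i+1}b-E_ib$ over $j\le i<k$ and restricting to $Q$, only cubes $P$ with $Q\subsetneq P$ contribute (since each $h^\epsilon_P$ appearing has $\ell(P)>\ell(Q)$ and meets $Q$, and is therefore constant on $Q$), so $E_kb-E_jb=\sum_{P:\,Q\subsetneq P,\ \epsilon\not\equiv1}\langle b,h^\epsilon_P\rangle h^\epsilon_P(Q)$ on $Q$; letting $j\to-\infty$ and using $\langle b\rangle_Q=E_kb$ on $Q$ gives (5).

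The only genuine subtlety, rather than an obstacle, is the mode of convergence of the infinite Haar series in (5) and (6) when $b$ is merely in $L^1_{loc}$ rather than $L^2$. I would handle (6) via a.e. and $L^1$ martingale convergence on the probability space $(Q,|Q|^{-1}dx)$, and I would read (5) either modulo additive constants (the term $\lim_{j\to-\infty}E_jb$, whose precise value is irrelevant to us since a constant symbol produces the zero commutator) or under the standing $\mathrm{VMO}_\nu$ hypothesis on $b$ used throughout the paper, which forces the large-scale averages to vanish. I would state this convention once and invoke it wherever (5) and (6) are used later.
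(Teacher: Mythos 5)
Your proof is correct; the paper itself states Lemma~\ref{haar} without proof, treating these as standard facts about the Haar system, and your tensor-product reduction for (1)--(4) together with the martingale-difference computation for (5)--(7) is exactly the standard argument one would write down. The one point that genuinely needs care is the one you flag: for $b\in L^1_{\rm loc}$ the limit $\lim_{j\to-\infty}E_jb$ need not vanish, so (5) should be read modulo an additive constant (or under the standing ${\rm VMO}_\nu$ hypothesis); this is consistent with how the paper actually invokes (5), namely only through differences such as $\langle b\rangle_{\sigma(Q)}-\langle b\rangle_Q$ in Proposition~\ref{prop:Rb}, where the ambiguous constant cancels, while (6) and (7) are unproblematic by a.e.\ and $L^1$ martingale convergence on $Q$ as you say.
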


We will also use the notation 
\begin{equation} \label{e:Delta}
    \Delta _Q  b = \sum_{\epsilon\not\equiv1} \langle b, h^{\epsilon}_{Q}\rangle h^{\epsilon}_{Q} 
    = (E_{k+1}b-E_k b) \cdot \mathsf{1}_Q. 
\end{equation}


\subsection{Characterization of Schatten Class}\label{s:Schatten}

Let $\mathcal{H}_1$ and $\mathcal{H}_2$ be separable complex Hilbert spaces. 
Suppose $T$ is a compact operator from $\mathcal{H}_1$ to  $\mathcal{H}_2$, 
let $T^*$ be the adjoint operator, it is clear that $|T|=(T^*T)^{\frac{1}{2}}$ is a compact, 
self-adjoint, non-negative operator from $\mathcal{H}_1$ to  $\mathcal{H}_1$. 
Let $\left(\psi_k\right)_k$ be an orthonormal basis for $\mathcal{H}_1$ consisting of eigenvectors of $|T|$, 
and let $s_k(T)$ be the eigenvalue corresponding to the eigenvector $\psi_k, k\in \mathbb{Z}^+$.
The numbers $s_1(T) \geq s_2(T) \geq \cdots \geq s_n(T) \geq \cdots \geq 0$, are called the singular values of $T$. 
If $0<p<\infty$, $0<q\leq \infty$ and the sequence of singular values is $\ell^{p,q}$-summable (with respect to a weight), 
then $T$ is said to belong to the Schatten--Lorentz class $S^{p,q}\left(\mathcal{H}_1, \mathcal{H}_2\right)$.
That is, $\|T\|_{S^{p,q}\left(\mathcal{H}_1, \mathcal{H}_2\right)}
=\big(\sum_{k\in \mathbb{Z}^{+}}\left(s_{k}\left(T\right)\right)^{q}k^{\frac{q}{p}-1}\big)^{1\over q}$, $q<\infty$, and
$\|T\|_{S^{p,\infty}\left(\mathcal{H}_1, \mathcal{H}_2\right)}
=\sup_{k\in \mathbb{Z}^{+}}s_{k}\left(T\right) k^{\frac{1}{p}}$, $ q=\infty$.  
Clearly, $S^{p,p}\left(\mathcal{H}_1, \mathcal{H}_2\right)=S^{p}\left(\mathcal{H}_1, \mathcal{H}_2\right)$. 

In 1989, Rochberg and Semmes \cite{RS1989} introduced methods that are very useful 
in estimating Schatten norms in the setting of commutators. 
A key concept is that of \textit{nearly weakly orthogonal} ({\rm NWO}) sequences of functions.

\begin{definition}\label{def:NWO}
Let $\{e_{Q}\}_{Q\in \mathcal{D}}$ be a collection of functions. 
We say $\{e_Q\}_{Q\in \mathcal{D}}$ is a {\rm NWO} sequence, 
if  ${\rm supp}\,  e_Q \subset Q$ and the maximal function $f^*$ is bounded on $L^2(\mathbb{R}^n)$, 
where $f^*$ is defined as
$$
    f^*(x)=\sup_{Q}\frac{|\langle f,e_Q \rangle|}{|Q|^{1\over 2}}\mathsf{1}_{Q}(x).
$$
\end{definition}

In this paper, we work with weighted versions. We will use the following result proved by Rochberg and Semmes.

\begin{lemma}[\cite{RS1989}]\label{lem:NWO}
If the collection of functions $\left\{e_Q\right\}_{Q \in \mathcal{D}}$ are supported on $Q$ 
and satisfy for some $2<r<\infty$, $\|e_Q\|_r \lesssim|Q|^{{1 \over  r}-{1 \over  2}}$, 
then $\left\{e_Q\right\}_{Q\in \mathcal{D}}$ is a {\rm NWO} sequence.
\end{lemma}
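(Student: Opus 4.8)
The plan is to verify that the maximal function $f^*(x) = \sup_Q |Q|^{-1/2}|\langle f, e_Q\rangle| \mathsf{1}_Q(x)$ is bounded on $L^2(\mathbb{R}^n)$ by dominating it pointwise by a standard maximal operator applied to $|f|^{r'}$ raised to an appropriate power. First I would fix $x \in \mathbb{R}^n$ and a dyadic cube $Q \ni x$, and apply Hölder's inequality with exponents $r$ and $r'$ (where $1/r + 1/r' = 1$, so $1 < r' < 2$ since $2 < r < \infty$) to the pairing:
$$
|\langle f, e_Q \rangle| \le \|e_Q\|_{L^r(Q)} \, \|f \mathsf{1}_Q\|_{L^{r'}(\mathbb{R}^n)} \lesssim |Q|^{\frac1r - \frac12} \left( \int_Q |f|^{r'} \right)^{\frac{1}{r'}}.
$$
Dividing by $|Q|^{1/2}$, I get
$$
\frac{|\langle f, e_Q\rangle|}{|Q|^{1/2}} \lesssim |Q|^{\frac1r - 1} \left( \int_Q |f|^{r'}\right)^{\frac{1}{r'}} = \left( \frac{1}{|Q|}\int_Q |f|^{r'}\right)^{\frac{1}{r'}} \le \big( M(|f|^{r'})(x)\big)^{\frac{1}{r'}},
$$
since $1/r - 1 = -1/r'$ and $Q \ni x$, where $M$ denotes the Hardy--Littlewood maximal operator. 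Taking the supremum over all $Q \ni x$ yields the pointwise bound $f^*(x) \lesssim \big( M(|f|^{r'})(x)\big)^{1/r'}$.

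The conclusion then follows from the boundedness of $M$ on $L^{2/r'}(\mathbb{R}^n)$: because $r' < 2$, we have $2/r' > 1$, so $M$ is bounded on $L^{2/r'}$, and hence
$$
\|f^*\|_{L^2}^2 \lesssim \big\| M(|f|^{r'})\big\|_{L^{2/r'}}^{2/r'} \lesssim \big\| |f|^{r'}\big\|_{L^{2/r'}}^{2/r'} = \|f\|_{L^2}^2.
$$
This shows $f \mapsto f^*$ is bounded on $L^2(\mathbb{R}^n)$, which is exactly the defining property of an NWO sequence in Definition~\ref{def:NWO}. The only structural hypothesis used beyond the support condition is the size estimate $\|e_Q\|_r \lesssim |Q|^{1/r - 1/2}$, and the strict inequality $r > 2$ is what makes the dual exponent $r'$ strictly less than $2$, which is essential for invoking the $L^{2/r'}$-boundedness of $M$.

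There is no serious obstacle here; the argument is a routine Hölder-plus-maximal-function estimate. The one point requiring a little care is bookkeeping the exponents to confirm that the power of $|Q|$ collapses exactly into an average (so that the maximal function appears with no residual factor), and noting that the endpoint $r = 2$ is genuinely excluded since there $M$ would need to be bounded on $L^1$, which fails. Everything is uniform in $f$, and the implicit constants depend only on $n$, $r$, and the implicit constant in the hypothesized size bound on the $e_Q$.
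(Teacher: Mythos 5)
Your proof is correct, and it is exactly the standard H\"older-plus-Hardy--Littlewood-maximal-function argument from Rochberg--Semmes; the paper itself states this lemma without proof, citing \cite{RS1989}, so your argument supplies precisely the expected justification, with the exponent bookkeeping ($1/r-1=-1/r'$, and $2/r'>1$ so $M$ is bounded on $L^{2/r'}$) handled correctly.
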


Let $\mathcal{H}_1$, $\mathcal{H}_2$ be separable complex Hilbert spaces. 
In \cite{RS1989} Rochberg and Semmes provided a substitute for 
the Schmidt decomposition of the operator $T$ 
$$
    T=\sum_{Q\in \mathcal{D}}\lambda_{Q}\langle\cdot,e_{Q}\rangle f_{Q}
$$
with $\{e_{Q}\}_{Q\in \mathcal{D}}$ and $\{f_{Q}\}_{Q\in \mathcal{D}}$ 
being {\rm NWO} sequences and $\{\lambda_{Q}\}_{Q\in \mathcal{D}}$ is a sequence of scalars. 
From \cite[Corollary 2.8]{RS1989} we see that
\begin{align}\label{eq-NWO1}
    \|T\|_{S^{p,q}(\mathcal{H}_1,\mathcal{H}_2)}
    \lesssim 
    \left\|\left\{\lambda_Q\right\}_{Q\in \mathcal D}\right\|_{\ell^{p,q}}, \quad 0<p<\infty, 0<q\leq\infty.
\end{align}
When $1<p=q<\infty$, Rochberg and Semmes also obtained

\begin{lemma}[\cite{RS1989}]\label{eq-NWO}
For any bounded compact operator $T$ on $L^2(\mathbb{R}^{n})$, and $\{e_{Q}\}_{Q\in \mathcal{D}}$ 
and $\{f_{Q}\}_{Q\in \mathcal{D}}$ {\rm NWO} sequences, then for $1<p<\infty$, 
$$
    \left[\sum_{Q\in \mathcal{D}}\left|\langle T e_{Q},f_{Q}\rangle\right|^{p}\right]^{\frac{1}{p}}
    \lesssim 
    \|T\|_{S^{p}(L^2(\mathbb R^n),L^2(\mathbb R^n) )}.
$$
\end{lemma}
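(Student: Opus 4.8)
The plan is to prove Lemma \ref{eq-NWO} as a duality statement, exploiting the pairing between $S^p$ and $S^{p'}$ together with the ``substitute Schmidt decomposition'' and the basic NWO bound \eqref{eq-NWO1}. The quantity on the left-hand side is $\|\{\langle Te_Q,f_Q\rangle\}_Q\|_{\ell^p}$, so by $\ell^p$--$\ell^{p'}$ duality it suffices to show that for every finitely supported scalar sequence $\{\mu_Q\}_Q$ with $\|\{\mu_Q\}_Q\|_{\ell^{p'}}\le 1$ one has $\big|\sum_Q \mu_Q \langle Te_Q,f_Q\rangle\big|\lesssim \|T\|_{S^p}$, where $\tfrac1p+\tfrac1{p'}=1$ and $1<p'<\infty$ since $1<p<\infty$.

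First I would rewrite the sum as a trace: $\sum_Q \mu_Q\langle Te_Q,f_Q\rangle = \operatorname{tr}(TU)$ where $U=\sum_Q \mu_Q\,\langle\cdot,f_Q\rangle e_Q$ is the operator built from the given NWO sequences $\{e_Q\}_Q$, $\{f_Q\}_Q$ and the scalars $\{\mu_Q\}_Q$. By Hölder's inequality for Schatten norms (the trace duality $|\operatorname{tr}(TU)|\le \|T\|_{S^p}\|U\|_{S^{p'}}$), the claim reduces to the bound $\|U\|_{S^{p'}}\lesssim \|\{\mu_Q\}_Q\|_{\ell^{p'}}$. But $U$ has precisely the form of the substitute Schmidt decomposition with NWO sequences $\{f_Q\}_Q$ and $\{e_Q\}_Q$ and coefficient sequence $\{\mu_Q\}_Q$, so \eqref{eq-NWO1} (with $p=q=p'$) gives exactly $\|U\|_{S^{p'}(L^2,L^2)}\lesssim \|\{\mu_Q\}_Q\|_{\ell^{p'}}$. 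Taking the supremum over admissible $\{\mu_Q\}_Q$ then yields the lemma. One should note the implied constant depends only on the NWO constants of $\{e_Q\}_Q,\{f_Q\}_Q$ and on $p$, consistent with the usage later in the paper.

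The steps, in order, are: (i) reduce to a dual formulation via $\ell^p$--$\ell^{p'}$ duality, restricting to finitely supported $\{\mu_Q\}_Q$ so that all sums are finite and $U$ is manifestly bounded and compact; (ii) identify $\sum_Q\mu_Q\langle Te_Q,f_Q\rangle$ with $\operatorname{tr}(TU)$, checking that $U=\sum_Q\mu_Q\langle\cdot,f_Q\rangle e_Q$ is trace-class-against-$S^p$, i.e. that the pairing is well defined (this is where one uses that $T\in S^p$ and $U\in S^{p'}$, and that for finite sums there is no convergence issue); (iii) apply trace duality $|\operatorname{tr}(TU)|\le\|T\|_{S^p}\|U\|_{S^{p'}}$; (iv) invoke \eqref{eq-NWO1} to bound $\|U\|_{S^{p'}}$ by $\|\{\mu_Q\}_Q\|_{\ell^{p'}}$; (v) pass to the supremum over $\{\mu_Q\}_Q$ and, if desired, remove the finiteness restriction by a monotone limit / Fatou argument on $\big[\sum_Q|\langle Te_Q,f_Q\rangle|^p\big]^{1/p}$.

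The main obstacle is step (ii): one must be careful that the bilinear pairing $\langle Te_Q,f_Q\rangle$ summed against $\mu_Q$ genuinely equals $\operatorname{tr}(TU)$ and that this trace is absolutely convergent. For finitely supported $\{\mu_Q\}_Q$ the operator $U$ has finite rank, so $\operatorname{tr}(TU)$ is unambiguous and the identity $\operatorname{tr}(TU)=\sum_Q\mu_Q\langle Te_Q,f_Q\rangle$ follows by expanding $U$ and using linearity and the cyclicity of the trace on finite-rank operators; this is the clean way to avoid convergence subtleties. The only other point requiring a little care is that $\{e_Q\}_Q$ and $\{f_Q\}_Q$ need not be orthonormal, so \eqref{eq-NWO1} — which is stated exactly for such ``substitute Schmidt'' sums of NWO sequences — is precisely the right black box to apply, and no orthogonality is needed. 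Everything else is routine.
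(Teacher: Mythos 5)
Your argument is correct, but note that the paper does not prove this lemma at all: it is quoted verbatim from Rochberg--Semmes \cite{RS1989}, just as \eqref{eq-NWO1} is, so there is no in-paper proof to compare against. What you have done is supply a genuine derivation of the lemma from the other quoted black box: writing $\sum_Q \mu_Q\langle Te_Q,f_Q\rangle=\operatorname{tr}(TU)$ with $U=\sum_Q\mu_Q\langle\cdot,f_Q\rangle e_Q$, invoking Schatten--H\"older trace duality $|\operatorname{tr}(TU)|\le\|T\|_{S^p}\|U\|_{S^{p'}}$, and then bounding $\|U\|_{S^{p'}}\lesssim\|\{\mu_Q\}\|_{\ell^{p'}}$ by \eqref{eq-NWO1} with $p=q=p'$ (which is legitimate since $1<p'<\infty$ lies in its stated range and $U$ has exactly the substitute-Schmidt form, the roles of the two NWO families merely being swapped). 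The details you flag are handled correctly: restricting to finitely supported $\{\mu_Q\}$ makes $U$ finite rank so the trace identity $\operatorname{tr}(TU)=\sum_Q\mu_Q\langle Te_Q,f_Q\rangle$ is elementary, unimodular phases in $\mu_Q$ recover the absolute values, and a monotone limit removes the finiteness restriction; no orthogonality of $\{e_Q\},\{f_Q\}$ is needed anywhere. The one thing your write-up buys beyond the paper is that it shows Lemma \ref{eq-NWO} is not an independent input but a formal consequence of \eqref{eq-NWO1} by duality, with the implied constant depending only on the NWO constants and $p$ --- consistent with how both results are used later in the paper.
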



\subsection{Muckenhoupt weights}

Let $w(x)$ be a nonnegative locally integrable function on $\mathbb R^n$. 
For $1 < p < \infty$, we say $w$ is an $A_p$ {weight}, written $w\in A_p$, if
$$
    [w]_{A_p} := \sup_Q \Bigg({1\over |Q|}\int_Q w(x) \,dx\Bigg)\, 
    \Bigg({1\over |Q|}\int_Q  {w(x)}^{-{1\over p-1}}\,dx\Bigg)^{p-1} < \infty,
$$
where the supremum is taken over all cubes $Q\subset \mathbb R^n$. 
The quantity $[w]_{A_p}$ is called the {$A_p$ constant of $w$}.

It is well known that $A_p$ weights are doubling. Namely,

\begin{lemma}[\cite{Gra1}] \label{double}
Let $w \in A_p$. Then for every $\lambda>1$ and for every cube $Q \subset \mathbb{R}^n$,
$$
    w(\lambda Q) \leq [w]_{A_p} \lambda^{pn} \,w(Q).
$$
\end{lemma}

In this article, we will also use the reverse H\"{o}lder inequality for $A_2$ weights.

\begin{lemma}[\cite{Gra1}]\label{reverse}
Let $w\in A_{2}$. 
There is a reverse doubling index $\sigma_{w}>0$, 
such that for every cube $Q\subset \mathbb{R}^n$, we have
\begin{equation}\label{eqn:reverse}
    \bigg[\frac{1}{|Q|}\int_{Q}w^{1+\sigma_{w}}(x)\,dx  \bigg]^{\frac{1}{1+\sigma_{w}}}
    \lesssim 
    \frac{w(Q)}{|Q|}.
\end{equation}
\end{lemma}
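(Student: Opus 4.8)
This is a classical estimate (it is cited above from Grafakos), so I describe the route one would take: derive everything from the $A_2$ hypothesis by the Calderón--Zygmund / distribution-function method. First I would record an $A_\infty$-type non-concentration bound: for any cube $Q$ and any measurable $E\subseteq Q$, Hölder's inequality together with the defining inequality for $[w]_{A_2}$ gives
\[
 \frac{|E|}{|Q|}=\frac{1}{|Q|}\int_E w^{1/2}w^{-1/2}\,dx
 \le \frac{w(E)^{1/2}}{|Q|}\Big(\int_Q w^{-1}\,dx\Big)^{1/2}
 \le [w]_{A_2}^{1/2}\Big(\frac{w(E)}{w(Q)}\Big)^{1/2}.
\]
Applying this with $Q\setminus E$ in place of $E$ yields constants $\varepsilon,\gamma\in(0,1)$, depending only on $[w]_{A_2}$, such that $|E|\le\varepsilon|Q|$ forces $w(E)\le(1-\gamma)w(Q)$.

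Next I would fix an arbitrary cube $Q_0$, set $\alpha_0=\langle w\rangle_{Q_0}$ and $\beta=2^n/\varepsilon$, and consider the level sets $\Omega_k=\{x\in Q_0:\ M^{\mathcal D(Q_0)}w(x)>\beta^k\alpha_0\}$ of the dyadic maximal operator relative to $Q_0$. The Calderón--Zygmund stopping-time decomposition at height $\beta^k\alpha_0$ writes $\Omega_k$ as a disjoint union of maximal dyadic subcubes $Q_j^k\subset Q_0$ with $\beta^k\alpha_0<\langle w\rangle_{Q_j^k}\le 2^n\beta^k\alpha_0$; by maximality every dyadic cube strictly between $Q_j^k$ and $Q_0$ has $w$-average at most $\beta^k\alpha_0$, so $\Omega_{k+1}\cap Q_j^k$ equals $\{x\in Q_j^k:\ M^{\mathcal D(Q_j^k)}w(x)>\beta^{k+1}\alpha_0\}$. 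The weak $(1,1)$ bound for the dyadic maximal operator then gives $|\Omega_{k+1}\cap Q_j^k|\le \beta^{-(k+1)}\alpha_0^{-1}\int_{Q_j^k}w\le\varepsilon|Q_j^k|$, so the first step yields $w(\Omega_{k+1}\cap Q_j^k)\le(1-\gamma)w(Q_j^k)$; summing over $j$ and iterating in $k$ produces $w(\Omega_k)\le(1-\gamma)^kw(Q_0)$. Since $w\le M^{\mathcal D(Q_0)}w$ a.e.\ on $Q_0$, this gives the distributional bound $w(\{x\in Q_0:w(x)>\beta^k\alpha_0\})\le(1-\gamma)^kw(Q_0)$.

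Finally I would integrate up: writing $\int_{Q_0}w^{1+\sigma}=\sigma\int_0^\infty t^{\sigma-1}w(\{x\in Q_0:w(x)>t\})\,dt$ and splitting at $t=\alpha_0$, the part over $0<t\le\alpha_0$ is at most $\alpha_0^\sigma w(Q_0)$, while the dyadic block $[\beta^k\alpha_0,\beta^{k+1}\alpha_0)$ contributes at most $\beta^{(k+1)\sigma}\alpha_0^\sigma(1-\gamma)^kw(Q_0)$. The series $\sum_k(\beta^\sigma(1-\gamma))^k$ converges precisely when $\beta^\sigma(1-\gamma)<1$, and any $\sigma_w$ with $0<\sigma_w<(\log\frac{1}{1-\gamma})/\log\beta$ then serves as the reverse doubling index, giving $\int_{Q_0}w^{1+\sigma_w}\lesssim\alpha_0^{\sigma_w}w(Q_0)=|Q_0|^{-\sigma_w}w(Q_0)^{1+\sigma_w}$; dividing by $|Q_0|$ and taking $(1+\sigma_w)$-th roots is exactly \eqref{eqn:reverse}, with implicit constant depending only on $n$ and $[w]_{A_2}$.

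The step I expect to need the most care is the stopping-time bookkeeping in the second paragraph: one must check that, modulo null sets, $\Omega_{k+1}$ intersected with a stopping cube $Q_j^k$ coincides with a level set of the maximal operator localized to $Q_j^k$ — so that the scale-invariant weak $(1,1)$ estimate applies there — and that the constants $\varepsilon,\gamma$ produced in the first step are genuinely uniform over all cubes. Everything else is a routine computation.
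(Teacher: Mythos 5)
Your argument is correct, and the concerns you flag at the end are in fact fine: since any dyadic cube $R\ni x$ with $\langle w\rangle_R>\beta^{k+1}\alpha_0>\beta^k\alpha_0$ must, by maximality of the stopping cubes and nestedness, be contained in the unique $Q_j^k$ containing $x$, the set $\Omega_{k+1}\cap Q_j^k$ is exactly the localized level set you need, and the constants $\varepsilon,\gamma$ from the H\"older/$A_2$ step depend only on $[w]_{A_2}$, hence are uniform over all cubes and all generations. The paper offers no proof of Lemma~\ref{reverse} (it is quoted from \cite{Gra1}), and your Calder\'on--Zygmund stopping-time argument is essentially the standard proof given in that reference, so there is nothing further to reconcile.
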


\begin{lemma}[\cite{Gra1}]\label{power}
Suppose $w\in A_p$ for some $p\in(1,\infty)$. 
For $0<\delta<1$, $w^{\delta}\in A_q$ for $q=\delta+1<2 $.
Moreover, $[w^\delta]_{A_q}\leq [w]_{A_p}^\delta$.
\end{lemma}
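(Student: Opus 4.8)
The plan is to verify the $A_q$ condition for $w^{\delta}$ directly, reading it off from the $A_p$ condition for $w$ via two applications of Jensen's inequality; the work is essentially bookkeeping of exponents. I would first record that, with $q=\delta+1$, one has $q-1=\delta$, so the conjugate weight appearing in the $A_q$ functional is $(w^{\delta})^{-1/(q-1)} = w^{-\delta/\delta} = w^{-1}$, whence
\begin{equation*}
    [w^{\delta}]_{A_q} = \sup_Q \bigg(\frac{1}{|Q|}\int_Q w^{\delta}\,dx\bigg)\bigg(\frac{1}{|Q|}\int_Q w^{-1}\,dx\bigg)^{\delta}.
\end{equation*}

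Fix a cube $Q$ and regard $dx/|Q|$ as a probability measure on $Q$. Since $0<\delta<1$, the function $t\mapsto t^{\delta}$ is concave, so Jensen's inequality gives $\frac{1}{|Q|}\int_Q w^{\delta}\,dx \le \big(\frac{1}{|Q|}\int_Q w\,dx\big)^{\delta}$. For the second average I would write $w^{-1} = \big(w^{-1/(p-1)}\big)^{p-1}$ and apply Jensen once more, now to the concave function $t\mapsto t^{p-1}$ (valid when $1<p\le 2$; when $p=2$ — the case used throughout this paper — this step is just an identity), to get $\frac{1}{|Q|}\int_Q w^{-1}\,dx \le \big(\frac{1}{|Q|}\int_Q w^{-1/(p-1)}\,dx\big)^{p-1}$. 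Substituting both bounds and pulling the exponent $\delta$ outside yields
\begin{equation*}
    [w^{\delta}]_{A_q} \le \sup_Q \bigg[\bigg(\frac{1}{|Q|}\int_Q w\,dx\bigg)\bigg(\frac{1}{|Q|}\int_Q w^{-1/(p-1)}\,dx\bigg)^{p-1}\bigg]^{\delta} = [w]_{A_p}^{\delta},
\end{equation*}
which in particular is finite; hence $w^{\delta}\in A_q$, and $q = \delta+1 < 2$ because $\delta < 1$.

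There is no real obstacle here: the lemma is a direct computation. The only two points needing care are (i) invoking \emph{concavity} rather than convexity in both Jensen steps, so that the inequalities point the correct way — this is exactly what uses $0<\delta<1$ (and $p\le 2$); and (ii) the exponent arithmetic that collapses $(w^{\delta})^{-1/(q-1)}$ to $w^{-1}$, which is precisely what forces $q = \delta+1$. Since all weights occurring in this paper are $A_2$, the relevant instance is $p=2$, where both points are immediate and one obtains $[w^{\delta}]_{A_{\delta+1}} \le [w]_{A_2}^{\delta}$.
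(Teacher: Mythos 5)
Your computation is the standard proof of this fact and is essentially what lies behind the paper's citation to Grafakos (the paper itself gives no argument): the exponent bookkeeping $(w^{\delta})^{-1/(q-1)}=w^{-1}$ for $q=\delta+1$, followed by the two concavity (Jensen) steps, is exactly right, and it proves the stated bound $[w^{\delta}]_{A_{\delta+1}}\le [w]_{A_p}^{\delta}$ for $1<p\le 2$ --- in particular for $p=2$, which is the only case the paper ever invokes, since all weights there are in $A_2$.

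The restriction $p\le 2$ that you flag deserves emphasis, because it is not a defect of your proof that a cleverer argument could remove: for $p>2$ the lemma as printed is false. Take $w(x)=|x|^{a}$ with $n<a<n(p-1)$, which lies in $A_p$ but not in $A_2$; then $w^{\delta}=|x|^{\delta a}$ and $\delta a>\delta n=n(q-1)$ with $q=\delta+1$, so $w^{\delta}\notin A_{\delta+1}$ for every $\delta\in(0,1)$. The correct statement in full generality (and the one proved in Grafakos by the same double-Jensen computation, with the second Jensen step applied at exponent $\delta/(q-1)\le 1/(p-1)$) is $w^{\delta}\in A_{\delta(p-1)+1}$ with $[w^{\delta}]_{A_{\delta(p-1)+1}}\le[w]_{A_p}^{\delta}$, which reduces to $q=\delta+1$ precisely when $p=2$. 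So your proof establishes the lemma in the form in which it is actually used; the hypothesis $p\in(1,\infty)$ in the statement should be read as $p=2$ (or the conclusion adjusted to $q=\delta(p-1)+1$).
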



\subsection{Weight BMO and VMO spaces}

We recall the weighted BMO space and VMO space and their dyadic versions.

\begin{definition}\label{MWbmo}
Suppose $w \in A_\infty$. 
A function $b\in L^1_{\rm loc}(\mathbb R^n)$ belongs to ${ \rm BMO}_{w}(\mathbb R^n)$ if
$$
    \|b\|_{{\rm  BMO}_w(\mathbb R^n)}
    :=
    \sup_{B}{1\over w(B)}\int_{B} |b(x)-\langle b\rangle _B| \,dx<\infty,
$$
where the supremum is taken over all balls $B\subset \mathbb R^n$.  
The dyadic weighted {\rm BMO} space ${\rm  BMO}_{w}(\mathbb R^n,\mathcal D)$ associated with $\mathcal D$ 
consists of functions $b\in L^1_{\rm loc}(\mathbb R^n)$ such that 
$\|b\|_{{\rm  BMO}_{w}(\mathbb R^n,\mathcal D)}<\infty$, 
where the $\|b\|_{{\rm  BMO}_{w}(\mathbb R^n,\mathcal D)}$ is defined the same as above with intervals replaced by dyadic cubes.
\end{definition}

\begin{definition}\label{vmo}
Suppose $w \in A_\infty$. 
A function $b\in {\rm  BMO}_w(\mathbb R^n)$ belongs to ${\rm  VMO}_w(\mathbb R^n)$ if
\begin{itemize}
\item [(i)] $\displaystyle \ \lim_{a\to0}\sup_{B:\  r_B=a}{1\over w(B)}\int_{B} \left|b(x)-\langle b\rangle _B\right| dx=0$,

\item[(ii)]  $\displaystyle  \lim_{a\to\infty}\sup_{B:\  r_B=a}{1\over w(B)}\int_{B}\left |b(x)-\langle b\rangle _B\right| dx=0$,

\item[ (iii)] $\displaystyle \lim_{a\to\infty}\sup_{B\subset \mathbb R^n \backslash B(x_0,a)}
{1\over w(B)}\int_{B} \left|b(x)-\langle b\rangle _B\right| dx=0$,
\end{itemize}
where $x_0$ is any fixed point in $\mathbb R^n$.  
A function $b\in { \rm BMO}_{w}(\mathbb R^n,\mathcal D)$ belongs to the dyadic weighted {\rm VMO}  space
${ \rm VMO}_{w}(\mathbb R^n,\mathcal D)$ if the above three limits hold with intervals replaced by dyadic cubes in $\mathcal D$.
\end{definition}


\section{Weighted Besov space $\mathbf B_\nu^p(\mathbb R^n)$ and its dyadic versions}\label{sec:Besov}
\setcounter{equation}{0}

 As a companion to the definition of the weighted Besov space $\mathbf B_{\nu}^p(\mathbb R^n)$, in Definition \ref{def:Sequence},  
we have its dyadic versions on $\mathbb R^n$ defined here. 

\begin{definition}\label{def:Besov-endpoint-Delta}Suppose $0< p<\infty$.
Suppose $\nu \in A_2$ and $0<p<\infty$.   
Let $b\in L_{\rm loc}^1(\mathbb{R}^n)$ and $\D$ be an arbitrary dyadic system in $\mathbb{R}^n$. 
Then $b$ belongs to the weighted dyadic Besov space $\mathbf B_{\nu}^p(\mathbb{R}^n, \mathcal D)$ 
associated with  $\mathcal D$, if and only if
\begin{equation} \label{e:DyadicB}
    \|b\|_{\mathbf  B_{\nu}^p(\mathbb{R}^n, \mathcal D)} ^{p}:= 
    \sum_{Q\in \mathcal D}  \left( \frac{1}{ \nu(Q)}  \int_Q  \left|b(x)-\langle b\rangle _Q\right| dx \right)^p  <\infty.
\end{equation}
\end{definition}

This space has several alternate definitions, which are useful in different ways.  
Among them, an equivalent set of definitions in terms of Haar functions will be very useful 
in the analysis of the Haar shift paraproducts to come.   
 
\begin{proposition}  
Suppose $0<p<\infty$, $\mu,\lambda\in A_2$ and set $\nu=\mu^{\frac{1}{2}}\lambda^{-\frac{1}{2}}$.
We have the equivalences below  
\begin{equation}\label{eqn:equvi-character-seq}
     \|b\|_{\mathbf B_{\nu}^p(\mathbb{R}^n, \mathcal D)} ^{p}  
     \approx \sum_{Q\in \mathcal D:\, \epsilon\not\equiv 1}   
     \left( \frac {|\langle b, h_Q^\epsilon  \rangle|}{ \sqrt{\lvert Q \rvert}}  \right)^p
R(Q)^p  ,
\end{equation}
where the term $R(Q)$ can be any of the expressions below:
$$
    R(Q)\in\left\{\frac{|Q| }  {\nu(Q)} , \frac{|Q|}{\lambda^{-1}(Q)^{\frac{1}{2}}\mu(Q)^{\frac{1}{2}}}, 
    \frac{\nu^{-1}(Q)}{|Q|},\frac{\lambda(Q)^{\frac{1}{2}}\mu^{-1}(Q)^{\frac{1}{2}}}{|Q|}\right\}.
$$
\end{proposition}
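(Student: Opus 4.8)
The plan is to first reduce \eqref{eqn:equvi-character-seq} to a single admissible choice of $R(Q)$, and then to prove the resulting Haar-coefficient identity by comparing it to a martingale-difference sum and establishing a weighted dyadic Hardy inequality.

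\emph{Step 1: comparability of the four weights $R(Q)$.} Since $\mu,\lambda\in A_2$, Cauchy--Schwarz applied to each factor of $[\nu]_{A_2}=\sup_Q\langle\nu\rangle_Q\langle\nu^{-1}\rangle_Q$ shows $\nu=\mu^{1/2}\lambda^{-1/2}\in A_2$ with $[\nu]_{A_2}\le[\mu]_{A_2}^{1/2}[\lambda]_{A_2}^{1/2}$; hence $|Q|^2\le\nu(Q)\nu^{-1}(Q)\le[\nu]_{A_2}|Q|^2$, which gives $\tfrac{|Q|}{\nu(Q)}\approx\tfrac{\nu^{-1}(Q)}{|Q|}$. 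Cauchy--Schwarz also gives $\nu(Q)\le\mu(Q)^{1/2}\lambda^{-1}(Q)^{1/2}$, while the $A_2$ conditions for $\mu$ and for $\lambda^{-1}$ bound $\mu(Q)^{1/2}\lambda^{-1}(Q)^{1/2}$ from above by a multiple of $|Q|^2/(\mu^{-1}(Q)^{1/2}\lambda(Q)^{1/2})\le|Q|^2/\nu^{-1}(Q)\le\nu(Q)$; so $\nu(Q)\approx\mu(Q)^{1/2}\lambda^{-1}(Q)^{1/2}$, and the dual computation gives $\nu^{-1}(Q)\approx\mu^{-1}(Q)^{1/2}\lambda(Q)^{1/2}$. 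All four expressions for $R(Q)$ are therefore mutually comparable with constants depending only on $n,[\mu]_{A_2},[\lambda]_{A_2}$, so it suffices to prove \eqref{eqn:equvi-character-seq} with $R(Q)=|Q|/\nu(Q)$.

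\emph{Step 2: passage to martingale differences.} With this choice, the right side of \eqref{eqn:equvi-character-seq} equals $\sum_{Q,\epsilon}\big(|\langle b,h_Q^\epsilon\rangle|\sqrt{|Q|}/\nu(Q)\big)^p$. Using $\|h_Q^\epsilon\|_{L^1}=|Q|^{1/2}$ and $\|h_Q^\epsilon\|_{L^\infty}=|Q|^{-1/2}$ (Lemma \ref{haar}(3)) together with $\langle b,h_Q^\epsilon\rangle=\langle\Delta_Qb,h_Q^\epsilon\rangle$, one gets $|\langle b,h_Q^\epsilon\rangle|\,|Q|^{1/2}\le\|\Delta_Qb\|_{L^1}\le\sum_{\epsilon\not\equiv1}|\langle b,h_Q^\epsilon\rangle|\,|Q|^{1/2}$; since the $\epsilon$-sum is finite, the right side of \eqref{eqn:equvi-character-seq} is $\approx\sum_{Q\in\mathcal D}\big(\|\Delta_Qb\|_{L^1}/\nu(Q)\big)^p$. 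Thus the proposition reduces to $\|b\|_{\mathbf B_\nu^p(\mathbb R^n,\mathcal D)}^p\approx\sum_{Q\in\mathcal D}\big(\|\Delta_Qb\|_{L^1}/\nu(Q)\big)^p$.

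\emph{Step 3: the two estimates, and the main obstacle.} One direction is immediate: on $Q$ one has $\Delta_Qb=\sum_{Q'}(\langle b\rangle_{Q'}-\langle b\rangle_Q)\mathsf{1}_{Q'}$ over the dyadic children $Q'$ of $Q$, whence $\|\Delta_Qb\|_{L^1}\le\int_Q|b-\langle b\rangle_Q|\,dx$ and so $\sum_Q(\|\Delta_Qb\|_{L^1}/\nu(Q))^p\le\|b\|_{\mathbf B_\nu^p(\mathbb R^n,\mathcal D)}^p$. For the reverse, the Haar expansion $(b-\langle b\rangle_Q)\mathsf{1}_Q=\sum_{R\subseteq Q}\Delta_Rb$ (Lemma \ref{haar}(6)) gives $\tfrac1{\nu(Q)}\int_Q|b-\langle b\rangle_Q|\,dx\le\sum_{R\subseteq Q}\|\Delta_Rb\|_{L^1}/\nu(Q)$, and it remains to prove the weighted dyadic Hardy inequality
\[
    \sum_{Q\in\mathcal D}\Big(\frac1{\nu(Q)}\sum_{R\subseteq Q}d_R\Big)^p\lesssim\sum_{R\in\mathcal D}\Big(\frac{d_R}{\nu(R)}\Big)^p,\qquad d_R\ge0.
\]
This is the crux. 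Its proof uses that $\nu\in A_2$ obeys a reverse Hölder inequality (Lemma \ref{reverse}), which upgrades to the decay $\nu(R)/\nu(Q)\lesssim(|R|/|Q|)^\theta$ for dyadic $R\subseteq Q$, some $\theta=\theta([\nu]_{A_2})>0$; summing over the ancestors of a fixed $R$ then yields $\sum_{Q\supseteq R}(\nu(R)/\nu(Q))^s\lesssim1$ for every $s>0$. For $0<p\le1$ one applies subadditivity $(\sum_{R\subseteq Q}d_R)^p\le\sum_{R\subseteq Q}d_R^p$ and interchanges the sums over $Q$ and $R$. For $1<p<\infty$ (with $p'=p/(p-1)$) one first applies Hölder's inequality in $R$, writing $\nu(R)/\nu(Q)=(\nu(R)/\nu(Q))^s(\nu(R)/\nu(Q))^{1-s}$ with $s\in(0,1/p)$ chosen so that $\sum_{R\subseteq Q}(\nu(R)/\nu(Q))^{(1-s)p'}\lesssim1$ (grouped by generation, the ratios $\nu(R)/\nu(Q)$ sum to $1$ and are each $\lesssim2^{-mn\theta}$, which forces $(1-s)p'>1$), and then interchanges the sums, the leftover ancestor sum $\sum_{Q\supseteq R}(\nu(R)/\nu(Q))^{sp}$ being $\lesssim1$. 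The main obstacle is precisely this Hardy inequality: extracting the geometric decay in generations from reverse Hölder and choosing $s$ so that both factors of the Hölder split converge. Assembling Steps 1--3 gives \eqref{eqn:equvi-character-seq} for all four expressions $R(Q)$ and all $0<p<\infty$.
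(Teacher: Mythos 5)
Your proposal is correct and follows essentially the same route as the paper: reduce the four choices of $R(Q)$ to $|Q|/\nu(Q)$ via Cauchy--Schwarz and the $A_2$ conditions, get one direction from the size/cancellation of the Haar functions, and get the other from exactly the paper's mechanism — subadditivity for $0<p\le1$ and, for $p>1$, a H\"older split of $\nu(R)/\nu(Q)$ into two powers whose geometric decay in generations makes both the descendant and ancestor sums converge. The only cosmetic differences are that you package the estimate as a dyadic Hardy inequality for $\|\Delta_Rb\|_{L^1}$ and derive the decay $\nu(R)/\nu(Q)\lesssim(|R|/|Q|)^\theta$ from reverse H\"older rather than from the doubling/$A_\infty$ property, which is an equivalent ingredient.
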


\begin{proof}
For each $Q\in \mathcal D$, it follows from H\"older's inequality that 
$$
    \frac{|Q|}{\lambda^{-1}(Q)^{\frac{1}{2}}\mu(Q)^{\frac{1}{2}}}
    \leq \frac{|Q|}{\nu(Q)}
    \approx  \frac{\nu^{-1}(Q)}{|Q|}
    \leq \frac{\lambda(Q)^{\frac{1}{2}}\mu^{-1}(Q)^{\frac{1}{2}}}{|Q|},
$$
where the ``$\approx$'' above holds by $\nu\in A_2$. 
Similarly, the last term above is approximately equal to the first one, due to $\mu,\lambda \in A_2$.  

Therefore, it suffices to prove \eqref{eqn:equvi-character-seq} for taking $\displaystyle R_Q=\frac{|Q|}{\nu(Q)}$.

{\it Step I.}
Note that 
\begin{align*}
   \frac{1}{\nu(Q)}\int_Q \left|b(x)-\langle b\rangle _Q\right|dx 
   &= \frac{1}{\nu(Q)} \int_Q \bigg| \sum_{\substack{P\in \mathcal{D},P\subset Q\\\epsilon\not\equiv1}} 
   \langle b, h^{\epsilon}_{P}\rangle h^{\epsilon}_{P} \bigg| dx\\
   &\leq   \sum_{\substack{P\in \mathcal{D},P\subset Q\\\epsilon\not\equiv1}}  
   \frac{|\langle b, h^{\epsilon}_{P}\rangle|}{\sqrt{|P|}} \frac{|P|}{\nu(P)}   \cdot \frac{\nu(P)}{\nu(Q)}.
\end{align*}
When $0<p\leq 1$, by using concavity property of $(\sum |a_k|)^p\leq \sum |a_k|^p$, we  see that 
\begin{align*}
    \|b\|_{\mathbf B_{\nu}^p(\mathbb R^n,\mathcal D)}^p &\leq \sum_{Q\in \mathcal D}   
    \sum_{\substack{P\in \mathcal{D},P\subset Q\\\epsilon\not\equiv1}}  
    \left(\frac{ |\langle b, h^{\epsilon}_{P}\rangle| }{\sqrt{|P|}} \frac{|P|}{\nu(P)} \right)^p  
    \cdot \left(\frac{\nu(P)}{\nu(Q)}\right)^p\\
    & \leq \sum_{ P\in \mathcal{D}:\, \epsilon\not\equiv1}  
    \bigg[\left(\frac{ |\langle b, h^{\epsilon}_{P}\rangle|}{\sqrt{|P|}} \frac{|P|}{\nu(P)} \right)^p  
    \sum_{Q\in \mathcal{D},P\subset Q}  \left(\frac{\nu(P)}{\nu(Q)}\right)^p\bigg]\\
    & \leq \sum_{ P\in \mathcal{D}:\, \epsilon\not\equiv1}  
    \left(\frac{ |\langle b, h^{\epsilon}_{P}\rangle |}{\sqrt{|P|}} \frac{|P|}{\nu(P)} \right)^p.
\end{align*}

When $1<p<\infty$, the doubling property of $\nu\in A_2$ ensures that there exists 
$\theta=\theta(n,[\nu]_{A_2})\in (0,1)$, such that for any $Q\in \mathcal D$ and  $P\subset Q$ 
with $\ell(P)=2^{-k}\ell(Q)$,  we have $\nu(P)\leq \theta^k \nu(Q)$. Note that for any $1<p<\infty$,  
we can choose some $\alpha\in (0,1)$ such that $\alpha  p' >1$.  Then
\begin{align*}
    \sum_{\substack{P\in \mathcal D,  \,  P\subset Q }}   \bigg({\nu(P)\over \nu(Q)} \bigg)^{\alpha  p'} 
    \lesssim \sum_{k=0}^\infty \sum_{\substack{P\in \mathcal D,\, P\subset Q \\  \ell(P)=2^{-k} \ell(Q)  }}    
    \theta^{(\alpha p'-1)k }\frac{\nu(P)}{\nu(Q)}\lesssim  \sum_{k=0}^\infty  \theta^{(\alpha  p'-1)k }\lesssim 1.
\end{align*} 
Based on this, we further have 
\begin{align*}
   \|b\|_{\mathbf B_{\nu}^p(\mathbb R^n,\mathcal D)}^p &\leq \sum_{Q\in \mathcal D}  
   \Bigg( \sum_{\substack{P\in \mathcal{D},\, P\subset Q\\\epsilon\not\equiv1}}   
   \frac{ |\langle b, h^{\epsilon}_{P}\rangle| }{\sqrt{|P|}} 
   \frac{|P|}{\nu(P)}    \cdot  \frac{\nu(P)}{\nu(Q)}  \Bigg)^p\\
   &\leq  \sum_{Q\in \mathcal D} \Bigg[   \bigg(\sum_{\substack{P\in \mathcal{D},\, P\subset Q\\\epsilon\not\equiv1}} 
   \bigg({\nu(P)\over \nu(Q)} \bigg)^{\alpha  p'}\bigg)^{p-1} \cdot 
   \sum_{\substack{P\in \mathcal{D},\, P\subset Q\\\epsilon\not\equiv1}}   
   \left(\frac{ |\langle b, h^{\epsilon}_{P}\rangle |}{\sqrt{|P|}} \frac{|P|}{\nu(P)} \right)^p 
   \bigg({\nu(P)\over \nu(Q)} \bigg)^{(1-\alpha )p}\Bigg] \\
   & \leq \sum_{ P\in \mathcal{D}:\, \epsilon\not\equiv1}  
   \left(\frac{ |\langle b, h^{\epsilon}_{P}\rangle |}{\sqrt{|P|}} \frac{|P|}{\nu(P)} \right)^p.
\end{align*}

{\it Step II.}
On the other hand, by the cancellation  and size properties of Haar functions, 
\begin{align*}
     \sum_{Q\in \mathcal D:\, \epsilon\not\equiv 1}   
     \left( \frac {|\langle b, h_Q^\epsilon  \rangle|}{ \sqrt{\lvert Q \rvert}} \frac{|Q| }  {\nu(Q)}   \right)^p 
     &=   \sum_{Q\in \mathcal D:\, \epsilon\not\equiv 1}   
     \left( \frac {|\langle b-\langle b\rangle _Q, h_Q^\epsilon  \rangle|}{ \sqrt{\lvert Q \rvert}} \frac{|Q| }  {\nu(Q)}   \right)^p\\
     &\lesssim \sum_{Q\in \mathcal D}  \left(\frac{1}{\nu(Q)}\int_Q |b(x)-\langle b\rangle _Q| \,dx\right)^p,
\end{align*}
as desired.   The proof is complete.
\end{proof}

As a consequence,
the dyadic Besov spaces have alternative definitions in terms of martingale differences.

\begin{proposition}  \label{prop:Martingale}
Suppose $0<p<\infty$, $\mu,\,\lambda\in A_2$ and set $\nu=\mu^{\frac{1}{2}}\lambda^{-\frac{1}{2}}$.
We have the equivalent (semi-)norms
\begin{align} \label{e:BDequiv}
     \|b\|_{ \mathbf B_{\nu}^p(\mathbb{R}^n, \mathcal D)} ^{p}  
      & \approx \sum_{Q\in \mathcal{D}}     \left( \frac{1}{ \nu(Q) }  
     \int_Q{|\Delta _Q b(x)| }  \,dx\right)^{p}\nonumber \\
    & \approx \sum_{Q\in \mathcal{D}}     \left( \frac{1}{ \mu(Q) }  
     \int_Q{|\Delta _Q b(x)|^2 } \lambda (x) \, dx\right)^{p\over 2}  \nonumber\\
     &\approx \sum_{Q\in \mathcal{D}}     \left( \frac{1}{ \lambda^{-1}(Q) }  
     \int_Q{|\Delta _Q b(x)|^2 } \mu ^{-1}(x) \,dx\right)^{p\over 2} ,
\end{align}
where the implicit constants are independent of $b$.
\end{proposition}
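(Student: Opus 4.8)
The plan is to reduce all three expressions in \eqref{e:BDequiv} to the Haar–coefficient quantity $\sum_{Q,\epsilon\not\equiv1}\big(|\langle b,h_Q^\epsilon\rangle|/|Q|^{1/2}\big)^pR(Q)^p$ of \eqref{eqn:equvi-character-seq}, using that for each fixed $Q$ the block $\Delta_Q b=\sum_{\epsilon\not\equiv1}a_Q^\epsilon h_Q^\epsilon$ (with $a_Q^\epsilon:=\langle b,h_Q^\epsilon\rangle$) involves only $N:=2^n-1$ Haar functions, all supported on $Q$. First I would record four elementary one-block facts: (i) from $\|h_Q^\epsilon\|_\infty=|Q|^{-1/2}$, $\|h_Q^\epsilon\|_1=|Q|^{1/2}$ and the triangle inequality, $\|\Delta_Q b\|_{L^\infty(Q)}\le|Q|^{-1/2}\sum_\epsilon|a_Q^\epsilon|$ and $\int_Q|\Delta_Q b|\le|Q|^{1/2}\sum_\epsilon|a_Q^\epsilon|$; (ii) pairing $\Delta_Q b$ against the unit-sup-norm function $|Q|^{1/2}h_Q^\epsilon$ and invoking orthonormality (Lemma~\ref{haar}), $\int_Q|\Delta_Q b|\ge|Q|^{1/2}|a_Q^\epsilon|$ for every $\epsilon$; (iii) Cauchy--Schwarz gives $\mu(Q)\mu^{-1}(Q)\ge|Q|^2$ and $\lambda(Q)\lambda^{-1}(Q)\ge|Q|^2$; (iv) since the index set $\{\epsilon\not\equiv1\}$ has fixed cardinality $N$, the quantities $\sum_\epsilon|a_Q^\epsilon|^p$, $\big(\sum_\epsilon|a_Q^\epsilon|^2\big)^{p/2}$ and $\big(\max_\epsilon|a_Q^\epsilon|\big)^p$ are mutually comparable with constants depending only on $n$ and $p$.

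The first line of \eqref{e:BDequiv} is then immediate. Combining (i) and (iv), $\big(\tfrac1{\nu(Q)}\int_Q|\Delta_Q b|\big)^p\lesssim\sum_\epsilon\big(\tfrac{|a_Q^\epsilon|}{|Q|^{1/2}}\cdot\tfrac{|Q|}{\nu(Q)}\big)^p$, while (ii) and (iv) give the reverse inequality; summing over $Q\in\mathcal D$ and invoking \eqref{eqn:equvi-character-seq} with $R(Q)=|Q|/\nu(Q)$ finishes it.

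For the second line I would argue in two directions. For ``$\lesssim\|b\|_{\mathbf B_\nu^p(\mathbb R^n,\mathcal D)}^p$'', fact (i) gives $\int_Q|\Delta_Q b|^2\lambda\le\|\Delta_Q b\|_{L^\infty(Q)}^2\lambda(Q)\lesssim|Q|^{-1}\lambda(Q)\sum_\epsilon|a_Q^\epsilon|^2$, so by (iv) and (iii), $\big(\tfrac1{\mu(Q)}\int_Q|\Delta_Q b|^2\lambda\big)^{p/2}\lesssim\sum_\epsilon\big(\tfrac{|a_Q^\epsilon|}{|Q|^{1/2}}\,R(Q)\big)^p$ with the admissible choice $R(Q)=\lambda(Q)^{1/2}\mu^{-1}(Q)^{1/2}/|Q|$, and \eqref{eqn:equvi-character-seq} closes this direction. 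For ``$\gtrsim\|b\|_{\mathbf B_\nu^p(\mathbb R^n,\mathcal D)}^p$'', Cauchy--Schwarz in the form $\int_Q|\Delta_Q b|\le\big(\int_Q|\Delta_Q b|^2\lambda\big)^{1/2}\lambda^{-1}(Q)^{1/2}$ gives $\tfrac1{\mu(Q)^{1/2}}\big(\int_Q|\Delta_Q b|^2\lambda\big)^{1/2}\ge\tfrac1{\mu(Q)^{1/2}\lambda^{-1}(Q)^{1/2}}\int_Q|\Delta_Q b|\gtrsim\tfrac1{\nu(Q)}\int_Q|\Delta_Q b|$, the last step using $\mu(Q)^{1/2}\lambda^{-1}(Q)^{1/2}\approx\nu(Q)$; raising to the power $p$, summing over $Q$, and quoting the already proven first line finishes the second line. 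The third line I would deduce from the second by the involution $\mu\mapsto\lambda^{-1}$, $\lambda\mapsto\mu^{-1}$, which fixes $\nu=\mu^{1/2}\lambda^{-1/2}$ and preserves the $A_2$ constants.

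The only step that is not pure bookkeeping is the weight comparability $\mu(Q)^{1/2}\lambda^{-1}(Q)^{1/2}\approx\nu(Q)$ (equivalently, the mutual comparability of all four candidates for $R(Q)$): ``$\le$'' is Cauchy--Schwarz, and ``$\gtrsim$'' is exactly where $\mu,\lambda\in A_2$ -- hence $\nu\in A_2$, so $\nu(Q)\nu^{-1}(Q)\approx|Q|^2$, together with the analogous identities for $\mu$ and $\lambda$ -- enters; but this is already established in the proof of \eqref{eqn:equvi-character-seq}. So in effect the proposition reduces entirely to the single-block Haar estimates (i)--(iv) together with Cauchy--Schwarz, and I anticipate no real obstacle beyond keeping the several admissible forms of $R(Q)$ straight.
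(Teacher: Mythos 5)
Your proposal is correct and takes essentially the same route as the paper: both arguments reduce each martingale-difference quantity cube-by-cube to the Haar-coefficient characterization \eqref{eqn:equvi-character-seq} with an admissible choice of $R(Q)$, exploiting the finite dimensionality of the block $\Delta_Q b$ and the $A_2$-comparability of the four weight ratios. The only cosmetic difference is in the weighted $L^2$ lines, where the paper obtains a two-sided per-cube equivalence from the constancy of $\Delta_Q b$ on the children of $Q$ (together with doubling), whereas you prove the upper bound via the $L^\infty$ bound and $|Q|^2\le\mu(Q)\mu^{-1}(Q)$ and route the lower bound through Cauchy--Schwarz, the comparability $\nu(Q)\approx\mu(Q)^{1/2}\lambda^{-1}(Q)^{1/2}$, and the already-proved first line --- both are valid.
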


\begin{proof} 
Note that for $Q\in \mathcal D$,
$$
    \frac{1}{\nu(Q)} \int_Q |\Delta_Q b(x) | \,dx \approx \sum_{\epsilon\not\equiv 1} 
    \frac {|\langle b, h_Q^\epsilon  \rangle|}{ \sqrt{\lvert Q \rvert}} \frac{|Q| }  {\nu(Q)},
$$
this is simply because all norms on a finite-dimensional space are equivalent 
(the space is $\ell^1$ over the children of $Q$ which has dimension $2^n$).

Hence, the first equivalence in \eqref{e:BDequiv} follows from \eqref{eqn:equvi-character-seq} 
with taking $R(Q)$ therin as $\displaystyle \frac{|Q|}{\nu(Q)}$.

Furthermore, note that the martingale difference $\Delta _Q b$ is constant on every child of $Q$. This yields that 
\begin{align*}
    \frac{1}{ \mu(Q) }  
     \int_Q{|\Delta _Q b(x)|^2 } \lambda (x)dx &\approx \frac{\lambda(Q)}{\mu(Q)|Q|}\int_Q |\Delta_Q b(x)|^2 dx\\
     &\approx    \sum_{\epsilon\not\equiv 1}  \frac{\lambda(Q)}{\mu(Q) |Q|}   
     \left(\frac {|\langle b, h_Q^\epsilon  \rangle|}{ \sqrt{\lvert Q \rvert}}\right)^2 |Q|\\
     &\approx  \sum_{\epsilon\not\equiv 1}  \left(    \frac {|\langle b, h_Q^\epsilon  \rangle|}{ \sqrt{\lvert Q \rvert}} 
     \frac{|Q|}{\lambda^{-1}(Q)^{1\over 2}\mu(Q)^{1\over 2}}\right)^2.
\end{align*}
Thus the second equivalence in \eqref{e:BDequiv} follows from \eqref{eqn:equvi-character-seq}
with taking $R(Q)$ therin as $\displaystyle \frac{|Q|}{\lambda^{-1}(Q)^{1\over 2}\mu(Q)^{1\over 2}}$.

The proof of the remaining third one is similar, and we skip it.
\end{proof}

The weighted Besov space is the intersection of a finite number of dyadic Besov spaces.  

\begin{theorem}\label{thm:Besov-Intersect}
Suppose $\nu\in A_2$ and $0< p<\infty$.
We have
$$ 
    \mathbf B_{\nu}^p(\mathbb{R}^n)
    =\bigcap_{\omega\in \left\{0,\frac{1}{3},\frac{2}{3}\right\}^n}  \mathbf B_{\nu}^p(\mathbb{R}^n, \mathcal D^\omega)
$$
with equivalent (semi-)norms
\begin{align*}
    \|b\|_{\mathbf B_\nu^{p}(\mathbb{R}^{n})}
    \approx  \sum_{\omega\in \left\{0,\frac{1}{3},\frac{2}{3}\right\}^n}\|b\|_{\mathbf B_{\nu}^{p}(\mathbb{R}^{n},\mathcal{D}^{\omega})}.
\end{align*}
\end{theorem}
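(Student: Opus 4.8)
The plan is to prove the two-sided norm estimate
$$
\|b\|_{\mathbf B_\nu^p(\mathbb R^n)} \;\lesssim\; \sum_{\omega}\|b\|_{\mathbf B_\nu^p(\mathbb R^n,\mathcal D^\omega)} \;\lesssim\; \|b\|_{\mathbf B_\nu^p(\mathbb R^n)},
$$
where the sum runs over the $3^n$ shifts $\omega\in\{0,\frac{1}{3},\frac{2}{3}\}^n$ and the implicit constants depend only on $n$, $p$ and $[\nu]_{A_2}$. The claimed set identity is then immediate, since finiteness of the middle expression is equivalent to finiteness of each of its finitely many terms. I fix once and for all the dyadic system $\mathcal D$ appearing in Definition \ref{def:Sequence}; because the middle expression does not involve $\mathcal D$, a byproduct of the argument is that $\mathbf B_\nu^p(\mathbb R^n)$ is independent of that initial choice. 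Two elementary tools are used throughout: (i) for any measurable set $E$ and any constant $c$, one has $\int_E|b-\langle b\rangle_E|\,dx\le 2\int_E|b-c|\,dx$; and (ii) the doubling property of $\nu\in A_2$ (Lemma \ref{double}), which gives $\nu(E)\approx\nu(E')$ whenever two cubes $E,E'$ overlap and have comparable Lebesgue measure, since then each is contained in a fixed dilate of the other with dilation factor controlled by $n$.

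For the right inequality I argue cube by cube. Given $P\in\mathcal D^\omega$, let $Q=Q(P)\in\mathcal D$ be the unique cube with $\ell(Q)\in[\ell(P),2\ell(P))$ whose half-open copy contains the center of $P$. A short $\ell^\infty$-distance computation gives $P\subset 20\sqrt n\,Q$ (indeed $P$ even lies in $2Q$), while $|P|\approx|20\sqrt n\,Q|$, so $\nu(P)\approx\nu(20\sqrt n\,Q)$ by (ii). Applying (i) on $E=P$ with $c=\langle b\rangle_{20\sqrt n\,Q}$ and then enlarging the domain of integration from $P$ to $20\sqrt n\,Q$ yields
$$
\frac{1}{\nu(P)}\int_P|b-\langle b\rangle_P|\,dx \;\lesssim\; \frac{1}{\nu(20\sqrt n\,Q)}\int_{20\sqrt n\,Q}|b-\langle b\rangle_{20\sqrt n\,Q}|\,dx \;=\; R_{Q(P)}.
$$
The correspondence $P\mapsto Q(P)$ is uniformly finite-to-one (for a fixed $Q$, only boundedly many $P$, in a bound depending on $n$, can satisfy $Q(P)=Q$). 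Raising to the $p$-th power, summing over $P\in\mathcal D^\omega$, and then over the $3^n$ grids, gives $\sum_\omega\|b\|_{\mathbf B_\nu^p(\mathbb R^n,\mathcal D^\omega)}^p\lesssim\|b\|_{\mathbf B_\nu^p(\mathbb R^n)}^p$, and the passage to $\sum_\omega\|b\|$ costs only a constant because there are finitely many $\omega$.

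For the left inequality I run the same scheme with the containment reversed. Given $Q\in\mathcal D$, let $B$ be the ball circumscribing $20\sqrt n\,Q$ and apply the adjacent-systems property \eqref{eqn:adj-prop}: there exist $\omega=\omega(Q)$ and $P=P(Q)\in\mathcal D^\omega$ with $20\sqrt n\,Q\subset B\subset P$ and $|P|\lesssim|B|\approx|20\sqrt n\,Q|$, whence $\nu(P)\approx\nu(20\sqrt n\,Q)$ by (ii). Applying (i) on $E=20\sqrt n\,Q$ with $c=\langle b\rangle_P$ and then enlarging the domain of integration from $20\sqrt n\,Q$ to $P$ gives $R_Q\lesssim\frac1{\nu(P)}\int_P|b-\langle b\rangle_P|\,dx$. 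Since $20\sqrt n\,Q\subset P$ forces $Q\subset P$ and $|Q|\approx|P|$, the map $Q\mapsto(\omega(Q),P(Q))$ is again uniformly finite-to-one, so summing $p$-th powers over $Q\in\mathcal D$ gives $\|b\|_{\mathbf B_\nu^p(\mathbb R^n)}^p=\sum_{Q\in\mathcal D}R_Q^p\lesssim\sum_\omega\|b\|_{\mathbf B_\nu^p(\mathbb R^n,\mathcal D^\omega)}^p$; as there are only $3^n$ shifts, this delivers $\|b\|_{\mathbf B_\nu^p(\mathbb R^n)}\lesssim\big(\sum_\omega\|b\|^p\big)^{1/p}\lesssim\sum_\omega\|b\|$ for every $0<p<\infty$, so the range $p<1$ poses no quasi-norm difficulty.

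I do not anticipate a real analytic obstacle: the statement is a weighted incarnation of the classical one-third trick, and most of the work is bookkeeping. The step most in need of care is the geometric matching — showing that every dilated cube $20\sqrt n\,Q$ of the reference grid both is contained in a comparable dyadic cube of some shifted grid (needed for the left inequality, supplied by \eqref{eqn:adj-prop}) and contains a comparable dyadic cube of some shifted grid (needed for the right inequality, supplied by the explicit cube $Q(P)$). That these two containments run in opposite directions is exactly why the dilation factor $20\sqrt n$ is attached to the cubes in Definition \ref{def:Sequence} rather than to the systems $\mathcal D^\omega$. The remaining points — that the two cube correspondences are uniformly finite-to-one, so that $\ell^p$-summability survives the transfer between grids, and that every constant depends only on $n$, $p$ and $[\nu]_{A_2}$ (in particular through the doubling exponent in Lemma \ref{double}) — are then routine.
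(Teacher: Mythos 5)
Your proposal is correct and follows essentially the same route as the paper's proof: both directions are handled by matching each (dilated) cube of one grid with a comparable cube of the other grid — $P\subset 20\sqrt n\,Q$ with $|P|\approx|20\sqrt n\,Q|$ in one direction, and $20\sqrt n\,Q\subset P$ via the adjacent-systems property \eqref{eqn:adj-prop} in the other — then using the doubling of $\nu$ to compare oscillations and the uniformly finite-to-one nature of the correspondence to sum $p$-th powers. Your write-up merely makes explicit the constant-swapping inequality and the multiplicity bounds that the paper leaves implicit.
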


\begin{proof}
Note that every adjacent dyadic system is constructed by shifting from the standard dyadic system; see Definition~\ref{def:adjacent-dyadic}. Hence
for any $\omega\in \left\{0,\frac{1}{3},\frac{2}{3}\right\}^n$ and $Q^\omega\in \mathcal D^\omega$,
there exists   $Q\in \mathcal D$ such that 
\begin{equation}\label{eqn:aux-comparable}
    Q^\omega \subset 20\sqrt{n}Q, \quad {\rm and }\quad |20\sqrt{n}Q|\leq C_n |Q^\omega|.
\end{equation}
Then the doubling property of $\nu\in A_2$ implies that 
\begin{align*}
    \frac{1}{ \nu(Q^\omega)}  \int_{Q^\omega} \left|b(x)-\langle b\rangle _{Q^\omega}\right| dx\lesssim \frac{1}{ \nu(20\sqrt{n}Q)}  \int_{20\sqrt{n}Q}  \left|b(x)- \langle b\rangle _{20\sqrt{n}Q}\right| dx .
\end{align*}
Moreover, by the size condition and the nestedness property of dyadic cubes, 
$$
     \sup_{Q\in \mathcal D} \#\big\{ Q^\omega\in  \mathcal D^\omega:\    Q^\omega {\rm satisfies}\   \eqref{eqn:aux-comparable}\big\} 
$$
is finite and only depends on $n$. Hence, $\|b\|_{\mathbf B_\nu^p(\mathbb R^n,\mathcal D^\omega)}\lesssim \|b\|_{\mathbf B_\nu^p(\mathbb R^n)}$.

On the other hand, the adjacent dyadic systems ensure that 
for any $Q\in \mathcal D$, there exists some $\omega\in \left\{0,\frac{1}{3},\frac{2}{3}\right\}^n$ and $\widetilde Q^\omega \in \mathcal D^\omega$ such that 
$$
     20\sqrt{n}Q\subset \widetilde Q^\omega , \quad |\widetilde Q^\omega|\leq C_n |20\sqrt{n}Q|.
$$
Similarly, the number of such each $20\sqrt{n}Q$ for $Q\in \mathcal D$ contained in a same $\widetilde Q^\omega$  is bounded uniformly, and 
$$
  \frac{1}{ \nu(20\sqrt{n}Q)}  \int_{20\sqrt{n}Q} \left|b(x)-\langle b\rangle _{20\sqrt{n}Q}\right| dx \lesssim \frac{1}{ \nu(\widetilde Q^\omega)}  \int_{\widetilde Q^\omega} \left|b(x)-\langle b\rangle_{\widetilde Q^\omega}\right| dx .
$$
Then
$$
\|b\|_{\mathbf B_\nu^{p}(\mathbb{R}^{n})}
    \lesssim \sum_{\omega\in \left\{0,\frac{1}{3},\frac{2}{3}\right\}^n}\|b\|_{\mathbf B_{\nu}^{p}(\mathbb{R}^{n},\mathcal{D}^{\omega})}.
$$
The proof is complete.
\end{proof}

Now we assure ourselves that our Besov spaces are contained in the VMO spaces.  

\begin{lemma}\label{lem:dyadic-Besov-VMO}
Suppose that $\nu\in A_2$ and $0< p<\infty$. Then 
$$
    \mathbf B_\nu ^p(\mathbb R^n,\mathcal D) \subset {\rm VMO}_\nu (\mathbb R^n, \mathcal D).
$$
\end{lemma}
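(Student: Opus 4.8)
The plan is to show that the three limit conditions (i), (ii), (iii) in Definition~\ref{vmo} hold for any $b\in\mathbf B_\nu^p(\mathbb R^n,\mathcal D)$, using the fact that membership in $\mathbf B_\nu^p(\mathbb R^n,\mathcal D)$ forces the sequence $\{N_Q\}_{Q\in\mathcal D}$ with $N_Q:=\nu(Q)^{-1}\int_Q|b-\langle b\rangle_Q|\,dx$ to be in $\ell^p$, and in particular to be a \emph{null sequence}: for every $\delta>0$, only finitely many cubes $Q\in\mathcal D$ satisfy $N_Q>\delta$. First I would record this consequence of $\ell^p$-summability. Then, since the dyadic BMO/VMO conditions in Definition~\ref{vmo} are phrased over dyadic cubes $Q\in\mathcal D$, each of (i)--(iii) reduces to the following assertion: the quantity $N_Q$ is small whenever $Q$ is ``extreme'' in the appropriate sense --- (i) $\ell(Q)\to 0$, (ii) $\ell(Q)\to\infty$, (iii) $Q$ escapes to spatial infinity, i.e.\ $Q\subset\mathbb R^n\setminus B(x_0,a)$ with $a\to\infty$.

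For condition (i): fix $\delta>0$. By the null-sequence property there are only finitely many $Q\in\mathcal D$ with $N_Q>\delta$; let $a_0>0$ be smaller than the side length of all of them (there are finitely many, so the minimum side length is positive). Then every $Q\in\mathcal D$ with $\ell(Q)<a_0$ has $N_Q\le\delta$, which is exactly (i). Conditions (ii) and (iii) are handled the same way: among the finitely many cubes with $N_Q>\delta$, each has finite side length and each is a bounded set, so choosing $a_0$ larger than all their side lengths gives (ii), and choosing $a_0$ so large that $B(x_0,a_0)$ contains all of them gives (iii) (any $Q\subset\mathbb R^n\setminus B(x_0,a)$ with $a\ge a_0$ is disjoint from that finite list, hence has $N_Q\le\delta$). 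One should note that the $\ell^p$-index set is $\mathcal D$, which is countable, so ``finitely many exceptional cubes'' is the correct and available statement; I would also remark that $\mathbf B_\nu^p(\mathbb R^n,\mathcal D)\subset{\rm BMO}_\nu(\mathbb R^n,\mathcal D)$ is immediate since $\sup_Q N_Q\le\|\{N_Q\}\|_{\ell^p}=\|b\|_{\mathbf B_\nu^p(\mathbb R^n,\mathcal D)}<\infty$, so $b$ does lie in the ambient space in which VMO is defined.

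I do not expect a serious obstacle here; the argument is essentially the observation that an $\ell^p$ sequence indexed by dyadic cubes is a null sequence, combined with the elementary fact that a finite collection of dyadic cubes is bounded both in scale (from above and below) and in location. The only point requiring a line of care is matching the supremum-over-dyadic-cubes formulation of Definition~\ref{vmo} with the sequence formulation of Definition~\ref{def:Besov-endpoint-Delta}: for (i) the supremum is over cubes of a fixed small side length $a$, and since $a\to 0$ we may restrict to $a<a_0$; similarly for (ii) and (iii). If one wanted the \emph{non-dyadic} statement $\mathbf B_\nu^p(\mathbb R^n)\subset{\rm VMO}_\nu(\mathbb R^n)$ promised in the introduction, it would follow by combining this lemma with Theorem~\ref{thm:Besov-Intersect} and the standard comparison \eqref{eqn:adj-prop} between arbitrary balls and adjacent dyadic cubes, together with the doubling of $\nu$; but for the present lemma that extra step is not needed.
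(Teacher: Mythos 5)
Your proposal is correct and is essentially the argument the paper intends: the paper first notes the trivial embedding into ${\rm BMO}_\nu(\mathbb R^n,\mathcal D)$ via $\sup_Q N_Q\le\|\{N_Q\}\|_{\ell^p}$ and then defers the three limiting conditions to the analogous argument of \cite[Lemma 3.4]{LLW2022}, which is exactly your observation that an $\ell^p$ sequence indexed by the countable family $\mathcal D$ has only finitely many terms above any $\delta>0$, so cubes of sufficiently small scale, sufficiently large scale, or sufficiently far from $x_0$ all have small oscillation. Your write-up simply makes that cited argument explicit, so no changes are needed.
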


\begin{proof}
Note that 
\begin{align*}
    \|b\|_{{\rm BMO}_\nu(\mathbb R^n,\mathcal D)}
    := & \sup_{Q\in \mathcal D}  \frac{1}{\nu(Q)} \int_Q |b(x)-\langle b\rangle_Q|  
       dx\leq \|b\|_{\mathbf B_\nu ^p(\mathbb R^n,\mathcal D)}
\end{align*}
for any  $0<p<\infty$.

Moreover, we are able to apply the analogous argument in \cite[Lemma 3.4]{LLW2022} 
to deduce that $\mathbf B_{\nu}^p(\mathbb R^n,\mathcal D)\subset {\rm VMO}_{v}(\mathbb R^n,\mathcal D)$ for $0< p<\infty$.
\end{proof}

\begin{remark}\label{remark B in VMO}
Note that from \cite{LPW},
$$
    {\rm BMO}_{\nu}(\mathbb R^n)
    =\bigcap_{\omega\in \left\{0,\frac{1}{3},\frac{2}{3}\right\}^n} {\rm BMO}_{\nu}(\mathbb R^n,\mathcal D^\omega)
    \quad {\rm and}\quad   
    {\rm VMO}_{\nu}(\mathbb R^n)
    =\bigcap_{\omega\in \left\{0,\frac{1}{3},\frac{2}{3}\right\}^n} {\rm VMO}_{\nu}(\mathbb R^n,\mathcal D^\omega) .
$$
Hence, by Lemma~\ref{lem:dyadic-Besov-VMO}, we have
$$
   \mathbf B_\nu^p(\mathbb R^n)\subset {\rm VMO}_{\nu}(\mathbb R^n)\quad {\rm for} \quad  0< p<\infty.
$$
Therefore,  it follows from \cite[Theorem 1.1]{LL2022} that when $b\in \mathbf B_\nu^p(\mathbb R^n)$, 
the commutator of the Riesz transform $[b,R_j]$ is compact from $L_\mu^2(\mathbb R^n)$ 
to $L_\lambda^2(\mathbb R^n)$, $j=1,2,\ldots, n$.
\end{remark}

\begin{remark}\label{rem:equiv-seq}
Suppose $0<p<\infty$, $\mu,\lambda\in A_2$ and set $\nu=\mu^{\frac{1}{2}}\lambda^{-\frac{1}{2}}$. 
We have the following equivalence for the Besov space $\mathbf B_{\nu}^p(\mathbb R^n)$:
\begin{align*}
    \|b\|_{\mathbf B_{\nu}^p(\mathbb R^n)}
    &\approx    
\bigg \|  \bigg\{\bigg[{1\over \mu(20\sqrt{n}Q)} \int_{20\sqrt{n}Q} 
    \left|b(x)- \langle b\rangle_{20\sqrt{n}Q}\right|^{2} \lambda(x) dx\bigg]^{1\over 2}
    \bigg\}_{Q\in\mathcal D}  \bigg \|_{ \ell^{p}}\\
    &\approx   
     \bigg \| \bigg\{ \bigg[\frac{1}{\lambda^{-1}(20\sqrt{n}Q)} 
    \int_{20\sqrt{n}Q} \left|b-\langle b\rangle_{20\sqrt{n}Q}\right|^{2} 
    \mu^{-1} (x) \, dx \bigg]^{1\over 2} \bigg\}_{Q\in\mathcal D}\bigg \|_{ \ell^{p}}.
\end{align*}
This is implied by the proof of Lemma~\ref{lem aux Sobolev new} below. See Remark~\ref{rem:equiv-Lorentz} for further results.
\end{remark}


\section{Schatten class estimate for $n<p<\infty$: proof of Theorem \ref{thm:main1}}\label{sec:thm1}
\setcounter{equation}{0}


\subsection{Sufficiency}  \label{sub:sufficiency}

We are studying the Schatten norms of the commutators in the case that $p$ is larger than the dimension $n$.  
We first address the sufficiency of the symbol being in the Besov space for the Schatten norm estimate.  

\begin{proposition}\label{prop:commut-Riesz-1}
Suppose  $n<  p<\infty$, $\mu,\,\lambda\in A_2$ and set $\nu=\mu^{\frac{1}{2}}\lambda^{-\frac{1}{2}}$.  
Let  $b\in \mathbf B_{\nu}^p(\mathbb R^n)$, then we have 
$$
      \left\|[b,R_j]\right\|_{S^p \big(L_{\mu}^2(\mathbb R^n), L_{\lambda}^2(\mathbb R^n)\big)}
      \lesssim \|b\|_{\mathbf B_{\nu}^p(\mathbb R^n)}.
$$ 
\end{proposition}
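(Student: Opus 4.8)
The plan is to reduce the Schatten estimate for $[b,R_j]$ to a Schatten estimate for dyadic paraproducts, using the Petermichl--Treil--Volberg representation of the Riesz transforms as averages of Haar shifts. First I would use the similarity transform that symmetrizes the problem with respect to the weights: conjugating the commutator $[b,R_j]\colon L^2_\mu\to L^2_\lambda$ by the multiplication operators $f\mapsto \mu^{1/2}f$ and $g\mapsto \lambda^{-1/2}g$ identifies $\|[b,R_j]\|_{S^p(L^2_\mu,L^2_\lambda)}$ with the $S^p(L^2,L^2)$-norm of the operator $M_{\lambda^{1/2}}[b,R_j]M_{\mu^{-1/2}}$, so it suffices to bound the latter by $\|b\|_{\mathbf B_\nu^p(\mathbb R^n)}$ with $\nu=\mu^{1/2}\lambda^{-1/2}$. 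Second, invoking \cite{PTV}, I would write $R_j$ as an average over dyadic systems $\mathcal D^\omega$ of dyadic shift operators; since $p>n\geq 2$ and the number of relevant $\omega$ is finite (there are $3^n$ of them), by the triangle inequality in $S^p$ and Theorem~\ref{thm:Besov-Intersect} it is enough to prove, for each fixed dyadic system $\mathcal D$, the bound
$$
    \big\|[b, \text{(dyadic shift)}]\big\|_{S^p(L^2_\mu,L^2_\lambda)}\lesssim \|b\|_{\mathbf B_\nu^p(\mathbb R^n,\mathcal D)}.
$$

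Next I would decompose the commutator with a dyadic shift into finitely many paraproduct-type pieces. Expanding $b$ and $f$ in the Haar basis and commuting, one obtains the paraproduct $\Pi_b^{\mathcal D}$ from \eqref{e:Paraproduct}, its adjoint $(\Pi_b^{\mathcal D})^*$, and remainder terms of the same structural type (shifted Haar coefficients, non-cancellative averages paired against $\mathsf 1_Q/|Q|$), all controlled analogously. The main analytic input is then Proposition~\ref{prop:para-Besov}: $\|\Pi_b^{\mathcal D}\|_{S^p(L^2_\mu,L^2_\lambda)}\approx\|b\|_{\mathbf B_\nu^p(\mathbb R^n,\mathcal D)}$. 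For the direction needed here (the $\lesssim$), I would write, after the symmetrizing conjugation, $\Pi_b^{\mathcal D}$ in the form $\sum_{Q,\epsilon}\langle b,h_Q^\epsilon\rangle\, \langle\cdot, e_Q^{(1)}\rangle f_Q^{(2)}$ where, using Proposition~\ref{prop:Martingale}, the scalar attached to $Q$ is comparable to $\big(\tfrac{1}{\mu(Q)}\int_Q|\Delta_Q b|^2\lambda\big)^{1/2}$ times something bounded, and the functions $e_Q^{(1)}\sim \mu^{1/2}\mathsf 1_Q/\mu(Q)^{1/2}$, $f_Q^{(2)}\sim \lambda^{-1/2}h_Q^\epsilon/(\dots)$ are NWO sequences: each is supported on $Q$ and, because $\mu,\lambda\in A_2$ satisfy a reverse Hölder inequality (Lemma~\ref{reverse}), one checks the $L^r$ bound $\|e_Q\|_r\lesssim|Q|^{1/r-1/2}$ for some $r>2$, so Lemma~\ref{lem:NWO} applies. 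Then \eqref{eq-NWO1} gives
$$
    \|\Pi_b^{\mathcal D}\|_{S^p(L^2_\mu,L^2_\lambda)}\lesssim\Big\|\Big\{\big(\tfrac{1}{\mu(Q)}\textstyle\int_Q|\Delta_Q b|^2\lambda\big)^{1/2}\Big\}_{Q}\Big\|_{\ell^p}\approx\|b\|_{\mathbf B_\nu^p(\mathbb R^n,\mathcal D)},
$$
the last step by Proposition~\ref{prop:Martingale}. Summing over the finitely many dyadic systems and using Theorem~\ref{thm:Besov-Intersect} yields the continuous estimate.

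The step I expect to be the main obstacle is verifying that the naturally-arising function systems are genuinely NWO in the \emph{two-weight} setting — that is, confirming that the weighted building blocks $\mu^{1/2}\mathsf 1_Q/\mu(Q)^{1/2}$ (and their $\lambda^{-1/2}$-counterparts, as well as the mixed objects coming from the shift's displacement) obey a uniform $L^r$-normalization with $r>2$. This is where the reverse Hölder inequality for $A_2$ weights (Lemma~\ref{reverse}) and the doubling property (Lemma~\ref{double}) enter decisively, and one must be careful that the exponent $r$ and the implicit constants depend only on $[\mu]_{A_2},[\lambda]_{A_2},n$. A secondary technical point is bookkeeping the remainder terms in the commutator-with-shift expansion so that each is dominated by a paraproduct of the form already handled; this is routine but must be done for all $2^n-1$ Haar signatures and both orders of composition. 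The passage from $n<p$ to general $p$ plays only a mild role here (it guarantees $p\geq 2$ so that $S^p$ is a genuine norm and the triangle inequality over the $3^n$ systems is available); the genuine $p>n$ cut-off is exploited elsewhere, not in this sufficiency direction.
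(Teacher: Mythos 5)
Your proposal follows essentially the same route as the paper: symmetrizing conjugation by $\lambda^{1/2}$ and $\mu^{-1/2}$, the Petermichl--Treil--Volberg dyadic shift representation, decomposition of $[b,\Sha]$ into paraproducts (plus the shift-displacement remainder), and NWO sequences built from $\mu^{1/2}\mathsf 1_Q/\mu(Q)^{1/2}$-type blocks verified via the reverse H\"older inequality, concluded with the Rochberg--Semmes bound \eqref{eq-NWO1} and the dyadic-to-continuous comparison. The only small imprecision is that the PTV representation averages over a random (continuum) family of dyadic grids rather than just the $3^n$ shifted systems, but since the dyadic Besov norm for any grid is dominated by $\|b\|_{\mathbf B_\nu^p(\mathbb R^n)}$, this does not affect the argument.
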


We depend on the work of  Petermichl, Treil and Volberg \cite{PTV}. 
They have shown that Riesz transforms are averages of relatively simple dyadic shifts.  
Given a dyadic system $\mathcal D$ with Haar basis $\{h_Q^\epsilon\}$, 
let $\sigma:\, \mathcal D\to \mathcal D$ satisfying $|\sigma(Q)|=2^{-n}|Q|$ for all $Q\in \mathcal D$. 
Using the same notation for a map 
$\sigma:\, \{0,1\}^n\setminus \{1\}^n \to (\{0,1\}^n\setminus \{1\}^n)\bigcup \{0\}$, 
if $\sigma(\epsilon)=0$ then set $h^{\sigma(\epsilon)}:=0$. 
The resulting dyadic shift operator is 
$$
  \Sha  f(x):=\sum_{Q\in\mathcal D,\epsilon\not\equiv1}\langle f, h^{\epsilon}_{Q} \rangle h^{\sigma(\epsilon)}_{\sigma(Q)}(x).
$$
It is known that for any $w\in A_2$, $\|\Sha\|_{L_w^2(\mathbb R^n)\to L_w^2(\mathbb R^n)}\lesssim 1$. 
Moreover, the Riesz transforms are in the convex hull of the class of operators $\Sha$, see \cite{PTV}. 
This result involves a random choice of dyadic systems, which implies that whenever considering the commutator $[b,R_j]$, 
it suffices to prove that the norms of $[b,\Sha]$ associated to different dyadic systems $\mathcal D$, 
are uniformly controlled with respect to the choice of $\mathcal D$.  
 
The commutator with the Haar shift operator $\Sha$ can be represented in terms of the paraproducts and $\Sha$:
$$
    [b,\Sha]f=\left(\Pi_b^{\mathcal D} +\Pi_b^{*\mathcal D} +\Gamma_b^{\mathcal D}\right) (\Sha f)
    -\Sha\left(\Pi_b^{\mathcal D } +\Pi_b^{*\mathcal D}+\Gamma_b^{\mathcal D}\right)f 
    +\Pi_{\Sha f}^{\mathcal D} b-\Sha(\Pi_f^{\mathcal D}b),
$$
where  
$$
   \Pi_b^{\mathcal D} f
   :=\sum_{Q\in \mathcal D,\ \epsilon\not \equiv 1} 
   \langle b, h_Q^\epsilon\rangle  \langle f\rangle_Q h_Q^\epsilon,
  \qquad 
   \Pi_b^{*\mathcal D} f
   :=\sum_{Q\in \mathcal D,\ \epsilon\not\equiv 1} 
   \langle b,h_Q^\epsilon\rangle \langle f, h_Q^\epsilon \rangle \frac{\mathsf{1}_Q}{|Q|},
$$
and 
$$
    \Gamma_b^{\mathcal D} f
    :=\sum_{Q\in \mathcal D}\sum_{\epsilon,\eta\not\equiv 1,\, \epsilon\not\equiv \eta} 
    \langle b,h_Q^\epsilon\rangle \langle f, h_Q^\eta\rangle h_Q^\epsilon  h_Q^\eta.
$$
See \cite{HLW2017} for details.
The point of this expansion is that it gives us a finite sum of paraproducts composed with $L^2$ bounded operators. 
To give one example, we have 
\begin{align*}
    \left \lVert \Pi_b^{\mathcal D}  (\Sha \cdot ) \right \rVert
     _{S^p \big(L_{\mu}^2(\mathbb R^n) , L_{\lambda}^2(\mathbb R^n)\big)}
     & \lesssim  \left\lVert  \Pi_b^{\mathcal D}  ( \cdot ) \right\rVert 
     _{S^p \big(L_{\mu}^2(\mathbb R^n) , L_{\lambda}^2(\mathbb R^n)\big)} 
     \cdot    \left \lVert \Sha \right \rVert_{L_{\mu}^2(\mathbb R^n)\to L_{\mu}^2(\mathbb R^n)}\\
     & \lesssim  \left \lVert \Pi_b^{\mathcal D}  ( \cdot )  \right\rVert
      _{S^p \big(L_{\mu}^2(\mathbb R^n) , L_{\lambda}^2(\mathbb R^n)\big)} . 
\end{align*}
This depends upon the condition that $\mu  \in A_2$. 
If the operator $\Sha$ comes \textit{before} the paraproduct, 
a similar inequality holds, and we use the condition that $\lambda \in A_2$.  
As a consequence, we have 
\begin{align*}
    \left\| [b,\Sha]  \right \|_{S^p\big(L_{\mu}^2(\mathbb R^n) , L_{\lambda}^2(\mathbb R^n)\big)} 
    \lesssim &  \big \|\lambda^{\frac{1}{2}}\Pi_b^{\mathcal D}\mu^{-\frac{1}{2}}\big \|_{S^p(L^2(\mathbb R^n), L^2(\mathbb R^n))} 
    +\big \|\lambda^{\frac{1}{2}}\Pi_b^{* \mathcal D}\mu^{-\frac{1}{2}}\big \| _{S^p(L^2(\mathbb R^n), L^2(\mathbb R^n))}\\[4pt]
    & +  \big \|\lambda^{\frac{1}{2}}\Gamma_b^{\mathcal D}\mu^{-\frac{1}{2}}\big \|_{S^p(L^2(\mathbb R^n), L^2(\mathbb R^n))}  
    +  \big \|\lambda^{\frac{1}{2}}\mathcal R_b^{\mathcal D}\mu^{-\frac{1}{2}}\big \|_{S^p(L^2(\mathbb R^n), L^2(\mathbb R^n))},
\end{align*}
where 
\begin{equation}\label{e:R}
    \mathcal  R_b^{\mathcal D} f:=\Pi_{\Sha f}^{\mathcal D} b-\Sha(\Pi_f^{\mathcal D}b).
\end{equation}

To continue, we have to estimate the $S^p(L^2(\mathbb R^n) , L^2(\mathbb R^n))$ norm for each of the four terms above.
We address three of them here. 

\begin{proposition}\label{prop:para-Besov}
Suppose  $n<  p<\infty$, $\mu,\,\lambda\in A_2$ and set $\nu=\mu^{\frac{1}{2}}\lambda^{-\frac{1}{2}}$. 
Let  $b\in {\rm VMO}_{\nu}(\mathbb R^n)$. 
Then $\lambda^{\frac{1}{2}}\Pi_b^{\mathcal D} \mu^{-\frac{1}{2}}$, 
$\lambda^{\frac{1}{2}}\Pi_b^{*\mathcal D}\mu^{-\frac{1}{2}}$ and 
$\lambda^{\frac{1}{2}}\Gamma_b^{\mathcal D}\mu^{-\frac{1}{2}}$ 
belong to $S^p(L^2(\mathbb R^n), L^2(\mathbb R^n))$ respectively, 
if and only if $b\in \mathbf B_\nu^p(\mathbb R^n, \mathcal D)$, that is,
\begin{align}
    \big \|\lambda^{1\over 2}\Pi_b^{\mathcal D}\mu^{-{1\over 2}}\big \|_{S^p(L^2(\mathbb R^n), L^2(\mathbb R^n))}  
    \approx &\, \|b\|_{\mathbf B_\nu^p(\mathbb R^n, \mathcal D)},\label{eqn:paraproduct-1}\\[4pt]
   \big \|\lambda^{1\over 2}\Pi_b^{* \mathcal D}\mu^{-{1\over 2}}\big \|_{S^p(L^2(\mathbb R^n), L^2(\mathbb R^n))}  
   \approx & \, \|b\|_{\mathbf B_\nu^p(\mathbb R^n, \mathcal D)},   \label{eqn:paraproduct-*}  \\[4pt]
   \big \|\lambda^{1\over 2}\Gamma_b^{\mathcal D}\mu^{-{1\over 2}}\big \|_{S^p(L^2(\mathbb R^n), L^2(\mathbb R^n))}  
   \approx &\,  \|b\|_{\mathbf B_\nu^p(\mathbb R^n, \mathcal D)} \label{eqn:Gamma}
\end{align}
with the implicit constants depending only on $[\mu]_{A_2}$, $[\lambda]_{A_2}$ and $n$, 
regardless of the choice of $\mathcal D$.
\end{proposition}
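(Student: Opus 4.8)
The plan is to establish the three equivalences \eqref{eqn:paraproduct-1}, \eqref{eqn:paraproduct-*} and \eqref{eqn:Gamma} simultaneously, via the Rochberg--Semmes machinery of \S\ref{s:Schatten}. The first move is to push the two weights out of the ambient space: since $g\mapsto\mu^{-1/2}g$ and $f\mapsto\lambda^{1/2}f$ are isometric isomorphisms of $L^2(\mathbb R^n)$ onto $L_\mu^2(\mathbb R^n)$ and of $L_\lambda^2(\mathbb R^n)$ onto $L^2(\mathbb R^n)$, each of $\lambda^{1/2}\Pi_b^{\mathcal D}\mu^{-1/2}$, $\lambda^{1/2}\Pi_b^{*\mathcal D}\mu^{-1/2}$, $\lambda^{1/2}\Gamma_b^{\mathcal D}\mu^{-1/2}$ is an operator \emph{on $L^2(\mathbb R^n)$} that is unitarily equivalent to the corresponding paraproduct viewed as a map $L_\mu^2\to L_\lambda^2$, and hence shares all its singular values. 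Expanding the definitions from \S\ref{sec:thm1} and peeling off averages and Haar pairings, every one of these model operators takes the Rochberg--Semmes form
$$
  T=\sum_{Q\in\mathcal D,\ \epsilon\not\equiv1}\lambda_{Q,\epsilon}\,\langle\,\cdot\,,e_{Q,\epsilon}\rangle f_{Q,\epsilon},
  \qquad
  \lambda_{Q,\epsilon}=\frac{|\langle b,h_Q^\epsilon\rangle|}{\sqrt{|Q|}}\cdot\frac{\lambda(Q)^{1/2}\mu^{-1}(Q)^{1/2}}{|Q|}
$$
(with one harmless extra summation index $\eta$ when $T=\lambda^{1/2}\Gamma_b^{\mathcal D}\mu^{-1/2}$), where the $e_{Q,\epsilon},f_{Q,\epsilon}$ are $L^2$-normalised functions supported on $Q$: a normalised multiple of $\mu^{-1/2}\mathsf 1_Q$ (resp.\ $\mu^{-1/2}h_Q^\epsilon$) on the input side, and a normalised multiple of $\lambda^{1/2}h_Q^\epsilon$, $\lambda^{1/2}\mathsf 1_Q$ or $\lambda^{1/2}h_Q^\epsilon h_Q^\eta$ on the output side. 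By Proposition~\ref{prop:Martingale}, equivalently \eqref{eqn:equvi-character-seq} with $R(Q)=\lambda(Q)^{1/2}\mu^{-1}(Q)^{1/2}/|Q|$, together with the finiteness of $\{0,1\}^n\setminus\{1\}^n$, we get $\|\{\lambda_{Q,\epsilon}\}\|_{\ell^p}\approx\|b\|_{\mathbf B_\nu^p(\mathbb R^n,\mathcal D)}$, so the whole proposition reduces to matching $\|T\|_{S^p(L^2(\mathbb R^n),L^2(\mathbb R^n))}$ with $\|\{\lambda_{Q,\epsilon}\}\|_{\ell^p}$.

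The ``$\lesssim$'' halves follow by checking that all the families above are NWO and invoking \eqref{eq-NWO1}. Each candidate test function is, after $L^2$-normalisation, of the shape $w^{1/2}\psi_Q/\|w^{1/2}\psi_Q\|_2$ with $w$ one of the $A_2$ weights $\mu,\lambda,\mu^{-1},\lambda^{-1}$ and $|\psi_Q|$ a constant multiple of $\mathsf 1_Q$ on $Q$; choosing $r=2(1+\sigma_w)\in(2,\infty)$ and applying the reverse Hölder inequality (Lemma~\ref{reverse}) shows that its $L^r(\mathbb R^n)$-norm is $\lesssim\bigl(w(Q)^{-r/2}\int_Q w^{r/2}\bigr)^{1/r}\lesssim|Q|^{1/r-1/2}$, which is exactly the hypothesis of Lemma~\ref{lem:NWO}; the relevant constants depend only on $[\mu]_{A_2}$, $[\lambda]_{A_2}$ and $n$. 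Splitting $T$ into the finitely many pieces obtained by fixing $\epsilon$ (and $\eta$), applying \eqref{eq-NWO1} to each, and summing (the triangle inequality in $S^p$ being available since $p>n\geq2$), we obtain $\|T\|_{S^p(L^2(\mathbb R^n),L^2(\mathbb R^n))}\lesssim\|\{\lambda_{Q,\epsilon}\}\|_{\ell^p}\approx\|b\|_{\mathbf B_\nu^p(\mathbb R^n,\mathcal D)}$; in particular $T\in S^p$ whenever $b\in\mathbf B_\nu^p(\mathbb R^n,\mathcal D)$.

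For the ``$\gtrsim$'' halves I would assume $T\in S^p(L^2(\mathbb R^n),L^2(\mathbb R^n))$ — so $T$ is compact and bounded — and apply Lemma~\ref{eq-NWO}; the key is to test $T$ against a \emph{new} NWO pair, adapted to the operator, which diagonalises it exactly. For $T=\lambda^{1/2}\Pi_b^{\mathcal D}\mu^{-1/2}$ take $\widetilde e_Q$ a normalised multiple of $\mu^{-1/2}\mathsf 1_Q$ (so $\langle\mu^{-1/2}\widetilde e_Q\rangle_Q\neq0$) and $\widetilde f_Q$ a normalised multiple of $\lambda^{-1/2}h_Q^\epsilon$ (so that $\lambda^{1/2}\widetilde f_Q$ is a scalar multiple of the single Haar function $h_Q^\epsilon$); then $\langle\Pi_b^{\mathcal D}(\mu^{-1/2}\widetilde e_Q),h_Q^\epsilon\rangle=\langle b,h_Q^\epsilon\rangle\langle\mu^{-1/2}\widetilde e_Q\rangle_Q$ by the weight-free Haar orthonormality, and the $A_2$ identity $\lambda(Q)^{1/2}\lambda^{-1}(Q)^{1/2}\approx|Q|$ turns the resulting scalar into $\lambda_{Q,\epsilon}$:
$$
  \Bigl|\bigl\langle(\lambda^{1/2}\Pi_b^{\mathcal D}\mu^{-1/2})\widetilde e_Q,\widetilde f_Q\bigr\rangle\Bigr|
  =\frac{|\langle b,h_Q^\epsilon\rangle|}{\sqrt{|Q|}}\cdot\frac{\mu^{-1}(Q)^{1/2}}{\lambda^{-1}(Q)^{1/2}}\approx\lambda_{Q,\epsilon}.
$$
For $T=\lambda^{1/2}\Pi_b^{*\mathcal D}\mu^{-1/2}$ take instead $\widetilde e_Q\propto\mu^{1/2}h_Q^\epsilon$ and $\widetilde f_Q\propto\lambda^{1/2}\mathsf 1_Q$, and for $T=\lambda^{1/2}\Gamma_b^{\mathcal D}\mu^{-1/2}$ take $\widetilde e_Q\propto\mu^{1/2}h_Q^\eta$ and $\widetilde f_Q\propto\lambda^{-1/2}h_Q^\epsilon h_Q^\eta$, using also the elementary identity $\int_Q h_Q^{\epsilon'}h_Q^\epsilon(h_Q^\eta)^2=|Q|^{-1}\delta_{\epsilon',\epsilon}$; each such pair is again NWO by the reverse-Hölder estimate above. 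Since $|\langle T\widetilde e_Q,\widetilde f_Q\rangle|\approx\lambda_{Q,\epsilon}$ in every case, summing over $Q$ and over the finitely many $\epsilon$ (and $\eta$) and applying Lemma~\ref{eq-NWO} yields $\|b\|_{\mathbf B_\nu^p(\mathbb R^n,\mathcal D)}^p\approx\sum_{Q,\epsilon}\lambda_{Q,\epsilon}^p\lesssim\|T\|_{S^p(L^2(\mathbb R^n),L^2(\mathbb R^n))}^p$, finishing all three equivalences.

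The step I expect to be the main obstacle is the lower bound, and within it the realisation that one must \emph{not} test $T$ against the $e_{Q,\epsilon},f_{Q,\epsilon}$ coming from its own NWO representation: doing so leaves the uncontrolled off-diagonal contributions $\int h_{Q'}^{\epsilon'}h_Q^\epsilon\,w$, which do not vanish for nested $Q,Q'$ because $w$ is not constant. Twisting one side by $w^{\mp1/2}$ so that genuine (unweighted) Haar orthonormality — rather than $w$-weighted orthogonality — performs the cancellation is what makes the diagonalisation exact, and the freedom to interchange the four equivalent expressions for $R(Q)$ in \eqref{eqn:equvi-character-seq} is exactly what reconciles the different normalising constants produced on the input and output sides. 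The reverse-Hölder NWO verifications and the finite bookkeeping over $\epsilon$ and $\eta$ are then routine.
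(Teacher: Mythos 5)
Your proposal is correct and follows essentially the same route as the paper: represent each weighted paraproduct in Rochberg--Semmes form with coefficients comparable to the sequence in \eqref{eqn:equvi-character-seq}, verify the NWO property via the reverse H\"older inequality (Lemma~\ref{reverse} with Lemma~\ref{lem:NWO}), use \eqref{eq-NWO1} for the upper bounds, and for the lower bounds test against weight-twisted NWO pairs (e.g.\ $\mu^{1/2}\mathsf 1_Q/\mu(Q)^{1/2}$ and $\lambda^{-1/2}h_Q^\epsilon|Q|^{1/2}/\lambda^{-1}(Q)^{1/2}$) so that unweighted Haar orthogonality diagonalises the operator, then apply Lemma~\ref{eq-NWO}. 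The only cosmetic deviation is that you prove \eqref{eqn:paraproduct-*} directly with explicit test functions, where the paper simply invokes duality with \eqref{eqn:paraproduct-1}; both are fine.
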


For our main theorem, we only need the upper bounds, or sufficiencies, above. 
We include the lower bounds for completeness. 

\begin{proof}
Both directions will depend on the nearly weakly orthogonal sequence approach of Rochberg and Semmes. 

\smallskip
  
\noindent {\bf Sufficiency for \eqref{eqn:paraproduct-1}:} 
Suppose $b\in \mathbf B_\nu^p(\mathbb R^n,\mathcal D)$.  
It follows from the definition of $\Pi_b^{\mathcal D}$ that 
\begin{align*}
    \lambda^{1\over 2}\Pi_b^{\mathcal D}\mu^{- {1\over 2}}(f)(x)  
    =& \sum_{Q\in \mathcal D,\epsilon\not\equiv 1}  \lambda^{1\over 2}(x) \langle b,h_Q^\epsilon\rangle h_Q^\epsilon(x) 
    \frac{1}{|Q|}\int_{\mathbb R^n} \mathsf{1}_Q(y) \mu^{-  {1\over 2}} (y) f(y) dy\\
    =& \sum_{Q\in \mathcal D,\epsilon\not\equiv 1}  \frac{\langle b, h_Q^\epsilon\rangle 
    \lambda(Q)^{1\over 2} \mu^{-1}(Q)^{1\over 2}}{|Q|^{ {1 \over 2} +1 }} \cdot 
    \frac{\lambda^{\frac{1}{2}}(x) |Q|^{1\over 2} h_Q^\epsilon (x)}{\lambda(Q)^{1\over 2}} \cdot 
    \left\langle  f, \frac{\mu^{-{1\over 2}}(y) 
    \mathsf{1}_Q(y) }{\mu^{-1}(Q)^{\frac{1}{2}}}\right\rangle.
\end{align*}
Denote
$$
    G_Q(x):=\frac{\lambda^{\frac{1}{2}}(x) |Q|^{1\over 2} h_Q^\epsilon(x) }{\lambda(Q)^{1\over 2}},
    \quad\textnormal{and}\quad 
    H_Q(x):=\frac{\mu^{-{1\over 2}}(x)   \mathsf{1}_Q(x)}{\mu^{-1}(Q)^{\frac{1}{2}}}.
$$

Using the reverse H\"{o}lder inequality for $A_2$ weights in Lemma \ref{reverse}, 
both $\{G_{Q}\}_{Q\in \mathcal D}$ and $\{H_{Q}\}_{Q\in \mathcal D}$ are NWO sequences for $L^2(\mathbb R^n)$, 
which gives using \eqref{eq-NWO1} that 
$$
    \big \|\lambda^{1\over 2}\Pi_b^{\mathcal D}\mu^{-{1\over 2}}
    \big \|_{S^p(L^2(\mathbb R^n), L^2(\mathbb R^n))} 
    \lesssim \|b\|_{\mathbf B_\nu^p(\mathbb R^n,\mathcal D)}.
$$

\smallskip

\noindent{\bf Necessity  for \eqref{eqn:paraproduct-1}:} Suppose $\lambda^{1\over 2}\Pi_b^{\mathcal D}\mu^{-{1\over 2}}\in   
S^p(L^2(\mathbb R^n), L^2(\mathbb R^n))$. Note that for each $Q\in \mathcal D$ and 
$\epsilon\not\equiv 1$,  $\langle b, h_Q^\epsilon\rangle =\langle \Pi_b^{\mathcal D} (\mathsf{1}_Q), h_Q^\epsilon\rangle $. 
Then
$$
    \sum_{Q\in \mathcal D,\, \epsilon\not\equiv 1}   \left|  \frac{\langle b, h_Q^\epsilon\rangle}{|Q|^{1/2}} 
    \frac{\lambda(Q)^{\frac{1}{2}}  \mu^{-1}(Q) ^{\frac{1}{2}}}{ |Q|} \right|^p  
    =  \sum_{Q\in \mathcal D,\, \epsilon\not\equiv 1}  \left|  \left\langle \lambda^{1\over 2} 
    \Pi_b^{\mathcal D} \mu^{- {1\over 2}} \Big(\frac{\mu^{1\over 2}
    \mathsf{1}_Q   }{\mu(Q)^{1\over 2}} \Big), \,
    \frac{\lambda^{-{1\over 2}} |Q|^{1\over 2}h_Q^\epsilon}{\lambda^{-1}(Q)^{1\over 2}}\right\rangle \right|^p.
$$
Denote
$$
    G^{'}_Q(x):=\frac{\mu^{1\over 2}(x)  \mathsf{1}_Q(x)}{\mu(Q)^{1\over 2}} ,
    \quad H^{'}_Q(x):=\frac{\lambda^{-{1\over 2}}(x) |Q|^{1\over 2}h_Q^\epsilon(x)}{\lambda^{-1}(Q)^{1\over 2}}.
$$
Similarly, these two collections of functions are also NWOs.  Since $1<p<\infty$, by Lemma \ref{eq-NWO} it holds that
$$
    \|b\|_{\mathbf B_\nu^p(\mathbb R^n,\mathcal D)}^p\lesssim  \big \|\lambda^{1\over 2}
    \Pi_b^{\mathcal D}\mu^{-{1\over 2}}\big \|_{S^p(L^2(\mathbb R^n), L^2(\mathbb R^n))}^p .
$$
That is, we have established \eqref{eqn:paraproduct-1}. The second estimate \eqref{eqn:paraproduct-*} is dual to the first, 
so it holds.

\smallskip

It remains to prove the last estimate \eqref{eqn:Gamma}.

\smallskip

\noindent {\bf Sufficiency for \eqref{eqn:Gamma}:} Suppose $b\in \mathbf  B_{\nu}^p(\mathbb R^n,\mathcal D)$. 
By the definition of $\Gamma_b^{\mathcal D}f$, we have 
\begin{align*}
    &\left(\lambda^{1\over 2}\Gamma_b^{\mathcal D}\mu^{-{1\over 2}}\right)f(x) \\
    =&\sum_{Q\in \mathcal D}\sum_{\epsilon,\eta\not\equiv 1,\, \epsilon\not\equiv \eta} 
    \langle b,h_Q^\epsilon  \rangle \left \langle \mu^{-{1\over 2}} f, h_Q^\eta \right\rangle
    h_Q^\epsilon(x) h_Q^\eta(x)\lambda^{1\over 2}(x)\\
    =&\sum_{Q\in \mathcal D}\sum_{\epsilon,\eta\not\equiv 1,\, \epsilon\not\equiv \eta} 
    \frac{\langle b, h_Q^\epsilon\rangle \lambda(Q)^{1\over 2}\mu^{-1}(Q)^{1\over 2}}{|Q|^{{1\over 2}+1}} 
    \cdot \frac{h_Q^\epsilon (x) h_Q^\eta(x) \lambda^{1\over 2}(x) |Q|}{\lambda(Q)^{1\over 2}} \cdot \left\langle f, \frac{\mu^{-{1\over 2}} 
    h_Q^\eta |Q|^{1\over 2}}{\mu^{-1}(Q)^{\frac{1}{2}}}\right \rangle.
\end{align*}
Denote 
$$
    G^{''}_Q(x):=\frac{h_Q^\epsilon(x)  h_Q^\eta(x) \lambda^{1\over 2}(x) |Q|}
    {\lambda(Q)^{1\over 2}} ,
    \quad H^{''}_Q:=\frac{\mu^{-{1\over 2}}(x) h_Q^\eta(x) |Q|^{1\over 2}}{\mu^{-1}(Q)^{\frac{1}{2}}}.
$$

Similarly, 
these two collections of functions are also NWOs. 

Therefore, using \eqref{eq-NWO1} 
$$
   \big \|\lambda^{1\over 2}\Gamma_b^{\mathcal D}\mu^{-{1\over 2}}\big\|_{S^p(L^2(\mathbb R^n), L^2(\mathbb R^n))}^p
   \lesssim  \|b\|_{\mathbf B_\nu^p(\mathbb R^n, \mathcal D)}^p.
$$

\medskip

\noindent{\bf Necessity for \eqref{eqn:Gamma}:} 
For any $Q\in \mathcal D$ and $\epsilon\not\equiv 1$,
 note that for each $\eta\not\equiv 1$ and $\eta\not\equiv \epsilon$,
$$
    \left\langle \Gamma_b^{\mathcal D} (h_Q^\eta), h_Q^\epsilon h_Q^\eta |Q| \right\rangle 
    =  \left\langle \sum_{\epsilon'\not\equiv 1,\, \epsilon'\not\equiv \eta} 
    \langle b, h_Q^{\epsilon'} \rangle h_Q^{\epsilon'}h_Q^\eta, h_Q^\epsilon h_Q^\eta |Q| \right\rangle 
    =\langle b, h_Q^\epsilon\rangle .
$$
Therefore,
\begin{align*}
    \|b\|_{\mathbf B_{\nu}^p(\mathbb R^n,\mathcal D)}^p 
    \approx &\sum_{Q\in\mathcal D}\sum_{\epsilon,\eta\not\equiv 1,\, \epsilon\not\equiv \eta} 
    \left(\frac{\left\langle \Gamma_b^{\mathcal D} (h_Q^\eta), h_Q^\epsilon h_Q^\eta |Q| \right\rangle |Q| }
    {|Q|^{1\over 2}\lambda^{-1}(Q)^{1\over 2} \mu(Q)^{1\over 2}}\right)^p\\
    =&  \sum_{Q\in\mathcal D}\sum_{\epsilon,\eta\not\equiv 1,\, \epsilon\not\equiv \eta} 
    \left(\frac{\left\langle \lambda^{1\over 2}\Gamma_b^{\mathcal D} \mu^{-{1\over 2}} 
    (\mu^{1\over 2}h_Q^\eta), \lambda^{-{1\over 2}} h_Q^\epsilon h_Q^\eta |Q| \right\rangle  |Q| } 
    {  |Q|^{1\over 2}\lambda^{-1}(Q)^{1\over 2} \mu(Q)^{1\over 2}}\right)^p\\
    =&  \sum_{Q\in\mathcal D}\sum_{\epsilon,\eta\not\equiv 1,\, \epsilon\not\equiv \eta}  
    \left|   \left\langle \lambda^{1\over 2}\Gamma_b^{\mathcal D} \mu^{-{1\over 2}}  
    \left(\frac{\mu^{1\over 2}h_Q^\eta |Q|^{1\over 2}}{\mu(Q)^{1\over 2}}\right), 
    \frac{\lambda^{-{1\over 2}}h_Q^\epsilon h_Q^\eta |Q| }
    {\lambda^{-1}(Q)^{1\over 2}} \right\rangle   \right|^p.
\end{align*}
Similarly, we have by Lemma \ref{eq-NWO}
$$
    \|b\|_{\mathbf B_{\nu}^p(\mathbb R^n,\mathcal D)}^p 
    \lesssim \left\|\lambda^{1\over 2}\Gamma_b^{\mathcal D}
    \mu^{-{1\over 2}}\right\|_{S^p(L^2(\mathbb R^n), L^2(\mathbb R^n))}^p .
$$
This completes the argument.
\end{proof}

The last of the four terms to address is defined in \eqref{e:R}, which is done here. We only assert the sufficiency. 

\begin{proposition}\label{prop:Rb}
Suppose  $n<  p<\infty$, $\mu,\,\lambda\in A_2$ and set $\nu=\mu^{\frac{1}{2}}\lambda^{-\frac{1}{2}}$. 
Let  $b\in \mathbf B_{\nu}^p(\mathbb R^n, \mathcal D)$ and denote  
$\mathcal  R_b^{\mathcal D} f:=\Pi_{\Sha f}^{\mathcal D} b-\Sha(\Pi_f^{\mathcal D}b)$. 
We have
$$
    \left\|\lambda^{1\over 2}\mathcal R_b^{\mathcal D}\mu^{-{1\over 2}}\right\| 
    _{S^p(L^2(\mathbb R^n), L^2(\mathbb R^n))}\lesssim \|b\|_{\mathbf B_{\nu}^p(\mathbb R^n,\mathcal D)},
$$
with the implicit constants depending only on $[\mu]_{A_2}$, $[\lambda]_{A_2}$ and $n$, regardless of the choice of $\mathcal D$.
\end{proposition}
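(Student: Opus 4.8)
The plan is to first compute $\mathcal R_b^{\mathcal D}$ exactly, which reveals a cancellation converting a BMO-type quantity into a Besov-type one, and then to feed the resulting expansion into the Rochberg--Semmes machinery precisely as in the sufficiency proof of Proposition~\ref{prop:para-Besov}. First I would insert the definitions of $\Pi_{\Sha f}^{\mathcal D}b$, $\Pi_f^{\mathcal D}b$ and $\Sha$, and use that $\{h_Q^\epsilon\}_{\epsilon\not\equiv 1}$ is orthonormal and that $\Sha h_P^\delta=h_{\sigma(P)}^{\sigma(\delta)}$. Computing $\langle\Sha f,h_Q^\epsilon\rangle$ forces $Q$ to be the child $\sigma(P)$ of some $P\in\mathcal D$; after reindexing the first term by $P$, the two terms $\Pi_{\Sha f}^{\mathcal D}b$ and $\Sha(\Pi_f^{\mathcal D}b)$ become identical except that the average of $b$ is taken over $\sigma(P)$ in the former and over $P$ in the latter, so that
\begin{equation*}
  \mathcal R_b^{\mathcal D}f=\sum_{P\in\mathcal D}\,\sum_{\delta\not\equiv 1}
  \langle f,h_P^\delta\rangle\,\bigl(\langle b\rangle_{\sigma(P)}-\langle b\rangle_P\bigr)\,h_{\sigma(P)}^{\sigma(\delta)}.
\end{equation*}
The appearance of the \emph{difference} $\langle b\rangle_{\sigma(P)}-\langle b\rangle_P$, rather than a single average $\langle b\rangle_Q$, is the whole point. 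Since $\sigma(P)$ is a child of $P$, this difference is the constant value of $\Delta_P b$ on $\sigma(P)$ (Lemma~\ref{haar}(7) and \eqref{e:Delta}), whence
\begin{equation*}
  \bigl|\langle b\rangle_{\sigma(P)}-\langle b\rangle_P\bigr|
  \le\frac{1}{|\sigma(P)|}\int_{\sigma(P)}|\Delta_P b|\,dx
  \lesssim\frac{1}{|P|}\int_{P}|\Delta_P b|\,dx
  \approx\sum_{\eta\not\equiv 1}\frac{|\langle b,h_P^\eta\rangle|}{|P|^{1/2}},
\end{equation*}
the last step being the equivalence of norms on the $2^n$-dimensional span of $\{h_P^\eta\}_{\eta\not\equiv 1}$.

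Next I would set $T:=\lambda^{1/2}\mathcal R_b^{\mathcal D}\mu^{-1/2}$ and, exactly as for the paraproduct $\Pi_b^{\mathcal D}$ in Proposition~\ref{prop:para-Besov}, rewrite each $(P,\delta)$ term of $Tf$ in the rank-one form $\lambda_{P,\delta}\langle f,e_{P,\delta}\rangle f_{P,\delta}$, where
\begin{equation*}
  e_{P,\delta}:=\frac{|P|^{1/2}\mu^{-1/2}h_P^\delta}{\mu^{-1}(P)^{1/2}},\qquad
  f_{P,\delta}:=\frac{|\sigma(P)|^{1/2}\lambda^{1/2}h_{\sigma(P)}^{\sigma(\delta)}}{\lambda(\sigma(P))^{1/2}},
\end{equation*}
\begin{equation*}
  \lambda_{P,\delta}:=\bigl(\langle b\rangle_{\sigma(P)}-\langle b\rangle_P\bigr)\,
  \frac{\mu^{-1}(P)^{1/2}}{|P|^{1/2}}\cdot\frac{\lambda(\sigma(P))^{1/2}}{|\sigma(P)|^{1/2}}.
\end{equation*}
The functions $e_{P,\delta}$ and $f_{P,\delta}$ are supported in $P$ (the latter even in $\sigma(P)\subset P$), and the reverse H\"older inequality for $A_2$ weights (Lemma~\ref{reverse}), applied to $\mu^{-1}\in A_2$ and to $\lambda\in A_2$, yields $\|e_{P,\delta}\|_{r}\lesssim|P|^{1/r-1/2}$ and $\|f_{P,\delta}\|_{r}\lesssim|\sigma(P)|^{1/r-1/2}\approx|P|^{1/r-1/2}$ for some $r>2$; hence, by Lemma~\ref{lem:NWO}, for each fixed $\delta$ the collections $\{e_{P,\delta}\}_{P\in\mathcal D}$ and $\{f_{P,\delta}\}_{P\in\mathcal D}$ are {\rm NWO} sequences. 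Splitting the finite sum over $\delta$, applying \eqref{eq-NWO1} to each piece, and using the doubling bounds $\lambda(\sigma(P))\approx\lambda(P)$ and $|\sigma(P)|\approx|P|$ (Lemma~\ref{double}), I obtain
\begin{equation*}
  \|T\|_{S^p(L^2(\mathbb R^n),L^2(\mathbb R^n))}\lesssim\|\{\lambda_{P,\delta}\}\|_{\ell^p}
  \lesssim\Biggl(\sum_{P\in\mathcal D}\Bigl(\frac{1}{|P|}\int_P|\Delta_P b|\,dx\Bigr)^{p}
  \Bigl(\frac{\lambda(P)^{1/2}\mu^{-1}(P)^{1/2}}{|P|}\Bigr)^{p}\Biggr)^{1/p}.
\end{equation*}
Combining the estimate of the first paragraph with \eqref{eqn:equvi-character-seq} for the choice $R(Q)=\lambda(Q)^{1/2}\mu^{-1}(Q)^{1/2}/|Q|$ (equivalently, with Proposition~\ref{prop:Martingale}), the right-hand side is $\approx\|b\|_{\mathbf B_\nu^p(\mathbb R^n,\mathcal D)}$, which is the assertion.

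The hard part is the algebraic computation in the first paragraph: one must carefully track the interaction of the dyadic shift with the two paraproducts appearing in $\Pi_{\Sha f}^{\mathcal D}b$ and $\Sha(\Pi_f^{\mathcal D}b)$, and the outcome --- that these two terms telescope into a difference of consecutive martingale averages of $b$ --- is exactly what keeps $\mathcal R_b^{\mathcal D}$ from being merely of BMO size and makes a Besov bound possible. Everything afterwards is routine in view of the {\rm NWO} technology already deployed in Proposition~\ref{prop:para-Besov}; the only bookkeeping subtleties are the bounded multiplicity coming from the sum over $\delta$ (finitely many terms per $P$) and the harmless support mismatch that $f_{P,\delta}$ lives on the strictly smaller cube $\sigma(P)\subset P$, both of which are absorbed into constants depending only on $n$, $[\mu]_{A_2}$ and $[\lambda]_{A_2}$.
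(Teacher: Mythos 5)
Your proposal is correct and follows essentially the same route as the paper: you derive the identical telescoping identity $\mathcal R_b^{\mathcal D}f=\sum_{P,\delta}\langle f,h_P^\delta\rangle(\langle b\rangle_{\sigma(P)}-\langle b\rangle_P)h_{\sigma(P)}^{\sigma(\delta)}$, identify the coefficient with the martingale difference $\Delta_P b$ (the paper writes it as $\sum_{\eta}\langle b,h_P^\eta\rangle h_P^\eta(\sigma(P))$ with $|h_P^\eta(\sigma(P))|\approx|P|^{-1/2}$, which is the same estimate), and then apply the same NWO factorization, Lemma~\ref{lem:NWO} via reverse H\"older, \eqref{eq-NWO1}, and \eqref{eqn:equvi-character-seq} with $R(Q)=\lambda(Q)^{1/2}\mu^{-1}(Q)^{1/2}/|Q|$. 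The only differences are cosmetic normalizations (using $\lambda(\sigma(P))$, $|\sigma(P)|$ in place of $\lambda(P)$, $|P|$, reconciled by doubling), so the argument matches the paper's proof.
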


\begin{proof}
By definitions of $\Sha$ and the paraproduct operator $\Pi_b^{\mathcal D}$,
\begin{align*}
    \mathcal  R_b^{\mathcal D} f 
    =& \sum_{Q\in \mathcal D,\,\epsilon\not\equiv 1} \left\langle  \sum_{P\in \mathcal D,\, \eta\not\equiv 1} 
    \langle f, h_P^\eta \rangle h_{\sigma(P)}^{\sigma(\eta)} , \,  h_Q^\epsilon \right\rangle 
    \langle b \rangle_Q h_Q^\epsilon  - \sum_{Q\in \mathcal D,\, \epsilon\not\equiv 1} 
    \left\langle \sum_{P\in \mathcal D,\, \eta\not\equiv 1} \langle f, h_P^\eta \rangle 
    \langle b \rangle_P h_P^\eta ,    \,   h_Q^\epsilon \right\rangle h_{\sigma(Q)}^{\sigma(\epsilon)}\\
     =& \sum_{P\in \mathcal D,\,\eta\not\equiv 1} \langle f, h_P^\eta \rangle \langle b\rangle_{\sigma(P)} 
     h_{\sigma(P)}^{\sigma(\eta)} -\sum_{Q\in \mathcal D,\, \eta\not\equiv 1} 
     \langle f, h_Q^\epsilon \rangle  \langle b \rangle_Q  h_{\sigma(Q)}^{\sigma(\epsilon)} \\
     =&  \sum_{Q\in \mathcal D,\, \eta\not\equiv 1} \langle f, h_Q^\epsilon \rangle     
     \left(   \langle b \rangle_{\sigma(Q)}  - \langle b \rangle_Q \right) h_{\sigma(Q)}^{\sigma(\epsilon)} .
\end{align*}
Recall that 
$\displaystyle \langle b\rangle_P=\sum_{R\in \mathcal D,  \, P\subsetneqq R,\,\eta\not\equiv 1} 
\langle b, h_R^\eta\rangle h_R^\eta(P)$, 
where $h_R^\eta(P)$ denotes the constant value of $h_R^\eta$ on $P$, thus 
$$
    \langle b \rangle_{\sigma(Q)}  - \langle b \rangle_Q
    =\sum_{\eta\not\equiv 1} \langle b, h_Q^\eta\rangle h_Q^\eta(\sigma(Q)).   
$$
Therefore,
\begin{align*}
    &\lambda^{1\over 2}(x)\mathcal  R_b^{\mathcal D} (\mu^{-{1\over 2}} f)(x)\\
    =& \sum_{Q\in \mathcal D,\, \epsilon,\eta\not\equiv 1} \langle \mu^{-{1\over 2}}f, h_Q^\epsilon\rangle  
    \lambda^{1\over 2}(x) \langle b, h_Q^\eta \rangle h_Q^\eta(\sigma(Q)) h_{\sigma(Q)}^{\sigma(\epsilon)}(x)\\
    =& \sum_{Q\in \mathcal D,\, \epsilon,\eta\not\equiv 1}   \frac{\langle b, h_Q^\eta \rangle 
    h_Q^\eta(x) (\sigma(Q)) \lambda(Q)^{1\over 2} \mu^{-1}(Q)^{1\over 2}}{|Q|} \cdot 
    \frac{ \lambda^{1\over 2}(x)h_{\sigma(Q)}^{\sigma(\epsilon)}(x)  |Q|^{1\over 2}}{\lambda(Q)^{1\over 2}} \cdot 
    \left\langle \frac{\mu^{-{1\over 2}}h_Q^\epsilon |Q|^{1\over 2}}{\mu^{-1}(Q)^{1\over 2}}, \, f\right\rangle.
\end{align*}
Due to $|h_Q^\eta (\sigma(Q)) |, \, |h_{\sigma(Q)}^{\sigma(\epsilon)}|, \, |h_Q^\epsilon|\approx |Q|^{-{1\over 2}}$, 
similar to the argument in Proposition \ref{prop:para-Besov}, we have
$$
    \left\|\lambda^{1\over 2}\mathcal R_b^{\mathcal D}\mu^{-{1\over 2}}\right\|_{S^p(L^2(\mathbb R^n), L^2(\mathbb R^n))}^p
    \lesssim   \sum_{Q\in \mathcal D,\,  \eta\not\equiv 1}   
    \left|\frac{\langle b, h_Q^\eta \rangle h_Q^\eta (\sigma(Q)) \lambda(Q)^{1\over 2} \mu^{-1}(Q)^{1\over 2}}{|Q|} \right|^p
    \lesssim \|b\|_{\mathbf B_\nu^p (\mathbb R^n,\mathcal D)}^p,
$$
as desired.
\end{proof}

\smallskip

Combining Propositions \ref{prop:para-Besov}, \ref{prop:Rb}, we have 
$$
   \big\|[b,R_j]\big\|_{S^p \big(L_{\mu}^2(\mathbb R^n), L_{\lambda}^2(\mathbb R^n)\big)}
   =\big\|\lambda^{1\over 2}\mathcal  [b, R_j]\mu^{-{1\over 2}} \big\|_{S^p(L^2(\mathbb R^n), L^2(\mathbb R^n))} 
   \lesssim  \|b\|_{\mathbf B_\nu^p (\mathbb R^n)} 
$$
for $n<p<\infty$. 
That is, the Schatten norm is controlled by the Besov space norm, 
which is the conclusion of  Proposition~\ref{prop:commut-Riesz-1}. 


\subsection{Necessity} \label{sub:necessity}

We turn to the necessity of the Besov norm condition. This will complete the proof of Theorem~\ref{thm:main1}. 

\begin{proposition}\label{prop:Besov-2}
Suppose $\mu,\lambda\in A_2$ and set $\nu=\mu^{\frac{1}{2}}\lambda^{-\frac{1}{2}}$ for $n<  p<\infty$. 
Suppose $b\in {\rm VMO}_{\nu}(\mathbb R^n)$ with 
$ \left\|[b,R_j]\right\|_{S^p \big(L_{\mu}^2(\mathbb R^n), L_{\lambda}^2(\mathbb R^n)\big)}<\infty$, 
then $b\in \mathbf B_{\nu}^p(\mathbb R^n)$ with 
$$
    \|b\|_{\mathbf B_{\nu}^p(\mathbb R^n)} 
    \lesssim \left\|[b,R_j]\right\|_{S^p \big(L_{\mu}^2(\mathbb R^n), L_{\lambda}^2(\mathbb R^n)\big)}.
$$ 
\end{proposition}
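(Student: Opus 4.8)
The plan is to dualize the weights and then run the nearly weakly orthogonal (NWO) machinery of Rochberg--Semmes in reverse. By Theorem~\ref{thm:Besov-Intersect} it is enough to bound $\|b\|_{\mathbf B_\nu^p(\mathbb R^n,\mathcal D)}$ by $\|[b,R_j]\|_{S^p(L_\mu^2(\mathbb R^n),L_\lambda^2(\mathbb R^n))}$ uniformly over the $3^n$ adjacent systems $\mathcal D=\mathcal D^\omega$, so we fix one such $\mathcal D$. Using the equivalence \eqref{eqn:equvi-character-seq} with $R(Q)=|Q|\big/\big(\lambda^{-1}(Q)^{1/2}\mu(Q)^{1/2}\big)$ and setting $T:=\lambda^{1/2}[b,R_j]\mu^{-1/2}$, which has the same Schatten norm on $L^2(\mathbb R^n)$ as $[b,R_j]\colon L_\mu^2\to L_\lambda^2$, the goal becomes
\[
  \sum_{Q\in\mathcal D,\ \epsilon\not\equiv 1}\big(\tfrac{|\langle b,h_Q^\epsilon\rangle|}{\sqrt{|Q|}}\,R(Q)\big)^{p}\ \lesssim\ \|T\|_{S^p(L^2(\mathbb R^n),L^2(\mathbb R^n))}^{p}.
\]
The hypothesis $b\in{\rm VMO}_\nu(\mathbb R^n)$ ensures, via \cite{LL2022}, that $[b,R_j]$ is already compact, so $T$ has a genuine singular value decomposition, and that every summand on the left is finite.

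For each $Q\in\mathcal D$ I would choose a partner cube $\widetilde Q$ of the same side length, placed a fixed number $M$ of side lengths from $Q$ in the $e_j$-direction, so that on $Q\times\widetilde Q$ the Riesz kernel $R_j(x,y)=c_n(x_j-y_j)|x-y|^{-n-1}$ is smooth, of one sign, and of size $\approx|Q|^{-1}$. Writing $G:=R_j\mathsf{1}_{\widetilde Q}$ and $A_0:=G(c_Q)$ (with $c_Q$ the center of $Q$), one has $A_0\neq 0$ with $|A_0|\approx 1$, and $G$ close to the constant $A_0$ on $Q$. Decomposing $b(x)-b(y)$ around $\langle b\rangle_Q$ and $\langle b\rangle_{\widetilde Q}$ and testing against the mean-zero $h_Q^\epsilon$ gives
\[
  \big\langle [b,R_j]\mathsf{1}_{\widetilde Q},\,h_Q^\epsilon\big\rangle \;=\; A_0\,\langle b,h_Q^\epsilon\rangle \;+\; \mathrm{Err}(Q),
\]
where $\mathrm{Err}(Q)$ collects $\langle (b-\langle b\rangle_Q)(G-A_0),h_Q^\epsilon\rangle$, a term with the factor $\langle b\rangle_Q-\langle b\rangle_{\widetilde Q}$, and $R_j((b-\langle b\rangle_{\widetilde Q})\mathsf{1}_{\widetilde Q})$ tested against $h_Q^\epsilon$. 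Setting
\[
  E_Q:=\frac{\mu^{1/2}\mathsf{1}_{\widetilde Q}}{\mu(\widetilde Q)^{1/2}},\qquad F_Q:=\frac{|Q|^{1/2}\,\lambda^{-1/2}h_Q^\epsilon}{\lambda^{-1}(Q)^{1/2}},
\]
the reverse H\"older inequality (Lemma~\ref{reverse}) together with Lemma~\ref{lem:NWO} shows that $\{E_Q\}$ and $\{F_Q\}$ are NWO sequences for $L^2(\mathbb R^n)$ (after harmlessly enlarging the supports to a common cube comparable to $Q$, and using $\mu(\widetilde Q)\approx\mu(Q)$ by Lemma~\ref{double}), and one computes $\langle T E_Q,F_Q\rangle\approx \tfrac{R(Q)}{\sqrt{|Q|}}\big(A_0\langle b,h_Q^\epsilon\rangle+\mathrm{Err}(Q)\big)$. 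Lemma~\ref{eq-NWO} then yields $\sum_{Q,\epsilon}|\langle TE_Q,F_Q\rangle|^p\lesssim\|T\|_{S^p}^p$, so the target sum is controlled by $\|T\|_{S^p}^p$ plus $\sum_{Q,\epsilon}\big(\tfrac{R(Q)}{\sqrt{|Q|}}\mathrm{Err}(Q)\big)^p$.

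The hard part — essentially the only substantial point — is the treatment of $\sum_{Q,\epsilon}\big(\tfrac{R(Q)}{\sqrt{|Q|}}\mathrm{Err}(Q)\big)^p$: the ``constant part'' $A_0$ of $R_j\mathsf{1}_{\widetilde Q}$ is not overwhelmingly larger than its fluctuation on $Q$, so the errors cannot simply be discarded, and the weight factors $R(Q)$ compared across $M$-separated cubes prevent a naive smallness gain. The plan is to re-expand $\mathrm{Err}(Q)$ into Haar coefficients of $b$: smoothness of $G$ writes $\langle(b-\langle b\rangle_Q)(G-A_0),h_Q^\epsilon\rangle$ as a scale-decaying, lower-triangular combination of $\{\langle b,h_R^\eta\rangle\colon R\subseteq Q\}$, the $\widetilde Q$-oscillation term as a decaying combination of Haar coefficients of $b$ on subcubes of $\widetilde Q$ (which form a comparable dyadic family, itself a copy of $\mathcal D$ up to a scale-uniform translation), and the $\langle b\rangle_Q-\langle b\rangle_{\widetilde Q}$ term as a combination over the boundedly many intervening ancestor scales. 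A Schur-test / Young's-inequality argument on the resulting matrices — balancing the decay rate against the number of cubes per scale and the weight ratios $R(Q)/R(R)$ controlled by the doubling and reverse-doubling exponents of $\mu,\lambda$ — absorbs all of this, the NWO formalism being tailored precisely to handle such errors. The condition $p>n$ is essential here: $\approx 2^{nk}$ subcubes appear $k$ scales below $Q$ while the decay and weight factors only supply a fixed negative power, so the relevant row and column sums converge exactly when $p>n$ — the analytic shadow of the Janson--Wolff cutoff recorded in Theorem~\ref{thm:main2}. Finishing the absorption and summing over $\omega$ via Theorem~\ref{thm:Besov-Intersect} completes the argument.
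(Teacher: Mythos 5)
Your reduction to a single dyadic grid via Theorem~\ref{thm:Besov-Intersect}, the use of \eqref{eqn:equvi-character-seq} with $R(Q)=|Q|/(\lambda^{-1}(Q)^{1/2}\mu(Q)^{1/2})$, and the weighted NWO families built from $\mu^{1/2}\mathsf{1}$ and $\lambda^{-1/2}h_Q^\epsilon$ are in the spirit of the paper, and your extraction of the main term $A_0\langle b,h_Q^\epsilon\rangle$ from $\langle[b,R_j]\mathsf{1}_{\widetilde Q},h_Q^\epsilon\rangle$ is correct as far as it goes. But your route differs from the paper's in the decisive point, and the step you yourself call ``the hard part'' is exactly where the argument is not done. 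The paper avoids error terms altogether: Proposition~\ref{p:BesovRiesz} replaces the Haar functions by finitely many functions $g_{Q,l}=2^{(k+1)n/2}(\mathsf{1}_{Q(l,1)}-\mathsf{1}_{Q(l,2)})$, differences of indicators of two grandchildren of the \emph{same} cube $Q$ separated in the $j$-th coordinate, which span the range of $\mathsf{1}_Q(E_{k+2}\cdot-E_k\cdot)$ and still compute the dyadic Besov norm; on $Q(l,1)\times Q(l,2)$ the kernel $K_j$ has one sign and size $\gtrsim |Q|^{-1}$, so $|\langle b,g_{Q,l}\rangle|$ is dominated directly by the pairing of $\lambda^{1/2}[b,R_j]\mu^{-1/2}$ against the NWO functions $\mu^{1/2}\mathsf{1}_{Q(l,2)}/\mu(Q)^{1/2}$ and $\lambda^{-1/2}\mathsf{1}_{Q(l,1)}/\lambda^{-1}(Q)^{1/2}$ (both supported in $Q$, so Lemma~\ref{eq-NWO} applies verbatim), giving \eqref{e:needagain} with no remainders and no absorption. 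Note the paper's estimate holds for all $1<p<\infty$ and is reused at $p=n$ in the proof of Theorem~\ref{thm:main2}, so $p>n$ plays no role in this direction; your claim that $p>n$ is what makes the necessity argument close is already a warning sign.

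The genuine gap is the treatment of $\mathrm{Err}(Q)$. The fluctuation piece $\langle(b-\langle b\rangle_Q)(G-A_0),h_Q^\epsilon\rangle$, the transfer term with $\langle b\rangle_Q-\langle b\rangle_{\widetilde Q}$, and the $\widetilde Q$-oscillation piece each carry, after multiplication by $R(Q)/\sqrt{|Q|}$, weight ratios such as $R(Q)/R(R)$ for $R\subset Q$ or $R(Q)/R(\widetilde Q)$ across the separation $M\ell(Q)$, controlled only through doubling and reverse H\"older with constants growing in $M$ and in $[\mu]_{A_2},[\lambda]_{A_2}$; the $1/M$ gain from the kernel's smoothness does not obviously beat them, as you concede. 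The proposed fix --- a Schur/Young argument whose ``row and column sums converge exactly when $p>n$'' --- is neither carried out nor the right mechanism: counting the $2^{mn}$ descendants at scale $m$ against a fixed kernel decay makes the row sums diverge for every $p$, and the actual control in this two-weight setting comes from the geometric decay of the measure ratios (exactly as in Step~I of the proof of \eqref{eqn:equvi-character-seq}), which works for all $p$ and has no $p>n$ threshold. In addition, absorbing a multiple of $\|b\|_{\mathbf B_\nu^p(\mathbb R^n,\mathcal D)}^p$ into the left-hand side needs an a priori finiteness or a truncation-and-limit argument you never address, and Lemma~\ref{eq-NWO} pairs families indexed by and supported in the same cube, whereas your $E_Q$ lives on $\widetilde Q\neq Q$ (repairable, but not by ``enlarging supports''). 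As written, the crucial absorption is a plan rather than a proof; the paper's adapted spanning family $g_{Q,l}$ is precisely the device that makes this difficulty disappear.
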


We setup an alternate norm for the Besov space, adapted to a particular Riesz transform.

\begin{proposition}\label{p:BesovRiesz} 
There are absolute constants and integer $N$ so that this holds for any grid $\mathcal D$.   
Fix  $1\leq j \leq n$. 
For all $Q\in  \mathcal D_k$, for integer $k$, there are  functions 
\begin{equation} \label{e:gPP}
    g _{Q,l}  =  2 ^{(k+1)n/2} ( \mathsf{1}_{Q(l,1)}- \mathsf{1}_{Q(l,2)}) , 
    \qquad 1\leq l \leq N .  
\end{equation}
where 
\begin{itemize}
\item[(i)] $Q(l,1), Q (l,2)$ are contained in $Q$, and elements of $\mathcal D _{k+2}$;

\item[(ii)]   The span of the $\{g _{Q,l}\}$ equals the range of $\mathsf{1}_Q (E_{k+2} \cdot - E_k \cdot ) $;

\item[(iii)]   $K_j (x_1 - x_2)$ does not change signs where   $x_s\in Q(l,s)$, and   $K_j$ is the kernel of the $j$th Riesz transform; 
 
\item[(iv)]   $\lvert K_j (x_1-x_2) \rvert \gtrsim \lvert Q \rvert ^{-1}$, where   $x_s\in Q(l,s)$;

\item[(v)]   We have the equivalence of norms 
\begin{equation} \label{e:equiv}  
    \|b\|_{\mathbf B_{\nu}^p(\mathbb{R}^n, \mathcal D)} ^{p} 
    \simeq \sum_{Q\in \mathcal D}  \sum_{l=1} ^{N}  { \left(\frac{|Q|}{\nu(Q) }\right)^{p}  }  \left(\frac{|\langle b,g _{Q,l}\rangle|}{\sqrt{|Q|}}\right)^p .
 \end{equation}
\end{itemize}
\end{proposition}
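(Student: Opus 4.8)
The plan is to construct, for each dyadic cube $Q\in\mathcal D_k$, a finite family of "difference of indicators" functions $g_{Q,l}$ at the scale of the second dyadic generation below $Q$, chosen so that the Riesz kernel $K_j$ is essentially one-signed and of the expected size between the two supporting cubes, and then to check that the resulting expansion recovers the dyadic Besov norm. First I would set up the combinatorics: fix $Q\in\mathcal D_k$ and look at its $2^{2n}$ grandchildren in $\mathcal D_{k+2}$. The functions $\mathsf{1}_Q(E_{k+2}\cdot - E_k\cdot)$ span a space of dimension $2^{2n}-1$, and this space has a basis consisting of normalized differences $2^{(k+1)n/2}(\mathsf 1_{Q'} - \mathsf 1_{Q''})$ where $Q',Q''$ are grandchildren; choosing $N$ (depending only on $n$) such pairs that span the space is a fixed linear-algebra fact. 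To arrange (iii) and (iv), I would pick the two grandchildren $Q(l,1),Q(l,2)$ inside $Q$ so that their centers are separated by a distance comparable to $\ell(Q)$ but the direction $x_1-x_2$ stays in a fixed narrow cone away from the hyperplane $\{(x_1-x_2)\cdot e_j = 0\}$; since $K_j(x) = c_n\, x_j/|x|^{n+1}$, on such a configuration the $j$th coordinate of $x_1-x_2$ does not vanish and $|K_j(x_1-x_2)|\simeq |x_1-x_2|^{-n}\simeq \ell(Q)^{-n}\simeq |Q|^{-1}$. The point is that one can always find enough such well-separated, well-oriented pairs of grandchildren to still span the range in (ii) — this is where the constant $N$ and the "second generation below $Q$" (rather than the immediate children) are used, since at two generations down there is enough room to both span and keep the cone condition.

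Next I would prove the norm equivalence (v). The inequality $\sum_{Q}\sum_l (|Q|/\nu(Q))^p (|\langle b,g_{Q,l}\rangle|/\sqrt{|Q|})^p \lesssim \|b\|_{\mathbf B_\nu^p(\mathbb R^n,\mathcal D)}^p$ is the easy direction: each $g_{Q,l}$ is supported in $Q$, has $L^1$ norm $\simeq |Q|$, is mean-zero, and is a linear combination (with $O(1)$ coefficients) of the Haar functions $h_{Q'}^\epsilon$ with $Q'\in\{Q\}\cup\{\text{children of }Q\}$ and $\epsilon\not\equiv 1$; hence $|\langle b,g_{Q,l}\rangle| = |\langle b-\langle b\rangle_Q, g_{Q,l}\rangle| \lesssim \frac{1}{|Q|}\int_Q |b-\langle b\rangle_Q|\,dx \cdot \sqrt{|Q|}$ up to constants, and summing in $Q$ and $l$ (with $l$ ranging over the fixed finite set) and using Definition~\ref{def:Besov-endpoint-Delta} gives the bound. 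For the reverse inequality I would use that the span condition (ii) lets one write $\Delta_{Q'}b$ for $Q'$ a child of $Q$, hence $\mathsf 1_Q(E_{k+2}b - E_k b)$, as an $O(1)$-coefficient linear combination of the $g_{Q,l}$; therefore $\frac{1}{\nu(Q)}\int_Q |(E_{k+2}b-E_kb)| \lesssim \frac{|Q|}{\nu(Q)}\sum_l |\langle b,g_{Q,l}\rangle|/|Q|$, and then one invokes Proposition~\ref{prop:Martingale} together with the doubling of $\nu$ to pass from the two-generation martingale increments on $\mathcal D_k$ to the full sum $\sum_{Q\in\mathcal D}(\frac{1}{\nu(Q)}\int_Q |\Delta_Q b|)^p$. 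One has to be slightly careful that the map $Q\mapsto (E_{k+2}b-E_kb)\mathsf 1_Q$ bundles two generations of martingale differences, but since $\nu(Q)\simeq \nu(Q')$ for $Q'$ a child of $Q$ by doubling, splitting into even/odd generations and re-indexing costs only a constant.

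The main obstacle I expect is item (iii)–(iv), i.e.\ the geometric selection of the pairs $Q(l,1),Q(l,2)$: one must simultaneously (a) keep both cubes inside $Q$ and dyadic of generation $k+2$, (b) guarantee $K_j$ is one-signed and of size $\simeq |Q|^{-1}$ between them, which forces the separation vector to avoid the degenerate hyperplane for the $j$th Riesz kernel, and (c) still have the chosen family span the full $(2^{2n}-1)$-dimensional range in (ii). Showing such a family exists with cardinality $N = N(n)$ is a finite, dimension-counting argument — there are $\binom{2^{2n}}{2}$ candidate pairs and only a bounded-codimension "bad" set to avoid — but it must be done carefully and uniformly over all grids $\mathcal D$ and all $1\le j\le n$; this uniformity is exactly why the statement asserts absolute constants independent of $\mathcal D$. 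Once this selection lemma is in hand, everything else is the routine Haar/martingale bookkeeping sketched above.
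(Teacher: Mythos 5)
Your outline follows the same route as the paper: differences of grandchildren at scale $k+2$, normalized as in \eqref{e:gPP}, with the $j$th coordinate of the separation forced to be comparable to $\ell(Q)$ so that $K_j$ is one-signed and of size $|Q|^{-1}$; then (v) is reduced to the spanning property (ii) via equivalence of norms on a $2^{2n}-1$ dimensional space, the $L^\infty$ bound $\lvert\langle b,g_{Q,l}\rangle\rvert\lesssim |Q|^{-1/2}\int_Q|b-\langle b\rangle_Q|$ for the easy direction, and doubling of $\nu$ plus even/odd re-indexing to pass between one- and two-generation martingale increments. All of that matches the paper's proof and is correct.

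The one substantive gap is the step you yourself flag as the main obstacle: you assert, but do not prove, that one can choose well-oriented pairs of grandchildren that still span the range of $\mathsf 1_Q(E_{k+2}\cdot-E_k\cdot)$, and the mechanism you suggest (``only a bounded-codimension bad set to avoid'' among the $\binom{2^{2n}}{2}$ pairs) is not the right one: the candidate pairs form a finite discrete set, so there is no genericity/codimension argument available; one must exhibit the spanning directly. The paper does this with a short explicit combinatorial argument. Taking $j=1$, call a pair $(P_1,P_2)$ of grandchildren \emph{admissible} if $\lvert(x_{P_1}-x_{P_2})\cdot(1,0,\ldots,0)\rvert\geq 2^{-k-1}$, i.e.\ the cubes are not adjacent in the first coordinate; this already gives (iii) and (iv). For spanning, it suffices to show every difference $g_{P_1,P_2}$ (admissible or not) lies in the span of the admissible ones: if some third grandchild $P_3$ makes both $(P_1,P_3)$ and $(P_2,P_3)$ admissible, then $g_{P_1,P_2}=g_{P_1,P_3}-g_{P_2,P_3}$; otherwise $P_1,P_2$ are adjacent in the first coordinate, and one picks $P_3,P_4$ with $(P_1,P_3)$, $(P_2,P_4)$, $(P_3,P_4)$ admissible and writes $g_{P_1,P_2}$ as a combination of those three. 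This telescoping device is exactly what your plan is missing; with it inserted, your argument coincides with the paper's.
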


\begin{proof} 
Since the weights in question are doubling, the $g _{Q,s}$ have the spanning property listed above, 
 and all norms on finite dimensional space are equivalent, we have 
$$
    \|b\|_{\mathbf B_{\nu}^p(\mathbb{R}^n, \mathcal D)} ^{p}  
    \simeq \sum _{k\in \mathbb{Z}   } 
    \sum_{Q\in \mathcal{D}_k }  \left(\frac{1}{ \nu(Q) }  
    \int_Q{| E_{k+2} b - E _{k} b  | }\; dx\right)^{p} .
$$

In comparison to our definition of the dyadic Besov space in \eqref{e:DyadicB}, 
we are using the difference of conditional expectations, which differ by $2$ instead of $1$.  
Then, by the spanning properties of the $g _{Q,l}$, the equivalence \eqref{e:equiv} follows. 

The approach is to construct the $g _{Q,l}$ meeting all but property \eqref{e:equiv}. 
And, in addition, the span of the $\{g _{Q,l} \}$ is the range of $ \mathsf{1}_Q (  E_{k+2} \cdot - E _{k} \cdot )$. 

We can fix $j=1$, for which the kernel of the Riesz transform is  
$K_1 (y) = \frac {y_1}{\lvert y \rvert ^{n+1}}$, for $y\neq 0$. 
Fix $Q\in \mathcal D_k$.  The vector space of the range of $\mathsf{1}_Q (E_{k+2} \cdot - E _{k} \cdot)$ 
has dimension $2^{2n}-1$, and consists of functions with zero integral, constant on the grandchildren of $Q$.  
The grandchildren of $Q$ are the cubes  $\mathcal P(Q)$ consisting of 
all the cubes $P\in \mathcal D _{k+2}$, which are contained in $Q$. 
Let $x_P$ be the center of $P$. Then our  basis is the collection of functions 
\begin{equation} 
    g_{P_1, P_2} \coloneqq   2 ^{(k+1)n/2} ( \mathsf{1}_{P_1} - \mathsf{1}_{P_2}), 
    \qquad  
    P_1, P_2\in \mathcal P(Q),\ \lvert (x_{P_1} - x_{P_2}) \cdot (1, 0,\ldots , 0) \rvert \geq 2 ^{-k-1}. 
\end{equation}
That is, we form the difference of the indicator function of two cubes in $\mathcal P(Q)$ 
provided they are \textit{not adjacent in the first coordinate}. 
Call the pairs $(P_1, P_2) $ that meet this condition \textit{admissible}.

$\{g_{P_1,P_2}\}$ is certainly a system of functions, bounded in number independently of the choice of $Q\in \mathcal D_k$. 
They satisfy the cancellation and size conditions claimed by Lemma~\ref{haar}, 
and satisfy the conditions relative to the kernel $K_1$. 
It is not immediately clear that their span is as required, namely 
the range of $ \mathsf{1}_Q (  E_{k+2} \cdot - E _{k} \cdot )$.  
This is an elementary argument, which completes this proof. 

We claim that every  $g_{P_1,P_2}$  with $P_1, P_2\in \mathcal P(Q)$ is in the linear span of the functions with admissible  pairs. 
This immediately implies the required spanning property.  
There are two cases. 
First, suppose that $P_1 \neq  P_2\in \mathcal P(Q)$  such that there is a 
a third cube $P_3$ so that $ (P_1,P_3)$ and $(P_2,P_3)$ are admissible. 
It is clear that $g _{P_1, P_2}$ is in the linear span of $g _{P_1,P_3}$ and $g _{P_2, P_3}$.  

Second, if this is not the case, it must be that 
$P_1 \neq  P_2\in \mathcal P(Q)$ are \textit{adjacent} in the first coordinate.  
Then, pick $P_3$ and $P_4$ so that $(P_1,P_3)$, $(P_2,P_4)$ and $(P_3,P_4)$ are admissible. 
It follows that $g _{P_1,P_2}$ is in the linear span of the three   $g _{P_1,P_3}, g _{P_2,P_4}$ 
and $g _{P_3, P_4}$.    The proof is complete.
\end{proof}

\smallskip

\begin{proof}[Proof of Proposition \ref{prop:Besov-2}]
For a fixed $b \in \mathbf B_{\nu}^p(\mathbb R^n)$, 
\begin{align}\label{eqn:aux-T}
    \lVert b \rVert _{\mathbf B_{\nu}^p(\mathbb R^n)} ^{p} \lesssim   \sum_{\omega\in \left\{0,\frac{1}{3},\frac{2}{3}\right\}^n} \|b\|_{\mathbf B_{\nu}^p(\mathbb{R}^n, \mathcal D)} ^{p} 
    \lesssim \sum_{\omega\in \left\{0,\frac{1}{3},\frac{2}{3}\right\}^n}\sum_{Q^\omega \in \mathcal D^\omega}  \sum_{l=1} ^{N}  
   { \left(\frac{|Q^\omega|^2}{\mu(Q^\omega)\lambda^{-1}(Q^\omega)}\right)^{p\over 2}  } \left( \frac{|\langle b,g _{Q^\omega,l}\rangle|}{\sqrt{|Q^\omega|}}\right)^p.
\end{align}  

We will recognize the right hand side as an expression involving NWO functions. 
Now, we inspect a single inner product in the sum above.  
Recall the definition of $g _{Q^\omega,l}$ in \eqref{e:gPP} and the property (iii) in Proposition \ref{p:BesovRiesz}, we have
\begin{align*}
 \left\vert \langle   b,g _{Q^\omega,l}\rangle \right\vert 
    & \lesssim \Bigg| \frac{1}{\sqrt{ \lvert Q^\omega \rvert} }  \int _{Q^\omega(l,1)} \int _{Q^\omega (l,2)}  \big(b(x)-b(y)\big ) K_j (x-y) \;dx\,dy \Bigg|. 
\end{align*}
On the right-hand side, we insert $\lambda ^{\frac{1} {2}} (x) \lambda ^{- \frac{1}{2}} (x) $ 
and $\mu ^{\frac{1}{2}}(y) \mu ^{-\frac{1}{2}}(y)$. 
That with some manipulations leads to 
\begin{align}
    &   { \left(\frac{|Q^\omega|^2}{\mu(Q^\omega)\lambda^{-1}(Q^\omega)}\right)^{p\over 2}  } \left( \frac{|\langle b,g _{Q^\omega,s}\rangle|}{\sqrt{|Q^\omega|}}\right)^p\nonumber\\
     \lesssim\,  & { \left(\frac{|Q^\omega|^2}{\mu(Q^\omega)\lambda^{-1}(Q^\omega)}\right)^{p\over 2}  } \frac{1}{|Q^\omega|^p}
         \nonumber \\
         & \quad \cdot \Bigg|  \int\!\!\int  \big (b(x)-b(y)\big )\lambda^{\frac{1}{2}}(x) K_j(x-y) \mu^{-\frac{1}{2}}(y)
     \mu^{\frac{1}{2}}(y)\mathsf{1}_{Q^\omega (l,2)} (y) \cdot 
     {\lambda^{-\frac{1}{2}}(x)\mathsf{1}_{Q^\omega (l,1)}(x)}  \; dy\,dx \, \Bigg|^p \nonumber\\  \label{e:ttt}
      =\,&   
     \Biggl|  \int\!\!\int  \big (b(x)-b(y)\big ) \left[\lambda^{\frac{1}{2}}(x) K_j(x-y) \mu^{-\frac{1}{2}}(y) \right ] 
     \frac{\mu^{\frac{1}{2}}(y)\mathsf{1}_{Q^\omega (l,2)} (y)   } { \mu (Q^\omega) ^{\frac{1}{2}} }
     \frac {\lambda^{-\frac{1}{2}}(x)\mathsf{1}_{Q^\omega (l,1)}(x)} { \lambda ^{-1} (Q^\omega) ^{\frac{1}{2}}}  
     \; dy\,dx \, \Biggr|^p. 
 \end{align}

Denote 
$$
    G_{Q^\omega,s}(y)= \frac{\mu^{\frac{1}{2}}(y)\mathsf{1}_{Q^\omega (l,2)} (y)   } { \mu (Q^\omega) ^{\frac{1}{2}} },
    \quad 
    H_{Q^\omega,j}(x)=\frac {\lambda^{-\frac{1}{2}}(x)\mathsf{1}_{Q^\omega (l,1)}(x)} { \lambda ^{-1} (Q^\omega) ^{\frac{1}{2}}}  .
$$
Then, we have 
$$
    \eqref{e:ttt} = \bigl\lvert \bigl \langle \lambda^{\frac{1}{2}} [b, R_j] 
    \mu^{-\frac{1}{2}}  G_{Q^\omega,l}, \,H_{Q^\omega,l}\bigr \rangle \bigr\rvert  ^p. 
$$
Moreover, both  $\{G_{Q^\omega,l}\}_{Q^\omega \in \mathcal D^{\omega}}$ and $\{H_{Q^\omega,l}\}_{Q^\omega\in \mathcal D^{\omega}}$  are NWOs.   Indeed, by Lemma \ref{reverse}, there exists a constant $\sigma_\mu>0$, such that  $\mu$ satisfies the $(1+\sigma_\mu)-$reverse H\"older inequality. Let $r=2(1+\sigma_\mu)>2$, 
\begin{align*}
    \|G_{Q^\omega,l}\|_{L^r(\mathbb R^n)} &\lesssim  \frac{1}{\mu(Q^\omega) ^{1\over 2} } \left(\int_{Q^\omega (l,2)} \mu^{r\over 2} (y)dy\right)^{1\over r} \leq \frac{|Q^\omega |^{1\over r}}{\mu(Q^\omega) ^{1\over 2} } \left(\frac{1}{|Q^\omega|}\int_{  Q^\omega} \mu^{1+\sigma_\mu} (y)dy\right)^{ {1\over {1+\sigma_\mu}} \cdot {1\over 2}}\\
    & \lesssim \frac{|Q^\omega |^{1\over r}}{\mu(Q^\omega) ^{1\over 2} }  \left(\frac{1}{|Q^\omega|}\int_{Q^\omega}  \mu(y) dy\right)^{1\over 2}
   \lesssim |Q|^{ {1\over r}- {1\over 2} }, 
\end{align*}
and the argument for $\{H_{Q^\omega,l}\}_{Q^\omega\in \mathcal D^{\omega}}$ is similar.

Hence, we have a replacement for the Schmidt decomposition for $[b,R_j]$ and so by Lemma \ref{eq-NWO} we have 
\begin{align} \label{e:needagain}
    \lVert b \rVert _{\mathbf B_{\nu}^p(\mathbb R^n,\mathcal D^\omega)} ^{p}  
    \lesssim  \sum_{Q^\omega \in \mathcal D^\omega}  \sum_{s=1} ^{N}     \bigl\lvert \bigl \langle 
    \lambda^{\frac{1}{2}} [b, R_j] \mu^{-\frac{1}{2}}  G_{Q^\omega,s}, \,H_{Q^\omega,s}
    \bigr \rangle \bigr\rvert ^p
    \lesssim \big\|\lambda^{\frac{1}{2}} [b, R_j] 
    \mu^{-\frac{1}{2}}\big\|_{S^p(L^2(\mathbb R^n), L^2(\mathbb R^n))}^p. 
\end{align}
Combining this and \eqref{eqn:aux-T}, the proof is finished .  
\end{proof}


\section{Critical index $p=n$: proof of Theorem \ref{thm:main2}}\label{sec:thm2}
\setcounter{equation}{0}

Let all the notation be the same as in Theorem \ref{thm:main2}.  
We will show that if $b$ is non-constant, then the  Schatten $S^n$ norm of the weighted commutator is infinite.  

We begin by considering non-constant $b\in C^\infty(\mathbb R^n)$. 
Thus,  $\nabla b$ is non-zero at some $x_0$. 
And, we can further assume that $x_0$ is a Lebesgue point of  $\nu ^{-1} $, 
 $\nu ^{-1} (x_0) \not= 0$, and does not fall on the boundary of any of our shifted dyadic cubes.  
From these considerations, this follows.  
There exists $N\in \mathbb {Z}_+$, so that for any dyadic grid $\mathcal D $, 
and letting $Q ^{x_0 }_k $ be the cube in $\mathcal D _{k} $ which contains $x_0$, we have 
\begin{equation}\label{eqn:Lebesgue}
    \frac{\nu^{-1}(Q_k^{x_0})}{|Q_k^{x_0}|}\approx \nu^{-1}(x_0) >0, \qquad  k > N.
\end{equation}

{      Note that the argument of the inequality \eqref{e:needagain} ensures that it also holds for $p=n$}, that is,
\begin{equation}
    \lVert b \rVert _{\mathbf B_{\nu}^n(\mathbb R^n,\mathcal D)} ^{n}  
    \lesssim \big\|\lambda^{\frac{1}{2}} [b, R_j] \mu^{-\frac{1}{2}}\big\|_{S^n(L^2(\mathbb R^n), L^2(\mathbb R^n))}^n.    
\end{equation}
{For the same dyadic grid $\mathcal D$, we apply the following equivalence again
$$
      \|b\|_{\mathbf B_{\nu}^n(\mathbb{R}^n, \mathcal D)} ^{n}  
    \simeq \sum _{k\in \mathbb{Z}   } 
    \sum_{Q\in \mathcal{D}_k }  \left(\frac{1}{ \nu(Q) }  
    \int_Q{| E_{k+2} b - E _{k} b  | }\; dx\right)^{n} ,
$$
and we claim that the sum on the right-hand side is infinite.  Fix a cube $Q_0$ containing $x_0$ with $\inf _{x\in Q_0} \lvert \nabla b (x) \rvert > c$.  Without loss of generality, assume that $ Q_0\in \mathcal D_{k_0}$ for some $k_0$ sufficiently large. Then, for any cube $Q\subset Q_0$ with $Q\in \mathcal D_k$, 
we have  $| E_{k+2} b - E _{k} b  | > c' \ell (Q) $ on at least one grandchild $Q'\in\mathcal D_{k+2} $ of $Q$, where $c'=c'(c,n)>0$ independent of $Q$, provided by the mean value theorem of integrals and the fact that there exists $\theta\in (0, {\pi \over 2})$ such that the set 
\begin{align*}
	 \big \{Q' \text{ is one grandchild of } Q: & \,   \text{dist}(x_Q, Q')\geq \ell(Q)/4 \  \   \text{and}  \\
    &     -\theta\leq  \angle( \nabla b(x), x_{Q'}-x_Q) \leq \theta \text{ for  any } x\text{ in the segment }  L_{x_Q x_{Q'}} \big \}\neq \emptyset,
\end{align*}  
where $x_P$ is one point at $P$ satisfying $b(x_P)=\langle b\rangle _P$. Roughly speaking, for given $b\in C^\infty(\mathbb R^n)$, the vector $\nabla  b(x)\approx \nabla b(x_Q)$  since $\ell(Q)$ is small enough, and the above fact can be verified by checking the angles between any given vector in $\mathbb R^n$ (without loss of generality, assume it as $(1,0,\ldots, 0)$) and the vector composed of the vertex $v_s$ chosen from a grandchild $Q_s$ of $Q$, $s=1,2$. 
}

  Hence, we have by application of the $A_2$ conditions and \eqref{eqn:Lebesgue} to see
\begin{align}\label{eqn:critical1}
    \lVert b \rVert _{\mathbf B_{\nu}^n(\mathbb R^n,\mathcal D)} ^{n}   
    &\gtrsim \sum_{j=1}^\infty \sum_{\substack{Q\in \mathcal D, \, Q\subset Q_0 \\  \ell(Q)=2^{-j}\ell(Q_0)}}
\left( \ell(Q) \frac{\nu^{-1}(Q)}{|Q|} \right)^{n}     \gtrsim   \nu^{-1}(x_0)^n  \sum_{j=1}^\infty |Q_0|=+\infty.
\end{align}

In general, consider  $b\in {\rm VMO}_{\nu}(\mathbb R^n)$. 
Following the proof of Proposition~\ref{prop:Besov-2}, we have
\begin{align*}
    \mathcal T(b)&:=\left(\sum_{Q\in \mathcal D} \left(\frac{1}{\nu(Q)}\int_Q \frac{1}{|Q|}\int_Q |b(x)-b(y)| dy dx\right)^n\right)^{1\over n}
   \\
    &\lesssim \|  [b, R_j] \|_{S^n \big(L_{\mu}^2(\mathbb R^n), L_{\lambda}^2(\mathbb R^n) \big)}.  
\end{align*}

Let  $\psi$ be a nonnegative function in  $C_0^\infty(\mathbb R^n)$ with  $\int \psi(x)dx=1$. 
Denote  $\psi_\epsilon(x)=\epsilon^{-n}\psi(x/\epsilon)$ and let $b_\epsilon(x)=\psi_\epsilon * b(x)$, 
then $b_\epsilon\in C^\infty(\mathbb R^n)$.  
Note that for $\epsilon$ sufficiently small, 
$$
    \mathcal T(b_\epsilon)\lesssim\sup_{h\in B(0,1)} \mathcal T(\tau_h b),
$$
where $\tau_h b(x)=b(x-h)$. For any fixed $h\in B(0,1)$ and for $f\in L_{\mu}^2(\mathbb R^n)$,
$$
    [\tau_h b, R_j]f(x)=\int_{\mathbb R^n} \left(b(x-h)-b(y-h)\right) K_j(x-y)f(y)dy,
$$
then the translation invariance of the kernel $K_j$ of $R_j$ yields
$$
     \left\| [\tau_h b, R_j]f\right\|_{L_{\lambda}^2(\mathbb R^n) )}^2  
     =\int_{\mathbb R^n} \left| \int_{\mathbb R^n} \left(b(x)-b(y)\right)  
     K_j(x-y) \tau_{-h}f(y)dy\right|^2  \tau_{-h}\lambda(x) dx.
$$
Observe that 
$$
     \int |\tau_{-h}f(y)|^2 \tau_{-h}\mu(y)dy=\int |f(y)|^2 \mu(y)dy,
$$
$[\tau_{-h} \mu]_{A_2}=[\mu]_{A_2}$ and  
$[\tau_{-h} \lambda]_{A_2}=[\lambda]_{A_2}$ by definition. 
Hence
$$
    \mathcal T(\tau_h b)\lesssim  \|  [b, R_j] \| 
    _{S^n \big(L_{\tau_{-h} \mu}^2(\mathbb R^n), L_{\tau_{-h}\lambda}^2(\mathbb R^n) \big)}
    =\|  [b, R_j] \|_{S^n \big(L_{\mu}^2(\mathbb R^n), L_{\lambda}^2(\mathbb R^n)\big )}.
$$
This yields that for $\epsilon$ sufficiently small, 
$$
    \mathcal T(b_\epsilon)\lesssim
    \|  [b, R_j] \|_{S^n \big(L_{\mu}^2(\mathbb R^n), L_{\lambda}^2(\mathbb R^n)\big )}.
$$
Nevertheless, a similar argument yields that if the smooth function $b_\epsilon$ is non-constant, 
then $\mathcal T(b_\epsilon)=\infty$. Hence $b_\epsilon$ is constant almost everywhere 
when $[b,R_j]$ belongs to $S^n \big (L_{\mu}^2(\mathbb R^n), L_{\lambda}^2(\mathbb R^n)\big )$.  
Since $b_\epsilon\to b$ as $\epsilon\to 0+$, we obtain $b$ is  constant as well.

\begin{remark}
We note that $p=n$ is critical. 
In fact, for $p>n$, we see that there are many non-constant functions in $\mathbf B_{\nu}^p(\mathbb R^n)$. 
For example, we take $\mu(x)=|x|^\alpha$ and $\lambda(x)=|x|^\beta$, 
where $\alpha,\beta\in(-n,n)$. 
Then both $\mu,\lambda\in A_2$. 
Next, we choose $b(x)\in C_0^\infty(\mathbb R^n)$, with $b(x)\geq0$,   
{\rm supp}\,$b\subset B(x_0,1)$, where $x_0\in\mathbb R^n$ with $|x_0|>10$, 
and $|\nabla b(x)|>0$ for $x\in B(x_0,1)$. 
Then we see that all such functions $b\in \mathbf B_{\nu}^p(\mathbb R^n)$ with $\nu=(\mu/\lambda  ) ^{1/2}$.
\end{remark}


\section{Oscillation spaces associated with two weights: proof of Theorem \ref{thm:Weight-Sobolev-revised}}\label{sec:6}
\setcounter{equation}{0}

We begin with three auxiliary lemmas, to study the structure of the 
weighted oscillation sequence space $\mathcal W_{\nu}(\mathbb R^n)$. 

\begin{lemma}\label{lem:osc-sub-BMO}
Suppose $\nu\in A_2$.
Then $\mathcal W_{\nu}(\mathbb R^n)\subset {\rm BMO}_\nu(\mathbb R^n)$.
\end{lemma}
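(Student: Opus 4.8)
The goal is to show $\mathcal W_\nu(\mathbb R^n)\subset {\rm BMO}_\nu(\mathbb R^n)$. The point is that the defining sequence $\{R_Q\}_{Q\in\mathcal D}$ for $\mathcal W_\nu$ is controlled in $\ell^{n,\infty}$, and membership in $\ell^{n,\infty}$ forces in particular that the sequence is bounded, i.e. $\sup_{Q\in\mathcal D} R_Q<\infty$. This is essentially the observation that $\ell^{n,\infty}\hookrightarrow \ell^\infty$: if $\sup_k k^{1/n}a_k^*<+\infty$ then taking $k=1$ gives $a_1^*=\sup_k a_k<\infty$. So $\sup_{Q\in\mathcal D}\frac{1}{\nu(20\sqrt n Q)}\int_{20\sqrt n Q}|b-\langle b\rangle_{20\sqrt n Q}|\,dx\le \|b\|_{\mathcal W_\nu(\mathbb R^n)}<\infty$.

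\textbf{Key steps.} First I would record the elementary embedding $\|\{a_k\}\|_{\ell^\infty}\le \|\{a_k\}\|_{\ell^{n,\infty}}$, via the non-increasing rearrangement as above. Applying this to the sequence $\{R_Q\}_{Q\in\mathcal D}$ gives a uniform bound on the dyadic-inflated averages $\frac{1}{\nu(20\sqrt n Q)}\int_{20\sqrt n Q}|b-\langle b\rangle_{20\sqrt n Q}|\,dx$ over all $Q\in\mathcal D$, for each of the finitely many dyadic systems $\mathcal D=\mathcal D^\omega$, $\omega\in\{0,\frac13,\frac23\}^n$. Second, I would pass from this dyadic-ball control to the ${\rm BMO}_\nu$ norm, which is a supremum over \emph{all} balls $B\subset\mathbb R^n$. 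Given an arbitrary ball $B$, use the adjacent dyadic systems property \eqref{eqn:adj-prop}: there is $\omega$ and $Q\in\mathcal D^\omega$ with $B\subset 20\sqrt n Q$ (shrinking $Q$ down to a dyadic cube comparable to $B$ so that $|20\sqrt n Q|\lesssim_n |B|$), hence by the doubling of $\nu$ (Lemma~\ref{double}),
\[
\frac{1}{\nu(B)}\int_B |b(x)-\langle b\rangle_B|\,dx
\le \frac{2}{\nu(B)}\int_B |b(x)-\langle b\rangle_{20\sqrt n Q}|\,dx
\lesssim_n \frac{1}{\nu(20\sqrt n Q)}\int_{20\sqrt n Q}|b(x)-\langle b\rangle_{20\sqrt n Q}|\,dx,
\]
where I also used the standard fact $\int_B|b-\langle b\rangle_B|\le 2\int_B|b-c|$ for any constant $c$. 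Taking the supremum over $B$ gives $\|b\|_{{\rm BMO}_\nu(\mathbb R^n)}\lesssim_n \|b\|_{\mathcal W_\nu(\mathbb R^n)}$.

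\textbf{Main obstacle.} There is no serious obstacle here — this is a soft embedding. The only mild care needed is in the second step: one must select the dyadic cube $Q$ (from the appropriate shifted system) not just to \emph{contain} $B$ but so that $20\sqrt n Q$ contains $B$ while still having measure comparable to $|B|$; this is exactly what the parameter $20\sqrt n$ in Definition~\ref{def:Weight-Sobolev} is designed to accommodate, together with \eqref{eqn:adj-prop}. After that, the doubling property of $\nu\in A_2$ handles the comparison of $\nu(B)$ with $\nu(20\sqrt n Q)$, and one is done. (The stronger statement $\mathcal W_\nu\subset {\rm VMO}_\nu$ requires genuine work and is deferred; the present lemma only needs the $\ell^{n,\infty}\hookrightarrow\ell^\infty$ bound.)
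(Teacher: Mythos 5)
Your argument is correct in spirit and reaches the conclusion by a slightly different route than the paper: you go directly from the dyadic-inflated oscillations to arbitrary balls (using the embedding $\ell^{n,\infty}\hookrightarrow\ell^\infty$, a covering of $B$ by $20\sqrt n Q$, the constant-subtraction trick, and doubling of $\nu$ from Lemma~\ref{double}), whereas the paper first bounds the oscillation over every shifted dyadic cube $Q^\omega\in\mathcal D^\omega$ by $C\,R_Q$ for a cube $Q$ of the \emph{fixed} system $\mathcal D$ with $|Q|=|Q^\omega|$ and $Q^\omega\subset 20\sqrt n Q$, and then invokes the structure theorem ${\rm BMO}_\nu(\mathbb R^n)=\bigcap_\omega{\rm BMO}_\nu(\mathbb R^n,\mathcal D^\omega)$ from \cite{LPW}. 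Your route avoids citing that intersection theorem, which is a small simplification.

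One step in your write-up needs care, because it is exactly the point the paper's proof is designed to handle. The defining sequence in Definition~\ref{def:Weight-Sobolev} is indexed by the cubes of a \emph{single} dyadic system $\mathcal D$; your assertion that the uniform bound on $\frac{1}{\nu(20\sqrt n Q)}\int_{20\sqrt n Q}|b-\langle b\rangle_{20\sqrt n Q}|$ holds ``for each of the finitely many dyadic systems $\mathcal D^\omega$'' is not part of the hypothesis and would itself require the transfer argument. Likewise, \eqref{eqn:adj-prop} produces a cube $Q\in\mathcal D^\omega$ for some shift $\omega$, and for such a cube $R_Q$ is not among the data you control. There are two easy repairs: (a) do the transfer as in the paper, i.e.\ for $Q^\omega\in\mathcal D^\omega$ pick $Q\in\mathcal D$ with $|Q|=|Q^\omega|$ and $Q^\omega\subset 20\sqrt n Q$, and compare $\nu(20\sqrt n Q)$ with $\nu(Q^\omega)$ by doubling; or, simpler and fully within your scheme, (b) drop the shifted systems altogether: given a ball $B$, take $Q\in\mathcal D$ containing the center of $B$ with $r_B<\ell(Q)\le 2r_B$; then $B\subset 20\sqrt n Q$, $20\sqrt n Q\subset C_nB$, and $|20\sqrt n Q|\lesssim_n|B|$, so the chain of inequalities you wrote goes through with $R_Q$ taken from the fixed system, yielding $\|b\|_{{\rm BMO}_\nu(\mathbb R^n)}\lesssim\|b\|_{\mathcal W_\nu(\mathbb R^n)}$. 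With either repair the proof is complete; this is precisely the flexibility the dilation factor $20\sqrt n$ is there to provide.
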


\begin{proof}
For  each $\omega\in \left\{0,\frac{1}{3},\frac{2}{3}\right\}^n$ and $Q^\omega\in \mathcal D^\omega$, 
it follows from  Definition~\ref{def:adjacent-dyadic} that there exists $Q\in \mathcal D$ satisfying 
$|Q|=|Q^\omega|$ and $Q^\omega\subset 20\sqrt{n}Q$. 
Using the doubling property of $A_2$ weights,  there exists a positive constant $C=C(n,[\nu]_{A_2})$ such that 
$$
    {1\over \nu(Q^\omega)}\int_{Q^\omega} |b(x)-\langle b\rangle _{Q^\omega}| \,dx \leq C\, R_Q,
$$
where 
\begin{align}\label{R_Q}
    R_Q:=\frac{1}{\nu(20\sqrt{n}Q)}\int_{20\sqrt{n}Q} \left|b(x)-\langle b\rangle _{20\sqrt{n}Q}\right|dx.
\end{align}
Hence, combined with Definition \ref{def:Weight-Sobolev}, 
we see  that  for each function $b\in \mathcal W_{\nu}(\mathbb R^n)$, 
we have $ b\in \bigcap_{\omega\in \left\{0,\frac{1}{3},\frac{2}{3}\right\}^n } 
{\rm BMO}_\nu(\mathbb R^n,\mathcal D^\omega)= {\rm BMO}_\nu(\mathbb R^n)$.
\end{proof}

In fact, we further have the following argument.

\begin{lemma}
Suppose $\nu\in A_2$.
Then $\mathcal W_{\nu}(\mathbb R^n)\subset {\rm VMO}_\nu(\mathbb R^n)$.
\end{lemma}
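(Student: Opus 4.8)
The plan is to upgrade the previous lemma ($\mathcal W_{\nu}(\mathbb R^n)\subset {\rm BMO}_\nu(\mathbb R^n)$) to the ${\rm VMO}$ statement by exploiting the smallness encoded in the $\ell^{n,\infty}$ condition on the sequence $\{R_Q\}_{Q\in\mathcal D}$. First I would recall, as in Remark~\ref{remark B in VMO}, that ${\rm VMO}_{\nu}(\mathbb R^n)=\bigcap_{\omega}{\rm VMO}_{\nu}(\mathbb R^n,\mathcal D^\omega)$, so it suffices to verify the three dyadic vanishing conditions (i)--(iii) of Definition~\ref{vmo} for each shifted grid $\mathcal D^\omega$. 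As in the proof of Lemma~\ref{lem:osc-sub-BMO}, for $Q^\omega\in\mathcal D^\omega$ one dominates $\frac{1}{\nu(Q^\omega)}\int_{Q^\omega}|b-\langle b\rangle_{Q^\omega}|\,dx \leq C\,R_Q$ for an associated $Q\in\mathcal D$ with $|Q|=|Q^\omega|$ and $Q^\omega\subset 20\sqrt n Q$; thus it is enough to show that the relevant sub-collections of $\{R_Q\}_{Q\in\mathcal D}$ tend to zero.

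The key quantitative input is the following: if $\{a_k\}\in\ell^{n,\infty}$, then $a_k^*\to 0$ as $k\to\infty$ (since $k^{1/n}a_k^*$ is bounded, $a_k^*\lesssim k^{-1/n}\to0$); equivalently, for every $\delta>0$ the set $\{Q\in\mathcal D: R_Q>\delta\}$ is finite. So only finitely many cubes $Q$ have $R_Q$ exceeding any given threshold. I would then argue each of (i)--(iii) in turn. For (i), the vanishing as the side length $a\to 0$: since $\{Q: R_Q>\delta\}$ is finite, its cubes have side lengths bounded below by some $a_0(\delta)>0$, hence for $a<a_0(\delta)$ every cube of side length $a$ has $R_Q\leq\delta$; letting $\delta\to0$ gives (i). For (ii), the vanishing as $a\to\infty$: again the finitely many cubes with $R_Q>\delta$ have side lengths bounded \emph{above} by some $a_1(\delta)$, so for $a>a_1(\delta)$ any cube of side length $a$ satisfies $R_Q\le\delta$. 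For (iii), vanishing away from a fixed ball $B(x_0,a)$ as $a\to\infty$: the finitely many cubes with $R_Q>\delta$ are all contained in some fixed ball $B(x_0,a_2(\delta))$, so for $a>a_2(\delta)$ no cube with $R_Q>\delta$ lies in $\mathbb R^n\setminus B(x_0,a)$.

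The main obstacle, such as it is, is bookkeeping rather than depth: one must be careful that the passage from an arbitrary ball (or dyadic cube in $\mathcal D^\omega$) to the comparable standard dyadic cube $Q$ is uniform, so that ``finitely many bad $Q^\omega$'' follows from ``finitely many bad $Q$''. This is handled exactly as in Lemma~\ref{lem:osc-sub-BMO}: each $Q^\omega$ corresponds to an associated $Q$ with $|Q|=|Q^\omega|$ via Definition~\ref{def:adjacent-dyadic}, and the map $Q^\omega\mapsto Q$ has uniformly bounded multiplicity (each $Q$ is associated to at most $C_n$ cubes $Q^\omega$, by nestedness and the volume comparison), plus $\ell(Q^\omega)\approx\ell(Q)$ and $Q^\omega\subset 20\sqrt n Q$, so smallness/size/location of the bad $Q^\omega$ is controlled by that of the bad $Q$. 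Assembling these three points for all $\omega\in\{0,\tfrac13,\tfrac23\}^n$ (a finite set) and invoking $\bigcap_\omega{\rm VMO}_\nu(\mathbb R^n,\mathcal D^\omega)={\rm VMO}_\nu(\mathbb R^n)$ completes the proof.
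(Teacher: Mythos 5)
Your proposal is correct and follows essentially the same route as the paper: both reduce the oscillation over balls/shifted dyadic cubes to the controlling sequence $\{R_Q\}_{Q\in\mathcal D}$ exactly as in Lemma~\ref{lem:osc-sub-BMO}, and both rest on the fact that an $\ell^{n,\infty}$ sequence has only finitely many entries above any positive threshold, which yields the three vanishing conditions of Definition~\ref{vmo}. The only cosmetic differences are that you state this finiteness directly via $a_k^*\lesssim k^{-1/n}$ and route the reduction through the dyadic intersection ${\rm VMO}_\nu(\mathbb R^n)=\bigcap_\omega{\rm VMO}_\nu(\mathbb R^n,\mathcal D^\omega)$, whereas the paper covers arbitrary balls by adjacent dyadic cubes and argues the first condition by contradiction, treating the other two as similar.
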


\begin{proof}
Based on Lemma~\ref{lem:osc-sub-BMO}, for each $b\in \mathcal W_{\nu}(\mathbb R^n)$,  
it suffices to verify that $b$ satisfies the three limiting conditions 
concerning the weighted mean oscillation over balls, as listed in Definition~\ref{vmo}.

Recall that the adjacent dyadic systems possess the property that 
there exists a positive constant $C$ such that for any ball $B\subset \mathbb R^n$, 
there exists  $Q^\omega\in \mathcal{D}^{\omega}$ for some 
$\omega\in \left\{0,\frac{1}{3},\frac{2}{3}\right\}^{n}$, 
satisfying $ B\subset Q^\omega $ and $ |Q^\omega|\leq C |B|$. 
Combining the argument in Lemma~\ref{lem:osc-sub-BMO}, we have that for any ball $B$, 
there exists $Q\in \mathcal D$ such that
the weighted mean oscillation of $b$ over $B$ is governed by $R_Q$, 
which was defined as in \eqref{R_Q}. Therefore, it remains to prove that  
$$
  	\lim_{a\to0}\sup_{Q:\  \ell(Q)\leq a} R_Q=\lim_{a\to\infty}\sup_{Q:\  \ell(Q)\geq a} R_Q
	=\lim_{a\to\infty}\sup_{Q:\  Q\cap B(0,a)=\emptyset} R_Q=   0.
$$

We verify the first condition. 
That is, it  will be shown that for any $\epsilon>0$, there exists $K_\epsilon\in \mathbb N$, such that
$$
    \sup\,\left\{R_Q:\, Q\in \mathcal D,\, \ell(Q)\leq 2^{-K_\epsilon}\right\}< \epsilon. 
$$
Otherwise, there exists a sequence $\{Q_j\}_{j\in \mathbb N}$ in $\mathcal D$ 
such that $R_{Q_j}\geq \epsilon_0$ uniformly for some $\epsilon_0>0$, and $\ell(Q_j)\to 0$ as $j\to \infty$. 
Thus $\|\{R_Q\}\|_{\ell^{n,\infty}}=\sup_k \{k^{1/n}a_k^*\}=+\infty$, 
where $\{a_k^*\}$ is the non-increasing rearrangement of $\{R_Q\}_{Q\in \mathcal D}$.  
This contradicts $b\in \mathcal W_{\nu}(\mathbb R^n)$. 
Hence the first limiting condition holds. The proofs of the remaining two conditions are similar.
\end{proof}

Next we show that  $\mathcal W_{\nu}(\mathbb R^n)$ has the following properties which are crucial for Theorem \ref{thm:Weight-Sobolev-revised}.

\begin{lemma}\label{lem aux Sobolev new}
Suppose  $\lambda,\, \mu\in A_2$ and set $\nu=\mu^{1\over 2}\lambda^{-{1\over 2}}$. 
For $b\in L^1_{loc}(\mathbb R^n)$, assume that the  sequence 
$$
	  \{R_Q\}_{Q\in\mathcal D}=\left\{\frac{1}{\nu(20\sqrt{n}Q)}\int_{20\sqrt{n}Q} 
	  \left|b(x)-\langle b\rangle _{20\sqrt{n}Q}\right|dx\right\}_{Q\in\mathcal D}\in\ell^{n,\infty},
$$ 
then:
\begin{itemize}
\item [(1)] There is a smooth function $F(x,t)$ on $\mathbb R_+^{n+1}$ 
with $\lim\limits_{t\to 0} F(x,t)=b(x)$, a.e., and 
$$
    F^*(x,t):=  \sup\Big \{s|\nabla F(y,s)|:\, |y-x|<t,\  \      
    \frac{t}{2}<s<t\Big \}\cdot \int_{t/2}^t \int_{B(x,t)} 1\, \frac{dyds}{\nu(B(y,s))s}
$$
is in $ L^{n,\infty}\left (\mathbb R_+^{n+1},\,  \frac{dxdt}{t^{n+1}}\right )$, 
where $\nabla=(\nabla_x, \partial_t)$. Moreover, we have
$$ 
    \|F^*\|_{L^{n,\infty} (\mathbb R_+^{n+1},\,  \frac{dxdt}{t^{n+1}} )} 
     \lesssim \left\| \left\{ R_Q\right\}_{Q\in\mathcal D}\right\|_{\ell^{n,\infty}}.
$$

\smallskip

\item[(2)] The sequences  
$\displaystyle\bigg\{\bigg[{1\over \mu(20\sqrt{n}Q)} \int_{20\sqrt{n}Q} 
    \left|b(x)- \langle b\rangle_{20\sqrt{n}Q}\right|^{2} \lambda(x) dx\bigg]^{1\over 2}
    \bigg\}_{Q\in\mathcal D}  $
and	  
$$
    \bigg\{ \bigg[\frac{1}{\lambda^{-1}(20\sqrt{n}Q)} 
    \int_{20\sqrt{n}Q} \left|b-\langle b\rangle_{20\sqrt{n}Q}\right|^{2} 
    \mu^{-1} (x) \, dx \bigg]^{1\over 2} \bigg\}_{Q\in\mathcal D}
$$
are both in $\ell^{n,\infty}$ with norms dominated by $\left\| \left\{ R_Q\right\}_{Q\in\mathcal D}\right\|_{\ell^{n,\infty}}$.
\end{itemize}
\end{lemma}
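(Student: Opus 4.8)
\medskip
\noindent\textbf{Proof outline.} The plan is to realise $b$ as the boundary trace of a smooth function on the upper half-space and to deduce both conclusions from a single pointwise gradient estimate. Fix $\phi\in C_c^\infty(\R^n)$ with $\phi\ge0$ and $\int\phi=1$, set $\phi_t(z)=t^{-n}\phi(z/t)$, and define $F(x,t):=\phi_t*b(x)$ on $\R_+^{n+1}$. Then $F\in C^\infty(\R_+^{n+1})$ and $F(\cdot,t)\to b$ a.e.\ as $t\to0^+$ by Lebesgue differentiation. Since $\nabla_y\phi$ and $s\,\partial_s\phi_s=-s^{-n}(n\phi+y\!\cdot\!\nabla\phi)(\cdot/s)$ have zero integral and are supported in a fixed ball $B(0,c_0)$, subtracting $\langle b\rangle_{B(y,c_0s)}$ and using that $\nu$ is doubling together with Definition~\ref{def:Weight-Sobolev} gives
\[
  s\,|\nabla F(y,s)|\ \lesssim\ \frac1{|B(y,c_0s)|}\int_{B(y,c_0s)}\bigl|b-\langle b\rangle_{B(y,c_0s)}\bigr|\,dx\ \lesssim\ \frac{\nu(B(y,c_0s))}{|B(y,c_0s)|}\,\max_{Q'\in\mathcal E(y,s)}R_{Q'},
\]
where $\nabla=(\nabla_x,\partial_t)$ and $\mathcal E(y,s)$ is the dimensionally bounded family of $Q'\in\mathcal D$ with $\ell(Q')\simeq s$ and $B(y,c_0s)\subset 20\sqrt n\,Q'$, nonempty by \eqref{eqn:adj-prop}.

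\noindent\textbf{Part (1).} Doubling gives $\nu(B(y,s))\simeq\nu(B(x,t))$ whenever $|y-x|<t$ and $t/2<s<t$, so the integral factor in the definition of $F^*$ satisfies $\int_{t/2}^t\!\int_{B(x,t)}\frac{dy\,ds}{\nu(B(y,s))\,s}\simeq\frac{|B(x,t)|}{\nu(B(x,t))}$. Combining this with the gradient estimate and one more use of doubling yields $F^*(x,t)\lesssim\max\{R_{Q'}:Q'\in\mathcal D,\ \ell(Q')\simeq t,\ Q'\cap B(x,2t)\neq\varnothing\}$. Hence, if $F^*(x,t)>\lambda$ then $(x,t)$ lies in a ``tent'' $T(Q')=\{(x,t):x\in C_1Q',\ C_1^{-1}\ell(Q')<t<C_1\ell(Q')\}$ over some $Q'$ with $R_{Q'}>c\lambda$; since $\int_{T(Q')}\frac{dx\,dt}{t^{n+1}}\simeq_n1$, we obtain $\bigl(\frac{dx\,dt}{t^{n+1}}\bigr)(\{F^*>\lambda\})\lesssim\#\{Q'\in\mathcal D:R_{Q'}>c\lambda\}\lesssim\lambda^{-n}\|\{R_Q\}\|_{\ell^{n,\infty}}^n$, which is the asserted bound on $\|F^*\|_{L^{n,\infty}}$.

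\noindent\textbf{Part (2).} Pick a dyadic cube $\widetilde P$ in an adjacent dyadic system with $20\sqrt n\,Q\subset\widetilde P$ and $|\widetilde P|\simeq|20\sqrt nQ|$; by Cauchy--Schwarz and the $A_2$ properties it suffices to control $T_{\widetilde P}:=\frac1{\mu(\widetilde P)}\int_{\widetilde P}|b-\langle b\rangle_{\widetilde P}|^2\lambda$. Expanding $b-\langle b\rangle_{\widetilde P}=\sum_{P'\subseteq\widetilde P}\Delta_{P'}b$ and applying the $A_2$-weighted dyadic square-function estimate for $\lambda$,
\[
  T_{\widetilde P}\ \simeq\ \frac1{\mu(\widetilde P)}\sum_{P'\subseteq\widetilde P}\int_{P'}|\Delta_{P'}b|^2\lambda\ =\ \sum_{P'\subseteq\widetilde P}\frac{\mu(P')}{\mu(\widetilde P)}\,d_{P'}^2,\qquad d_{P'}:=\Bigl(\tfrac1{\mu(P')}\int_{P'}|\Delta_{P'}b|^2\lambda\Bigr)^{1/2}.
\]
Using $\int_{P'}|\Delta_{P'}b|\le\int_{P'}|b-\langle b\rangle_{P'}|$, equivalence of norms over the children of $P'$, and the $A_2$ relations one gets $d_{P'}\lesssim\nu(P')^{-1}\int_{P'}|b-\langle b\rangle_{P'}|$, hence $d_{P'}\lesssim\max\{R_{Q'}:\ell(Q')\simeq\ell(P'),\,P'\subset20\sqrt nQ'\}$ by \eqref{eqn:adj-prop} and doubling; a counting argument of the kind used in Theorem~\ref{thm:Besov-Intersect} then gives $\|\{d_{P'}\}_{P'\in\mathcal D}\|_{\ell^{n,\infty}}\lesssim\|\{R_Q\}\|_{\ell^{n,\infty}}$. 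It remains to count $\{Q:T_{\widetilde P}>c\lambda^2\}$. By the reverse doubling of $\mu\in A_2$ one has $\mu(P')/\mu(\widetilde P)\le\theta^{g}$ where $g=g(P',\widetilde P)\ge0$ is the generation gap, so $T_{\widetilde P}\le\sum_{g\ge0}\theta^{g}D_g(\widetilde P)^2$ with $D_g(\widetilde P):=\sup\{d_{P'}:P'\subseteq\widetilde P,\ g(P',\widetilde P)=g\}$. Pigeonholing with weights $\delta^{g}$ for a fixed $\delta\in(\theta,1)$, $T_{\widetilde P}>c\lambda^2$ forces $D_g(\widetilde P)>c'\lambda(\delta/\theta)^{g/2}$ for some $g$; since $d_{P'}\le\|\{d_{P'}\}\|_{\ell^\infty}$ this restricts $g$ to a bounded range, and for each fixed $g$ the weak-type bound for $\{d_{P'}\}$ together with the boundedly many cubes $20\sqrt nQ$ having a given $P'$ as a gap-$g$ subcube gives a count $\lesssim\lambda^{-n}(\delta/\theta)^{-gn/2}\|\{d_{P'}\}\|_{\ell^{n,\infty}}^n$. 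Summing the geometric series in $g$ yields $\#\{Q:T_{\widetilde P}>c\lambda^2\}\lesssim\lambda^{-n}\|\{R_Q\}\|_{\ell^{n,\infty}}^n$, i.e.\ the first sequence in (2) lies in $\ell^{n,\infty}$ with the claimed norm; the second sequence is handled identically since $\mu^{-1},\lambda^{-1}\in A_2$ and $\nu^{-1}=(\mu^{-1})^{1/2}(\lambda^{-1})^{-1/2}$.

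\noindent\textbf{Main difficulty.} Part (1) is essentially bookkeeping once one notices that the tent $T(Q')$ has $\frac{dx\,dt}{t^{n+1}}$-measure comparable to $1$. The genuine obstacle is the counting in Part (2): a single large $d_{P'}$ inflates $T_{\widetilde P}$ for infinitely many ancestors $\widetilde P\supset P'$, and it is only the combination of the geometric decay $\theta^{g}$ coming from reverse doubling with a carefully weighted pigeonhole (weights $\delta^{g}$ with $\theta<\delta<1$) that both confines the relevant generation gaps to a bounded range and makes the $\ell^{n,\infty}$ count converge; throughout, one must also pass from the non-dyadic cubes $20\sqrt nQ$ to honest dyadic cubes via the adjacent dyadic systems.
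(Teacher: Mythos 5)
Your Part (1) follows essentially the paper's own argument (mollify, bound $s|\nabla F(y,s)|$ by the local oscillation using the cancellation of $\nabla\varphi$, note the Whitney box/tent over a dyadic cube has $\frac{dx\,dt}{t^{n+1}}$-measure $\simeq 1$, and count cubes with $R_Q$ large), and that part is fine. Part (2), however, contains a genuine gap at the key counting step. From $T_{\widetilde P}=\sum_{P'\subseteq\widetilde P}\frac{\mu(P')}{\mu(\widetilde P)}d_{P'}^2$ you claim $T_{\widetilde P}\le\sum_{g\ge0}\theta^{g}D_g(\widetilde P)^2$ ``by reverse doubling''. This is false: the cubes $P'$ at generation gap $g$ tile $\widetilde P$, so $\sum_{P'\,:\,g(P',\widetilde P)=g}\mu(P')/\mu(\widetilde P)=1$, and once you replace each $d_{P'}^2$ by the generation supremum $D_g(\widetilde P)^2$ all you can conclude is $T_{\widetilde P}\le\sum_{g\ge0}D_g(\widetilde P)^2$; the per-cube decay $\mu(P')/\mu(\widetilde P)\le\theta^{g}$ cannot be reinserted after the supremum has been taken (e.g. $d_{P'}\equiv 1$ on a full generation makes that generation's contribution equal to $1$, not $\theta^{g}$). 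Without the $\theta^{g}$ factor the weighted pigeonhole, the geometric series in $g$, and the resulting bound $\#\{Q:T_{\widetilde P}>c\lambda^2\}\lesssim\lambda^{-n}\|\{d_{P'}\}\|_{\ell^{n,\infty}}^n$ all collapse; indeed $\sum_g D_g^2$ need not even be finite when $\{d_{P'}\}$ is merely in $\ell^{n,\infty}$. (A secondary inaccuracy: even granting your inequality, the admissible $g$'s are bounded only in terms of $\lambda$, not uniformly.)

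What you actually need at this point is precisely the statement that the discrete averaging operator $\{d_{P'}\}\mapsto\bigl\{\bigl(\sum_{P'\subseteq\widetilde P}\frac{\mu(P')}{\mu(\widetilde P)}d_{P'}^2\bigr)^{1/2}\bigr\}$ is bounded on $\ell^{n,\infty}$, and the paper proves exactly this kind of bound (for its operators $\widetilde{\mathsf M}$ and $\mathsf M'$) by a different mechanism: it establishes $\ell^p$ boundedness for every $p\ge1$ by fixing the \emph{small} cube $P'$ and summing over its boundedly many large ``ancestors'' $\widetilde P$ at each scale, where the factor $(\mu(P')/\mu(\widetilde P))^{\text{power}}$ really does decay geometrically in the gap, and then obtains the $\ell^{n,\infty}$ bound by Lorentz-space interpolation. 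If you replace your pigeonhole by this dual summation plus interpolation, your route (Haar square function instead of the paper's smooth extension, which the paper uses to reduce to the same discrete operators) can be repaired; as written, the proof of Part (2) does not go through.
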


\begin{proof}
We will begin by proving (1), and then apply it to show  (2).

Let $\varphi$ be a smooth function on $\mathbb R^n$, which is supported in $\big[-1, 1\big ]^n$ 
and satisfies $\int_{\mathbb R^n} \varphi(x) \, dx=1$.

Let $\varphi_t(x)=t^{-n}\varphi(\frac{x}{t})$ and define $F(x,t)=\varphi_t* b(x)$ for $t>0$. 
For each $(x,t)\in Q\times \big[\frac{\ell(Q)}{2},\ell(Q)\big ]$, where $Q\in \mathcal D$, 
let $T_{x,t}:=\{(y,s)\in \mathbb R_+^{n+1}:\, |y-x|<t,\  \      \frac{t}{2}<s<t \}$, we have 
\begin{align*}
	   \sup_{(y,s)\in T_{x,t}}  s|\nabla F(y,s) | 
	   &=\sup_{(y,s)\in T_{x,t}}  s\left|\int_{y-z\in [-\frac{s}{2},\frac{s}{2} ]^n } \left (b(z)
	   -\langle b\rangle _{20\sqrt{n}}\right ) \nabla \varphi_s(y-z)\, dz    \right|\\
	   &\leq c\,    \frac{1}{\ell(Q)^n} \int_{20\sqrt{n}Q}   \left|b(z)-\langle b\rangle _{20\sqrt{n}}\right| dz,
\end{align*}
where the positive constant $c$ is independent of $Q$, $(x,t)$ and $(y,s)$. 
As a consequence, there exists a positive constant $C=C(c, \nu, n)$ such that 
for each $Q\in \mathcal D$ and  $(x,t)\in Q\times \big[\frac{\ell(Q)}{2},\ell(Q)\big ]$,
$$
    F^*(x,t)\leq C\cdot  R_Q.
$$
  
Note that 
$$
     \mathbb R_+^{n+1}=\bigcup_{Q\in \mathcal D} Q\times \Big [\frac{\ell(Q)}{2},\ell(Q)\Big ]
$$
up to a zero measure set, and the interiors of 
$\left \{Q\times \big[\frac{\ell(Q)}{2},\ell(Q)\big] \right\}_{Q\in \mathcal D}$ 
are mutually disjoint. Therefore,
\begin{align*}
	\|F^*\|_{L^{n,\infty}\left (\mathbb R_+^{n+1},\,  \frac{dxdt}{t^{n+1}}\right )} 
	&=\sup_{\lambda >0} \,\lambda \left(\iint_{\{(x,t):\, F^*(x,t)>\lambda \}}   1\, \frac{dxdt}{t^{n+1}}\right)^{1\over n}\\
	&=\sup_{\lambda >0} \,\lambda \left(\sum_{Q\in \mathcal D} 
	\int_{\ell(Q)/2}^{\ell(Q)}\int_Q  \mathsf{1}_{\{ F^*>\lambda \}} \, \frac{dxdt}{t^{n+1}}\right)^{1\over  n}\\
	&\lesssim \sup_{\lambda >0}\lambda \left(  \text{the number of }  \Big \{Q\in\mathcal D:\,   
	\sup_{(x,t)\in Q\times [\frac{\ell(Q)}{2},\ell(Q)]}   F^*(x,t)  >\lambda \Big \}  \right)^{1\over n}\\
	&\leq \sup_{\lambda >0}\lambda \left(  \text{the number of }  \Big \{Q\in\mathcal D:\,   
	R_Q >C^{-1}\lambda \Big \}  \right)^{1\over n}\\
	&\lesssim  \left \|\left\{ R_Q\right\}_{Q\in\mathcal D}\right \|_{ \ell^{n,\infty}},
\end{align*}
where the last inequality follows from the fact that 
$$
    \left \|\left\{ R_Q\right\}_{Q\in\mathcal D}\right \|_{ \ell^{n,\infty}}
    =\sup_{k\geq 1} k^{1/n} R_k^*\geq \sup_{\lambda >0}\lambda 
    \left(  \text{the number of }  \Big \{Q\in\mathcal D:\,   R_Q \geq \lambda \Big \}  \right)^{1/n}
$$
with $\{R_k^*\}_{k\geq 1}$  the non-increasing rearrangement of $\{R_Q\}_{Q\in \mathcal D}$.
That is, $F^* \in  L^{n,\infty}\left (\mathbb R_+^{n+1},\,  \frac{dxdt}{t^{n+1}}\right )$, 
and we complete the proof of (1).

It remains to show (2).  Observe that 
\begin{align*}
	&\bigg[{1\over \mu(20\sqrt{n}Q)} \int_{20\sqrt{n}Q} 
	\left|b(x)- \langle b\rangle_{20\sqrt{n}Q}\right|^{2} \lambda(x) dx\bigg]^{1\over 2}\\
	\leq\, &   \bigg[{1\over \mu(20\sqrt{n}Q)} \int_{20\sqrt{n}Q} 
	\left|b(x)-  F(c_Q, \ell(Q))\right|^{2} \lambda(x) dx\bigg]^{1\over 2}  
	+ \left |\langle b\rangle_{20\sqrt{n}Q} - F(c_Q,\ell(Q))\right | 
	\left( \frac{\lambda(20\sqrt{n}Q)}{\mu(20\sqrt{n}Q)}\right)^{1\over 2}\\
	\lesssim\, &   \bigg[{1\over \mu(20\sqrt{n}Q)} \int_{20\sqrt{n}Q} 
	\left|b(x)-  F(c_Q, \ell(Q))\right|^{2} \lambda(x) dx\bigg]^{1\over 2}  \\
	&   +\left( \frac{1}{|20\sqrt{n}Q|}  \int_{20\sqrt{n}Q} 
	\left| b(x)-F(c_Q,\ell(Q))\right| \,dx \right) \frac{|20\sqrt{n}Q|}{\nu(20\sqrt{n}Q)}\\
	=:\,&   \rm{I}_Q+ \rm{II}_Q.
\end{align*}

For the term $\rm{I}_Q$, 
writing $b(x)-F(c_Q,\ell(Q))= F(x,\ell(Q))-F(c_Q,\ell(Q))  + b(x)-F(x,\ell(Q))$ to see
\begin{align*}
	{\rm{I}_Q} \lesssim & \sup_{x\in 20\sqrt{n}Q} \left| F(x,\ell(Q))-F(c_Q,\ell(Q))\right| 
	\frac{|20\sqrt{n}Q|}{\nu(20\sqrt{n}Q)} \\
	&\quad +\bigg[{1\over \mu(20\sqrt{n}Q)} \int_{20\sqrt{n}Q} 
	\left(\int_0^{\ell(Q)} \big|\partial_t F(x,t)\big| dt\right)^{2} \lambda(x) dx\bigg]^{1\over 2}.
\end{align*}

Similarly,
\begin{align*}
	{\rm{II}_Q} \lesssim & \sup_{x\in 20\sqrt{n}Q} \left| F(x,\ell(Q))-F(c_Q,\ell(Q))\right| 
	\frac{|20\sqrt{n}Q|}{\nu(20\sqrt{n}Q)} +   {1\over \nu(20\sqrt{n}Q)} \int_{20\sqrt{n}Q} 
	\int_0^{\ell(Q)} \big|\partial_t F(x,t)\big| \,dt  dx.
\end{align*}
Obviously, 
\begin{align*}
     &\sup_{x\in 20\sqrt{n}Q} \left| F(x,\ell(Q))-F(c_Q,\ell(Q))\right| \frac{|20\sqrt{n}Q|}{\nu(20\sqrt{n}Q)} \\
     &\lesssim\, \sup_{x\in 20\sqrt{n}Q} \ell(Q)  \left|\nabla F(x,\ell(Q))\right|   \frac{|20\sqrt{n}Q|}{\nu(20\sqrt{n}Q)}\\
     &\lesssim  \sum_{\substack{ P\in \mathcal D:\,   \ell(P)=\ell(Q) \\ P\cap 20\sqrt{n}Q\neq\emptyset }}   R_P.
\end{align*}
Note that the operator $\mathsf M$ defined on the sequence $\mathscr{R}=\{R_Q\}_{Q\in \mathcal D}$ by 
$$
    \mathsf M(\mathscr R)(Q) := 
    \sum_{\substack{ P\in \mathcal D:\,   \ell(P)=\ell(Q) \\ P\cap 20\sqrt{n}Q\neq\emptyset }}   R_P
$$ 
is a bounded map of $\ell^{n,\infty}$ to itself. 
Hence it suffices to consider 
$$
    {\rm{I}_Q'}:=\Bigg[{1\over \mu(20\sqrt{n}Q)} \int_{20\sqrt{n}Q} 
    \left(\int_0^{\ell(Q)} \big|\partial_t F(x,t)\big| dt\right)^{2} \lambda(x) dx\Bigg]^{1\over 2}
$$
and its analogy
$$
    {\rm{II}_Q'}:=   {1\over \nu(20\sqrt{n}Q)} 
    \int_{20\sqrt{n}Q} \int_0^{\ell(Q)} \big|\partial_t F(x,t)\big| \,dt  dx.
$$

Let $\widetilde F(y,t)=\sup_{   t/2<s<t}s\big|\nabla F(y,s)\big|$,
then $\displaystyle F^*(x,t)=  \sup_{|y-x|<t} \widetilde F(y,t)  \cdot  
\int_{t/2}^t \int_{B(x,t)} 1\, \frac{dyds}{\nu(B(y,s))s}$.
By the convexity of the power function,
\begin{align*}
	{\rm{I}_Q'} \leq\, & \bigg[{1\over \mu(20\sqrt{n}Q)} \int_{20\sqrt{n}Q} \bigg( 
	\sum_{j=-\infty}^{\log_2 \ell(Q)} \sup_{1/2\leq 2^{-j}t \leq 1} t \big|\nabla F(x,t)\big| \bigg)^{2} 
	\lambda(x) dx\bigg]^{1\over 2}\\
	\leq\, &  \bigg[{1\over \mu(20\sqrt{n}Q)} \int_{20\sqrt{n}Q} \bigg( 
	\sum_{j=-\infty}^{\log_2 \ell(Q)}  \widetilde F(x,2^j)
	\bigg)^{2} \lambda(x) dx\bigg]^{1\over 2}\\
	=\,&  \bigg[{1\over \mu(20\sqrt{n}Q)} \int_{20\sqrt{n}Q} \bigg( 
	\sum_{j=-\infty}^{\log_2 \ell(Q)}  \left[ (\log_2 \ell(Q))-j+1\right]^{-2} 
	\left[ (\log_2 \ell(Q))-j+1\right]^{2}\widetilde F(x,2^j)
	\bigg)^{2} \lambda(x) dx\bigg]^{1\over 2}\\
	\lesssim \,&  \bigg[{1\over \mu(20\sqrt{n}Q)} \int_{20\sqrt{n}Q} 
	\sum_{j=-\infty}^{\log_2 \ell(Q)} \left[ (\log_2 \ell(Q))-j+1\right]^{-2} 
	\bigg(  \left[ (\log_2 \ell(Q))-j+1\right]^{2}\widetilde F(x,2^j)
	\bigg)^{2} \lambda(x) dx\bigg]^{1\over 2}\\
	=\,& \bigg[{1\over \mu(20\sqrt{n}Q)} \int_{20\sqrt{n}Q} 
	\sum_{j=-\infty}^{\log_2 \ell(Q)}  \left[ (\log_2 \ell(Q))-j+1\right]^{2}  
	{\widetilde F(x,2^j) }^{2} \lambda(x) dx\bigg]^{1\over 2}\\
	\leq \,& \bigg[{1\over \mu(20\sqrt{n}Q)} 
	\sum_{\substack{P\in \mathcal D: \  \ell(P)\leq \ell(Q) \\ P\cap 20\sqrt{n}Q\neq \emptyset }}    
	\lambda(P) \left(\log_2 \frac{ \ell(Q)}{\ell(P)} +1\right)^{2}  
	\sup_{x\in P}{\widetilde F(x,\ell(P)) }^{2}  \bigg]^{1\over 2}.
\end{align*}
By observing that 
$$
    \sup_{x\in P}{\widetilde F(x,\ell(P)) }^{2} \leq \sup_{x\in P}  {F^*(x,\ell(P))} ^2 
    \cdot \left(\frac{\nu(P)}{|P|} \right)^2\lesssim \left(R_P\right)^2 \cdot 
    \frac{\lambda^{-1}(P)\mu(P)}{|P|^2} \approx \left(R_P\right)^2 \frac{\mu(P)}{\lambda(P)},
$$
we have 
\begin{align*}
	{\rm{I}_Q'} \lesssim\, & \Bigg[{1\over \mu(Q)} 
	\sum_{\substack{P\in \mathcal D: \  \ell(P)\leq \ell(Q) \\ P\cap 20\sqrt{n}Q\neq \emptyset }}    
	\mu(P) \left(\log_2 \frac{ \ell(Q)}{\ell(P)} +1\right)^{2}  \left(R_P\right)^2\Bigg]^{1\over 2}.
\end{align*}
We claim that the operator $\widetilde{\mathsf M}$  defined on the sequence $\mathscr{R}=\{R_Q\}_{Q\in \mathcal D}$ by 
$$
    \widetilde{\mathsf M}(\mathscr R)(Q):= \Bigg[{1\over \mu(Q)} 
    \sum_{\substack{P\in \mathcal D: \  \ell(P)\leq \ell(Q) \\ P\cap 20\sqrt{n}Q\neq \emptyset }}    
    \mu(P) \left(\log_2 \frac{ \ell(Q)}{\ell(P)} +1\right)^{2}  \left(R_P\right)^2\Bigg]^{1\over 2}  
$$ is a bounded map of $\ell^{n,\infty}$ to itself. 
By interpolation (see for example \cite[Section 5.3]{BL}), 
it suffices to show that $ \widetilde{\mathsf M}$ is bounded on $\ell^p$ for $p\geq1$.

To see this, note that for $1\leq p\leq 2$, by definition we have
\begin{align*}
    \left\| \left\{ \widetilde{\mathsf M}(\mathscr R)(Q)\right\}_{Q\in \mathcal D}\right\|_{\ell^p}
    &=\left( \sum_{Q\in\mathcal D}  \widetilde{\mathsf M}(\mathscr R)(Q)^p\right)^{1\over p}\\
    &= \Bigg( \sum_{Q\in\mathcal D} \bigg[ 
    \sum_{\substack{P\in \mathcal D: \  \ell(P)\leq \ell(Q) \\ P\cap 20\sqrt{n}Q\neq \emptyset }}  
     {\mu(P)\over \mu(Q)}  \bigg(\log_2 \frac{ \ell(Q)}{\ell(P)} +1\bigg)^{2}  
     \left(R_P\right)^2\bigg]^{p\over 2}\Bigg)^{1\over p}\\
     &\leq \Bigg( \sum_{Q\in\mathcal D}  \ 
     \sum_{\substack{P\in \mathcal D: \  \ell(P)\leq \ell(Q) \\ P\cap 20\sqrt{n}Q\neq \emptyset }}   
     \bigg({\mu(P)\over \mu(Q)} \bigg)^{p\over 2} \bigg(\log_2 \frac{ \ell(Q)}{\ell(P)} +1\bigg)^{p}  
     \left(R_P\right)^p\Bigg)^{1\over p}\\
     &\leq \Bigg( \sum_{P\in\mathcal D}  \ 
     \sum_{\substack{Q\in \mathcal D: \  \ell(P)\leq \ell(Q) \\ P\cap 20\sqrt{n}Q\neq \emptyset }}   
     \bigg({\mu(P)\over \mu(Q)} \bigg)^{p\over 2} \bigg(\log_2 \frac{ \ell(Q)}{\ell(P)} +1\bigg)^{p}  
     \left(R_P\right)^p\Bigg)^{1\over p}\\
     &\lesssim \Bigg( \sum_{P\in\mathcal D}  \  \left(R_P\right)^p\Bigg)^{1\over p}.
\end{align*}

Now we consider the case $ p>2$. 
The doubling property of $\mu\in A_2$ ensures that there exists $\theta=\theta(n,[\mu]_{A_2})\in (0,1)$, such that for any $Q\in \mathcal D$ and  $P\cap 20\sqrt{n}Q\neq\emptyset$ with $\ell(P)=2^{-k}\ell(Q)$,  we have $\mu(P)\leq \theta^k \mu(Q)$. Note that for any $p>2$, then $(\frac{p}{2})'>1$ and we can choose some $\alpha\in (0,1)$ such that $\alpha (\frac{p}{2})' >1$.  Hence 
\begin{align*}
    \sum_{\substack{P\in \mathcal D: \  \ell(P)\leq \ell(Q) \\ P\cap 20\sqrt{n}Q\neq \emptyset }}   \bigg({\mu(P)\over \mu(Q)} \bigg)^{\alpha  (\frac{p}{2})'} \lesssim \sum_{k=0}^\infty \sum_{\substack{P\in \mathcal D: \  \ell(P)=2^{-k} \ell(Q) \\ P\cap 20\sqrt{n}Q\neq \emptyset }}    \theta^{(\alpha  (\frac{p}{2})'-1)k }\frac{\mu(P)}{\mu(Q)}\lesssim  \sum_{k=0}^\infty  \theta^{(\alpha  (\frac{p}{2})'-1)k }\lesssim 1.
\end{align*} 
Based on this, we obtain that 
\begin{align*}
    & \left\| \left\{ \widetilde{\mathsf M}(\mathscr R)(Q)\right\}_{Q\in \mathcal D}\right\|_{\ell^p}\\
    \leq\, & \Bigg( \sum_{Q\in\mathcal D}   \Bigg[  \bigg(\sum_{\substack{P\in \mathcal D: \  \ell(P)\leq \ell(Q) \\ P\cap 20\sqrt{n}Q\neq \emptyset }}   
    \left({\mu(P)\over \mu(Q)} \right)^{\alpha  (\frac{p}{2})'}\bigg)^{\frac{p-2}{2}}\cdot
    \sum_{\substack{P\in \mathcal D: \  \ell(P)\leq \ell(Q) \\ P\cap 20\sqrt{n}Q\neq \emptyset }}   
    \left({\mu(P)\over \mu(Q)}\right)^{\frac{(1-\alpha)p}{2}}  \bigg(\log_2 \frac{ \ell(Q)}{\ell(P)} +1\bigg)^{p}  
    \left(R_P\right)^p \Bigg]\Bigg)^{1\over p}\\
    \lesssim\,  & \Bigg( \sum_{Q\in\mathcal D}  \  
    \sum_{\substack{P\in \mathcal D: \  \ell(P)\leq \ell(Q) \\ P\cap 20\sqrt{n}Q\neq \emptyset }}   
    \bigg({\mu(P)\over \mu(Q)} \bigg) ^{\frac{(1-\alpha)p}{2}}\bigg(\log_2 \frac{ \ell(Q)}{\ell(P)} +1\bigg)^{p}  
    \left(R_P\right)^p\Bigg)^{1\over p}\\
    =\,&\Bigg( \sum_{P\in\mathcal D}  \ 
    \sum_{\substack{Q\in \mathcal D: \  \ell(P)\leq \ell(Q) \\ P\cap 20\sqrt{n}Q\neq \emptyset }}   
    \bigg({\mu(P)\over \mu(Q)} \bigg)^{\frac{(1-\alpha)p}{2}} \bigg(\log_2 \frac{ \ell(Q)}{\ell(P)} +1\bigg)^{p}  
    \left(R_P\right)^p\Bigg)^{1\over p}\\
    \lesssim \,&\Bigg( \sum_{P\in\mathcal D}  \  \left(R_P\right)^p\Bigg)^{1\over p}.
\end{align*}

Similarly, 
\begin{align*}
    {\rm{II}_Q'}\lesssim   {1\over \nu(20\sqrt{n}Q)} 
    \sum_{\substack{P\in \mathcal D: \  \ell(P)\leq \ell(Q) \\ P\cap 20\sqrt{n}Q\neq \emptyset }}  
    |P|  \sup_{x\in P}{\widetilde F(x,\ell(P)) }
    \lesssim    {1\over \nu(Q)} 
    \sum_{\substack{P\in \mathcal D: \  \ell(P)\leq \ell(Q) \\ P\cap 20\sqrt{n}Q\neq \emptyset }}  \nu(P)  R_P,
\end{align*}
and  an analogous of the argument for the $\ell^p$-boundedness of $\widetilde{\mathsf M}$  can deduce that 
the operator $\mathsf M'$  defined on the sequence $\mathscr{R}=\{R_Q\}_{Q\in \mathcal D}$ by 
$$
    \mathsf M'(\mathscr R)(Q):=   {1\over \nu(Q)} 
    \sum_{\substack{P\in \mathcal D: \  \ell(P)\leq \ell(Q) \\ P\cap 20\sqrt{n}Q\neq \emptyset }}  \nu(P)  R_P
$$ 
is also a bounded map of $\ell^{n,\infty}$ to itself.

From the above, we obtain that the sequence  
$\displaystyle\left\{\bigg[{1\over \mu(20\sqrt{n}Q)} \int_{20\sqrt{n}Q} 
\left|b(x)- \langle b\rangle_{20\sqrt{n}Q}\right|^{2} \lambda(x) dx\bigg]^{1\over 2}\right\}_{Q\in\mathcal D}$
is in $\ell^{n,\infty}$ and its norm  is dominated by $\left\| \left\{ R_Q\right\}_{Q\in\mathcal D}\right\|_{\ell^{n,\infty}}$.

In the end,  observe that $\lambda,\,\lambda^{-1},\, \mu,\,\mu^{-1}\in A_2$, 
By definition,  for each $Q\in \mathcal D$,
$$
    \frac{|Q|}{\nu(Q)}
    \approx \frac{\nu^{-1} (Q)}{|Q|}
    \approx \frac{\lambda  (Q)^{1\over 2} 
    \mu^{-1}(Q)^{1\over 2}} {|Q|}
    \approx \frac{\mu^{-1}(Q)}{\lambda^{-1}(Q)} ,
$$
this allows that the argument of the sequence  
$$  \left\{ \bigg[\frac{1}{\lambda^{-1}(20\sqrt{n}Q)} \int_{20\sqrt{n}Q} 
\left|b-\langle b\rangle_{20\sqrt{n}Q}\right|^{2} 
\mu^{-1}(x)\, dx \bigg]^{1\over 2} \right\}_{Q\in\mathcal D}
$$
is similar, and we skip it. The proof is complete.
\end{proof}

\begin{proof}[Proof of Theorem \ref{thm:Weight-Sobolev-revised}]

We first note that by H\"older's inequality, we have 
\begin{align*}
    \bigg\|\left\{R_Q\right\}_{Q\in\mathcal D}\bigg\|_{ \ell^{n,\infty}} 
    \lesssim      
\bigg \|  \bigg\{\bigg[{1\over \mu(20\sqrt{n}Q)} \int_{20\sqrt{n}Q} 
    \left|b(x)- \langle b\rangle_{20\sqrt{n}Q}\right|^{2} \lambda(x) dx\bigg]^{1\over 2}
    \bigg\}_{Q\in\mathcal D}  \bigg \|_{ \ell^{n,\infty}}
\end{align*}
and
\begin{align*}
    \bigg\|\left\{R_Q\right\}_{Q\in\mathcal D}\bigg\|_{ \ell^{n,\infty}} 
    \lesssim      
     \bigg \| \bigg\{ \bigg[\frac{1}{\lambda^{-1}(20\sqrt{n}Q)} 
    \int_{20\sqrt{n}Q} \left|b-\langle b\rangle_{20\sqrt{n}Q}\right|^{2} 
    \mu^{-1} (x) \, dx \bigg]^{1\over 2} \bigg\}_{Q\in\mathcal D}\bigg \|_{ \ell^{n,\infty}}.
\end{align*}

Next, from Lemma \ref{lem aux Sobolev new}, we have the reverse inequalities of the above two inequalities. 
Thus, the proof of Theorem \ref{thm:Weight-Sobolev-revised} is complete.
\end{proof}

\begin{remark}\label{rem:equiv-Lorentz}
Combining the interpolation theory of Lorentz spaces  (see for example \cite[Section 5.3]{BL}), 
the argument in Lemma \ref{lem aux Sobolev new} indeed deduces the equivalence that 
\begin{align*}
    \left\|\left\{R_Q\right\}_{Q\in\mathcal D}\right\|_{\ell^{p,q}} 
    &\approx    
\bigg \|  \bigg\{\bigg[{1\over \mu(20\sqrt{n}Q)} \int_{20\sqrt{n}Q} 
    \left|b(x)- \langle b\rangle_{20\sqrt{n}Q}\right|^{2} \lambda(x) dx\bigg]^{1\over 2}
    \bigg\}_{Q\in\mathcal D}  \bigg \|_{\ell^{p,q}}\\
&\approx    
     \bigg \| \bigg\{ \bigg[\frac{1}{\lambda^{-1}(20\sqrt{n}Q)} 
    \int_{20\sqrt{n}Q} \left|b-\langle b\rangle_{20\sqrt{n}Q}\right|^{2} 
    \mu^{-1} (x) \, dx \bigg]^{1\over 2} \bigg\}_{Q\in\mathcal D}\bigg \|_{ \ell^{p,q}}
\end{align*}
 for any $0<p<\infty$ and $0<q\leq \infty$.
\end{remark}

\section{Weak Schatten class estimate at critical index $p=n$: proof of Theorem  \ref{thm:main3}}\label{sec:thm3} 
\setcounter{equation}{0}

Throughout this section, let $R_j$ be the  $j$th  Riesz transform on $\mathbb R^n$ with $n\geq 2$, $j=1,\ldots, n$. 
Suppose  $\lambda,\, \mu\in A_2$ and set $\nu=\mu^{1\over 2}\lambda^{-{1\over 2}}$.

\subsection{Proof of the sufficient condition}\label{ssec7.1}

In this subsection, we assume that $ b\in \mathcal W_{\nu}(\mathbb R^n) $, 
then prove that 
$$
    [b, R_{j}]\in {S^{n,\infty} \big(L_{\mu}^2(\mathbb R^n), L_{\lambda}^2(\mathbb R^n)\big)}.
$$ 
By noting that 
$$
    \|[b,R_j]\|_{S^{n,\infty}\big(L_{\mu}^2(\mathbb R^n), L_{\lambda}^2 (\mathbb R^n)\big)} 
    =\big \|\lambda^{\frac{1}{2}}[b, R_{j}] \mu^{-\frac{1}{2}}
    \big \|_{ S^{n,\infty}\left(L^{2}(\mathbb{R}^n),L^{2}(\mathbb{R}^n)\right)},
$$
it suffices to show that 
\begin{equation}\label{eqn:suff-1.6}
	\big \|\lambda^{\frac{1}{2}}[b, R_{j}] \mu^{-\frac{1}{2}}
\big \|_{ S^{n,\infty}\left(L^{2}(\mathbb{R}^n),L^{2}(\mathbb{R}^n)\right)}
\lesssim  \|b\|_{\mathcal W_{\nu}(\mathbb R^n)}.
\end{equation}

Let $\Lambda=\big \{(x,y)\in \mathbb{R}^{n}\times \mathbb{R}^{n}:x=y\big\}$, 
and $\Omega=\left(\mathbb{R}^{n}\times \mathbb{R}^{n}\right)\backslash \Lambda $. 
Assuming that $\mathscr{P}$ is a dyadic Whitney decomposition family of the open set $\Omega$, 
that is, $\bigcup_{P\in\mathscr{P}}P=\Omega$ and 
$\sqrt{2n}\,\ell(P)\leq {\rm distance}\, (P,\Lambda)\leq 4\sqrt{2n}\,\ell(P)$; 
see \cite[Appendix J.1]{Gra1} for instance.
Therefore, we write $K_{j}(x-y)=\Sigma_{P\in\mathscr{P}}K_{j}(x-y)\mathsf{1}_P(x,y)$, 
and $P$ can be the cubes $P_{1}\times P_{2}$, where $P_{1},P_{2}\in \mathcal{D}$, 
have the same side length and that distance between them must be comparable to this side-length, due to 
$$
    {\rm distance\,}(P,\Lambda)\approx {\rm distance\,}(c_P,\Lambda)
    =\min_{x\in \mathbb R^n} \sqrt{|x-c_{P_1}|^2+|x-c_{P_2}|^2} =\frac{|c_{P_1}-c_{P_2}|}{\sqrt{2}},
$$
where $c_P=(c_{P_1},c_{P_2})$ is the center of $P$. 
More precisely, we have  $3\sqrt{n}\,\ell(P_1)\leq |c_{P_1}-c_{P_2}|\leq 9\sqrt{n}\,\ell(P_1)$.

Thus, for each dyadic cube $P_1\in \mathcal{D}$, $P_{2}$ is related to $P_1$ 
and at most $M$ of the cubes $P_{2}$ such that $P_{1}\times P_{2}\in \mathscr{P}$, 
where $M\leq (20\sqrt{n})^n$ depends only on the dimension $n$.
Therefore, let $Q=P_{1}$, there is $R_{Q, s}$ such that $Q\times R_{Q, s}\in\mathscr{P}$, 
where $s=1,2,\ldots,M$, we can reorganize the sum
\begin{align*}
    K_{j}(x-y)=\sum_{P\in\mathscr{P}}K_{j}(x-y)\mathsf{1}_P(x,y)
    =\sum_{Q\in\mathcal{D}}\sum_{s=1}^{M}K_{j}(x-y)\mathsf{1}_{ Q\times R_{Q, s} }(x,y),
\end{align*}
where $|Q|=|R_{Q, s}|$ and $\mathrm{distance}(Q,R_{Q, s})\approx \ell(Q)$. 

Assume $\epsilon>0$ is sufficiently small such that 
$\left((1+\epsilon)Q\times (1+\epsilon) R_{Q,s}\right) \cap \Lambda=\emptyset$.
Take a smooth function $\eta$ on $\mathbb R^n$ with 
${\rm supp\,} \eta\subset [-(1+\epsilon)/2,(1+\epsilon)/2]^n$, satisfying $\eta\equiv 1$ on $[-1/2,1/2]^n$ 
and $|\partial^\alpha \eta|\leq C_\alpha $ for every multi-index $\alpha\in \mathbb Z_+^n$. 
Now define $\eta_Q(x)=\eta\left ((x-c_Q)/\ell(Q)\right )$, where $c_Q$ is the center of $Q$. 
It's clear that ${\rm supp\,} \eta_Q\subset (1+\epsilon)Q$ 
and $|\partial^\alpha \eta_Q|\leq C_\alpha \ell(Q)^{-|\alpha|}$.

For each $(x,y)\in Q\times R_{Q,s}$,
$$
    K_j(x-y)\mathsf{1}_{Q\times R_{Q, s}}(x,y)=K_j(x-y) \eta_Q(x)\eta_{R_{Q,s}}(y).
$$
Expanding in a multiple Fourier series on $(1+\epsilon)Q\times (1+\epsilon) R_{Q,s}$, we can write
$$
    K_j(x-y) \eta_Q(x)\eta_{R_{Q,s}}(y)
    =\sum_{\vec{l}\in\mathbb{Z}^{2n}}{c^{j}_{\vec{l},Q}}e^{2\pi i\vec{l}'\cdot 
    \widetilde{x}}e^{2\pi i\vec{l}''\cdot \widetilde{y}} \,\mathsf{1}
_{(1+\epsilon)Q}(x)\cdot \mathsf{1}_{(1+\epsilon)Q_{R,s}}(y),
$$
where $x=c_{Q}+(1+\epsilon)\ell(Q)\tilde{x}$, $y
=c_{R_{Q,s}}+(1+\epsilon)\ell(R_{Q,s})\tilde{y}$,  
$\vec{l}=(\vec{l}', \vec{l}'')\in \mathbb Z^n\times \mathbb Z^n$, 
and
$$
    {c^{j}_{\vec{l},Q}}
    =\int_{(1+\epsilon)R_{Q,s}}\int_{(1+\epsilon)Q}K_{j}(x-y)  \eta_{Q}(x)\eta_{R(Q,s)}(y) \, 
    e^{-2\pi i\vec{l}'\cdot \widetilde{x} }e^{-2\pi i\vec{l}'' \cdot \widetilde{y}}dxdy
    \frac{1}{|(1+\epsilon)Q|}\frac{1}{(1+\epsilon)|R_{Q,s}|}.
$$
For any  $\alpha,~\beta\in \mathbb{Z}_+^{n}$, 
note that $K_{j}(x-y)$ has the regularity condition 
$$
     \left|\partial^{\alpha}_{x}\partial^{\beta}_{y}K_{j}(x-y)\right|
     \leq C(\alpha,\beta)\frac{1}{|x-y|^{n+|\alpha|+|\beta|}},
$$
which, in combination with the fact that ${\rm distance}(Q,R_{Q,s})\approx \ell(Q)$, yields 
$$
     \left|\partial^{\alpha}_{x}\partial^{\beta}_{y}K_{j}(x-y)\eta_Q(x)\eta_{Q_{R,s}}(y)\right|
     \leq C(\alpha,\beta)\frac{1}{|x-y|^{n+|\alpha|+|\beta|}}.
$$
Hence, one may apply 
$\hat{f}(\vec{l})=\frac{1}{(2\pi i\vec{l})^{(\alpha,\beta)}}
\widehat{\big(\partial^{(\alpha,\beta)}f\big)}(\vec{l})$ to see
\begin{align*}
    \big  |{c^{j}_{\vec{l},Q}}\big|
    &\lesssim\frac{1}{|Q|}\frac{1}{(1+|\vec{l}|)^{|\alpha|+|\beta|}}.
\end{align*}
Let $\Upsilon^j_{\vec{l},Q}:=|Q|^{\frac{1}{2}}|R_{Q,s}|^{\frac{1}{2}}{c^{j}_{\vec{l},Q}}$, then 
$$
    \big|\Upsilon^j_{\vec{l},Q}\big|\lesssim \frac{1}{(1+|\vec{l}|)^{|\alpha|+|\beta|}},
$$
and so
\begin{eqnarray*}
    K_j(x-y) \mathsf{1} _{Q\times R_{Q, s}}(x,y)
    =\sum_{\vec{l}\in\mathbb{Z}^{2n}}\Upsilon^j_{\vec{l},Q}\frac{1}{|Q|^{1/2}}F_{\vec{l}',Q}(x)
    \frac{1}{|R_{Q,s}|^{1\over 2}}G_{\vec{l}'',R_{Q,s}}(y),
\end{eqnarray*}
where $F_{\vec{l}',Q}(x)=e^{2\pi i\vec{l}'\cdot \widetilde{x}}\, \mathsf{1}_{Q}(x)$ 
and $G_{\vec{l}'',R_{Q,s}}(y)=e^{2\pi i\vec{l}''\cdot \widetilde{y}}\, \mathsf{1}_{R_{Q,s}}(y)$. 
Then, we get
\begin{align*}
    K_j(x-y) &=\sum_{Q\in\mathcal{D}}\sum_{s=1}^{M}\sum_{\vec{l}\in\mathbb{Z}^{2n}}
    \Upsilon^j_{\vec{l},Q}\frac{1}{|Q|^{1\over 2}}F_{\vec{l}',Q}(x)\frac{1}{|R_{Q,s}|^{1\over 2}}G_{\vec{l}'',R_{Q,s}}(y).
\end{align*}

Thus, the kernel of $\lambda^{\frac{1}{2}}[b, R_{j}] \mu^{-\frac{1}{2}}$ can be represented as
\begin{align*}
    K^{\mu,\lambda}_{b}(x,y)
    =\sum_{Q\in\mathcal{D}}\sum_{s=1}^{M}\sum_{\vec{l}\in\mathbb{Z}^{2n}}
    (b(x)-b(y))\Upsilon^j_{\vec{l},Q} \frac{1}{|Q|^{1\over 2}}
    \lambda^{\frac{1}{2}}(x)F_{\vec{l}',Q}(x)\frac{1}{|R_{Q,s}|^{1\over 2}}
    G_{\vec{l}'',R_{Q,s}}(y)\mu^{-\frac{1}{2}}(y).
\end{align*}

Note that for each $Q$,  $20\sqrt{n}Q$ contains $Q\bigcup R_{Q,s}$.
We rewrite $b(x)-b(y)$ as 
$\left(b(x)-\langle b\rangle_{20\sqrt{n}Q}\right)+\left(\langle b\rangle_{20\sqrt{n}Q}-b(y)\right)$. 
Then
\begin{align*} 
    K^{\mu,\lambda}_{b}(x,y)
    &=\sum_{Q\in\mathcal{D}}\sum_{s=1}^{M}\sum_{\vec{l}\in\mathbb{Z}^{2n}} 
    {\Upsilon^j_{\vec{l},Q}} \left(b(x)-\langle b\rangle_{20\sqrt{n}Q}\right) \frac{1}{|Q|^{1\over 2}}
    \lambda^{\frac{1}{2}}(x)F_{\vec{l}',Q}(x)\ \ \ \frac{1}{|R_{Q,s}|^{1\over 2}}
    G_{\vec{l}'',R_{Q,s}}(y)\mu^{-\frac{1}{2}}(y)\\
    &\qquad+\sum_{Q\in\mathcal{D}}\sum_{s=1}^{M}\sum_{\vec{l}\in\mathbb{Z}^{2n}} 
     {\Upsilon^j_{\vec{l},Q}}  \frac{1}{|Q|^{1\over 2}}\lambda^{\frac{1}{2}}(x)F_{\vec{l}',Q}(x)
     \left(\langle b \rangle_{20\sqrt{n}Q}-b(y)\right) \frac{1}{|R_{Q,s}|^{1\over 2}}
     G_{\vec{l}'',R_{Q,s}}(y)\mu^{-\frac{1}{2}}(y)\\
     &=:K^{\mu,\lambda}_{b,1}(x,y)+K^{\mu,\lambda}_{b,2}(x,y).
\end{align*}

To prove \eqref{eqn:suff-1.6}, we first verify the following $S^{n,\infty}-$norm of the operator $T^{\mu,\lambda}_{b,2}$ whose kernel  is $K^{\mu,\lambda}_{b,2}(x,y)$:
\begin{equation}\label{eqn:S-n-infty-2}
	  \|T^{\mu,\lambda}_{b,2}\|_{ S^{n,\infty}\left(L^{2}(\mathbb{R}^n),L^{2}(\mathbb{R}^n)\right)}
    \lesssim\|\{\operatorname{osc}_{2}(b,Q)\}_Q\|_{\ell^{n,\infty}},
\end{equation}
where
\begin{align*}
    \operatorname{osc}_{2}(b,Q)
    :=\bigg[{1\over \lambda^{-1}(20\sqrt{n}Q)} \int_{20\sqrt{n}Q} 
    \left|b(y)- \langle b\rangle_{20\sqrt{n}Q}\right|^{2} \mu^{-1}(y) dy\bigg]^{1\over 2}.
\end{align*}

To see this, we write 
\begin{align*} 
    K^{\mu,\lambda}_{b,2}(x,y)
    &=\sum_{Q\in\mathcal{D}}\sum_{s=1}^{M}\sum_{\vec{l}\in\mathbb{Z}^{2n}}  {\Upsilon^j_{\vec{l},Q}} 
    \operatorname{osc}_{2}(b,Q) \,
    \lambda^{\frac{1}{2}}(x)F_{\vec{l}',Q}(x) {1\over \lambda(Q)^{1\over 2}}\\ 
    &\qquad \times (\operatorname{osc}_{2}(b,Q))^{-1}
    \left( \langle b\rangle_{20\sqrt{n}Q}-b(y)\right) \frac{1}{|R_{Q,s}|^{1/2}}
    G_{\vec{l}'',R_{Q,s}}(y)\mu^{-\frac{1}{2}}(y) 
    { \lambda(Q)^{1\over 2}\over |Q|^{{1\over2} }}.
\end{align*}
Then we set
$$
    F_{\vec{l}',Q,2}(x)= 
    \lambda^{\frac{1}{2}}(x)F_{\vec{l}',Q}(x) {1\over \lambda(Q)^{1\over 2}}
$$
and
$$
    G_{\vec{l}'',R_{Q,s},2}(y)=  (\operatorname{osc}_{2}(b,Q))^{-1}
    \left( \langle b\rangle_{20\sqrt{n}Q}-b(y)\right) \frac{1}{|R_{Q,s}|^{1\over 2}}
    G_{\vec{l}'',R_{Q,s}}(y)\mu^{-\frac{1}{2}}(y) 
    { \lambda(Q)^{1\over 2}\over |Q|^{ 1\over 2} }.
$$

We first consider $F_{\vec{l}',Q,2}$. 
As  $ \lambda $ is $A_2$,  it follows from Lemma \ref{reverse} that there is a   $r = r _{[ \lambda ]_ {A_2}}>2$, so 
that for each dyadic cube $Q$, 
\begin{equation*}
\Bigg(  {1\over |Q|}\int _Q  \lambda(x) ^{r\over2}  \; dx \Bigg) ^{2\over r} \lesssim \frac{ \lambda (Q)}{ \lvert  Q \rvert}. 
\end{equation*}
With this choice of $r>2$,  
\begin{align*}
\|F_{\vec{l}',Q,2}\|_{L^r(\R^n)}&\leq  {1\over \lambda(Q)^{1\over2}}\bigg( \int_{Q} {\lambda^{r\over 2}(x)  } dx\bigg)^{1\over r}
= {|Q|^{1\over r}\over \lambda(Q)^{1\over2}}\bigg( {1\over |Q|} \int_{Q} {\lambda^{r\over 2}(x)  } dx\bigg)^{1\over r}\\
&\lesssim{|Q|^{1\over r}\over \lambda(Q)^{1\over2}}  \frac{ \lambda (Q)^{1\over2}}{ \lvert  Q \rvert^{1\over2}}\\
&= |Q|^{{1\over r}-{1\over 2}}.
\end{align*}
Hence, $\left\{F_{\vec{l}',Q,2}\right\}$ is a NWO sequence.

Next, we consider $G_{\vec{l}'',R_{Q,s},2}$ for $1\leq s\leq M$. 
\begin{align}
    \|G_{\vec{l}'',R_{Q,s},2}\|_{L^{2}(\mathbb R^n)} 
    &\leq (\operatorname{osc}_{2}(b,Q))^{-1}\frac{1}{|R_{Q,s}|^{1\over 2}}{ \lambda(Q)^{1\over 2}
    \over |Q|^{  1\over 2 }} 
    \bigg(\int_{20\sqrt{n}Q} |b(y)-\langle b\rangle_{20\sqrt{n}Q}|^{2} 
    \mu^{-1}(y)dy\bigg)^{1\over 2} \nonumber\\
    &\leq \frac{1}{|R_{Q,s}|^{1\over 2}}{ \lambda(Q)^{1\over 2} \lambda^{-1}
    (20\sqrt{n}Q)^{1\over 2}\over |Q|^{1\over 2} } \nonumber\\
    &\leq C_\lambda,\label{eqn:Clambda}
\end{align}
where $C_{\lambda}>0$ is a constant depending on the $[\lambda]_{A_2}$.

Thus, for every $R_{Q,s}$ with $|x-c_{R_{Q,s}}|\leq\ell(R_{Q,s})$, we have
\begin{align}\label{eqn:GR}
    {1\over |R_{Q,s}|^{1\over 2}}\big|\big\langle f, G_{\vec{l}'',R_{Q,s},2}\big\rangle\big|
    &\leq  {1\over |R_{Q,s}|^{1\over 2}} \|G_{\vec{l}'',R_{Q,s},2}\|_{L^{2}(\mathbb R^n)} 
    \bigg(\int_{R_{Q,s}} |f(y)|^2 dy\bigg)^{1\over 2} \nonumber\\
    &\leq 
    C_{\lambda}\,\bigg({1\over |R_{Q,s}|}\int_{R_{Q,s}} |f(y)|^2  dy\bigg)^{1\over 2}.
\end{align}

Therefore, for
\begin{equation}\label{eqn:K2}
	    T^{\mu,\lambda}_{b,2}(f)(x)=\sum_{\vec{l} \in \mathbb{Z}^{2n}} \sum_{s=1}^{M}
    \sum_{Q \in \mathcal{D}} {\Upsilon^j_{\vec{l},Q}}  \operatorname{osc}_{2}(b,Q)
    \left\langle f, G_{\vec{l}'',R_{Q,s},2}\right\rangle F_{\vec{l}',Q,2},
\end{equation}
we now consider the expansion of $\langle T^{\mu,\lambda}_{b,2}(f),g\rangle$ with $f,\, g\in L^2(\mathbb R^n)$.

To continue the proof of \eqref{eqn:S-n-infty-2},   we claim a more general conclusion that 
\begin{itemize}
	\item[\textbf{(C)}] the operator $T^{\mu,\lambda}_{b,2}$ is  in $S^{p,q}$ if the sequence $\big\{ \operatorname{osc}_{2}(b,Q) \big\}_Q$ is in $\ell^{p,q}$ for $0<p<\infty$ and $0<q\leq \infty$, and 
	$$
	        \left\|T^{\mu,\lambda}_{b,2}\right \|_{S^{p,q}(L^2(\mathbb R^n), L^2(\mathbb R^n))}\lesssim \big  \|  \big \{ \operatorname{osc}_{2}(b,Q)\big  \}_Q   \,\big  \|_{\ell^{p,q}}.
	$$
\end{itemize}
That is, the classical result \eqref{eq-NWO1} still holds for $T^{\mu,\lambda}_{b,2}$, whose representation \eqref{eqn:K2} does not exactly match the original requirements (two NOW seqeunces) as in \eqref{eq-NWO1}.

Similar to the proof of \cite[Corollary 2.8]{RS1989}, this is verified by combining the following two facts:
\begin{itemize}
	\item [\textbf{(F1)}] the operator $T^{\mu,\lambda}_{b,2}$  is  in $S^p$ if the sequence $\big \{\operatorname{osc}_{2}(b,Q)\big  \}_Q$ is in $\ell^p$ for $0<p<\infty$, and 
	 $$
	 \left\|T^{\mu,\lambda}_{b,2}\right \|_{S^p(L^2(\mathbb R^n), L^2(\mathbb R^n)}\lesssim \big  \|  \big\{\operatorname{osc}_{2}(b,Q)\big  \}_Q   \,\big  \|_{\ell^p}.
	$$

	\item[\textbf{(F2)}] the operator $\mathsf M'$  defined on the sequence $\big \{\operatorname{osc}_{2}(b,Q)\big  \}_Q$ by 
	\begin{equation}\label{eqn:Max-function-Car}
		 \mathsf M'\Big(\big \{ \operatorname{osc}_{2}(b,Q)\big  \}_Q\Big)(P) :=   {1\over |P|} 
    \sum_{R\in \mathcal D: \, R\subset P }   \operatorname{osc}_{2}(b,P)\,|R| 
	\end{equation}
is  a bounded map of $\ell^{p,q}$ to itself  for any $0<p<\infty$ and $0<q\leq \infty$.
\end{itemize}
  
\smallskip

Obviously, \textbf{(F2)} follows readily by the argument for Lemma \ref{lem aux Sobolev new}. To verify \textbf{(F1)}, we quote related definitions in \cite{RS1989} and begin by  showing that the operator $T^{\mu,\lambda}_{b,2}$ is  $L^2(\mathbb R^n)$ bounded, and  
\begin{equation}\label{eqn:aim}
     \left\|T^{\mu,\lambda}_{b,2}\right \|_{L^2(\mathbb R^n)\to L^2(\mathbb R^n)}\lesssim \big  \|  \big \{ \operatorname{osc}_{2}(b,Q)\big  \}_Q   \,\big  \|_{CMd},
\end{equation}
where the discrete Carleson measure of $\big \{ \operatorname{osc}_{2}(b,Q)\big  \}_Q$ is given by 
\begin{equation}\label{eqn:CMd}
    \big  \|  \big \{ \operatorname{osc}_{2}(b,Q)\big  \}_Q   \,\big  \|_{CMd}:=\sup_{P\in \mathcal D} \mathsf M'\Big(\big \{ \operatorname{osc}_{2}(b,Q)\big  \}_Q\Big)(P).
\end{equation}
It suffices to show that for any $f,g\in C_c^\infty(\mathbb R^n)$, 
$$
     \left|\left\langle T^{\mu,\lambda}_{b,2}(f),g\right \rangle\right| \lesssim  \big  \|  \big \{ \operatorname{osc}_{2}(b,Q)\big  \}_Q   \,\big  \|_{CMd}\, \|f\|_{L^2(\mathbb R^n)}\, \|g\|_{L^2(\mathbb R^n)}.
$$

Note that 
\begin{align*}
    &\left|\left\langle T^{\mu,\lambda}_{b,2}(f),g\right \rangle\right|\nonumber\\
    &=\bigg|\sum_{\vec{l} \in \mathbb{Z}^{2n}} \sum_{s=1}^{M}
    \sum_{Q \in \mathcal{D}} {\Upsilon^j_{\vec{l},Q}}  \operatorname{osc}_{2}(b,Q)
    \left\langle f, G_{\vec{l}'',R_{Q,s},2}\right\rangle \left\langle g, F_{\vec{l}',Q,2}\right\rangle\bigg| \nonumber\\
    &\leq \bigg(\sum_{\vec{l} \in \mathbb{Z}^{2n}} \sum_{s=1}^{M}
    \sum_{Q \in \mathcal{D}} {\Upsilon^j_{\vec{l},Q}}   \operatorname{osc}_{2}(b,Q)
    \left|  \left\langle f, G_{\vec{l}'',R_{Q,s},2}\right\rangle\right|^2   \bigg)^{1/2}\cdot \bigg(\sum_{\vec{l} \in \mathbb{Z}^{2n}} \sum_{s=1}^{M}
    \sum_{Q \in \mathcal{D}} {\Upsilon^j_{\vec{l},Q}}   \operatorname{osc}_{2}(b,Q)
      \left| \left\langle g, F_{\vec{l}',Q,2}\right\rangle  \right|^2 \bigg)^{1/2},
\end{align*}
and  the coefficients  $\big\{ \Upsilon^j_{\vec{l},Q} \big\}$ satisfy 
$\big| \Upsilon^j_{\vec{l},Q}\big| \lesssim \frac{1}{(1+|\vec{l}|)^{|\alpha|+|\beta|}}$ 
for all multi-indices $\alpha, ~\beta\in\mathbb{Z}_{+}^{n}$. To continue, it suffices to show that for each $1\leq s\leq M$, we have 
\begin{equation}\label{eqn:T-aux-1}
	 \sum_{Q \in \mathcal{D}}    \operatorname{osc}_{2}(b,Q)
    \left|  \left\langle f, G_{\vec{l}'',R_{Q,s},2}\right\rangle\right|^2 \lesssim  \big  \|  \big \{ \operatorname{osc}_{2}(b,Q)\big  \}_Q   \,\big  \|_{CMd} \,\|f\|_{L^2(\mathbb R^n)}^2
\end{equation}
and 
\begin{equation}\label{eqn:T-aux-2}
	 \sum_{Q \in \mathcal{D}}    \operatorname{osc}_{2}(b,Q)
    \left| \left\langle g, F_{\vec{l}',Q,2}\right\rangle  \right|^2   \lesssim  \big  \|  \big \{ \operatorname{osc}_{2}(b,Q)\big  \}_Q   \,\big  \|_{CMd} \,\|g\|_{L^2(\mathbb R^n)}^2.
\end{equation}
Since $\left\{F_{\vec{l}',Q,2}\right\}$ is a NWO sequence, we see that from Definition \ref{def:NWO},
$$
g^*(x):= \sup_{Q: |x-c_Q|\leq\ell(Q)}|Q|^{-{1\over2}} \Big| \left\langle g, F_{\vec{l}',Q,2}\right\rangle\Big| 
$$
is bounded on $L^2(\mathbb R^n)$. This allows us to use the argument in \cite[(7.39)]{RS1989} to obtain
\begin{align*}
	\text{LHS\  of \   }\eqref{eqn:T-aux-2} &\leq   \big  \|  \big \{ \operatorname{osc}_{2}(b,Q)\big  \}_Q   \,\big  \|_{CMd}
\int_{\mathbb R^n}  (g^*(x))^2 dx\lesssim \text{RHS\  of \   }\eqref{eqn:T-aux-2} ,
\end{align*}
as desired.

It remains to verify \eqref{eqn:T-aux-1}, whose argument is inspired by \cite[(7.39)]{RS1989}, but with slightly different machinery since $\left\{G_{\vec{l}'',R_{Q,s},2}  \right\}$ is not a NWO sequence. Define for each given $1\leq s\leq M$,
$$
   \mathcal E_\kappa =\left\{R_{Q,s}\in \mathcal D:\,  \frac{|  \langle f, G_{\vec{l}'',R_{Q,s},2}\rangle|^2}{|Q|}>\kappa \right\}  \quad \text{for}\quad \kappa>0,
$$ 
and let
$$
     \mathcal E_{\kappa}^{\max}=\left\{\text{maximal \   dyadic\    cubes \   in\   }\mathcal E_\kappa \right\},
$$
obviously, any two cubes in $\mathcal E_{\kappa }^{\max}$ are mutually disjoint and $\displaystyle  \bigcup_{P\in \mathcal E_{\kappa }^{\max}}  P    = \bigcup_{P\in \mathcal E_\kappa }P $.

Without loss of generality, according to the construction of  $R_{Q,s}$ associated to $Q$ as discussed at the beginning of the proof, one may assume $R_{Q,s}=Q$ (indeed, there exists a positive integer $\mathsf K$ such that for any $P\in \mathcal D$, $\#\left\{Q\in \mathcal D:\, R_{Q,s}=P\right \}\leq \mathsf K$).  
Therefore, for $f\in C_c^\infty(\mathbb R^n)$,  
\begin{align*}
	\text{LHS\  of \   }\eqref{eqn:T-aux-1} &=\sum_{Q \in \mathcal{D}}    \operatorname{osc}_{2}(b,Q)|Q|\ 
  {  \left|  \left\langle f, G_{\vec{l}'',R_{Q,s},2}\right\rangle\right|^2\over |Q|}\\
	 &=  \sum_{Q\in \mathcal D}    \operatorname{osc}_{2}(b,Q) |Q| \int_0^{\infty} 1_{\bigg\{\frac{ |    \langle f, G_{\vec{l}'',R_{Q,s},2}   \rangle  |^2}{|Q|}>\kappa\bigg\}}(Q) \,d\kappa  \\
	&=\int_0^\infty \sum_{Q\in \mathcal E_{\kappa} }  \operatorname{osc}_{2}(b,Q) |Q|  \,d\kappa \\
	&=\int_0^{\infty }  \sum_{P\in  \mathcal E_{\kappa}^{\max}  }  \bigg ( \frac{1}{|P|}\sum_{Q\in \mathcal E_{\kappa} :\, Q\subset P}        \operatorname{osc}_{2}(b,Q) |Q|\bigg )   \, |P| \,d\kappa\\
		&\leq    \big  \|  \big \{ \operatorname{osc}_{2}(b,Q)\big  \}_Q   \,\big  \|_{CMd}   \int_0^{\infty }  \Big | \bigcup_{P\in  \mathcal E_{\kappa }^{\max} } P\Big |   \,d\kappa,
\end{align*}
where the inequality follows from the definition of $CMd$ norm \eqref{eqn:CMd}.

Since $f\in C_c^\infty(\mathbb R^n)$, one may combine \eqref{eqn:GR} and the mean value theorem of integrals to see for each $R_{Q,s}$, there exists $x_Q\in R_{Q,s}$ such that $\forall\, x\in R_{Q,s}$,
\begin{align*}
      \frac{|  \langle f, G_{\vec{l}'',R_{Q,s},2}\rangle|^2}{|Q|}
    &\leq      (C_{\lambda})^2    {1\over |R_{Q,s}|}\int_{R_{Q,s}} |f(y)|^2  dy= (C_\lambda) ^2 \left  (f (x_Q)\right )^2 \\
    &\leq (C_\lambda) ^2  \left ({\mathcal M_{\mathcal D}(f)}\right )^2 (x_Q) =(C_\lambda) ^2     \left  (\mathcal M_{\mathcal D}(f)\right )^2 (x), 
\end{align*}
where $C_\lambda$ is as in \eqref{eqn:Clambda}, and $\mathcal M_{\mathcal D}$ is the uncentered dyadic maximal function given by $$\mathcal M_{\mathcal D}(h)(x):=\sup_{Q\in \mathcal D:\, x\in Q} \frac{1}{|Q|}\int_Q |h(y)|dy$$  which is  bounded on $L^2(\mathbb R^n)$.  Let
$$
      \Omega_\kappa =\left\{ x\in \mathbb R^n:\,  \left  (\mathcal M_{\mathcal D}(f)\right )^2 (x) >(C_\lambda) ^{-2} \kappa \right\}  \quad \text{for}\quad \kappa>0,
$$
then
$$
      {\bigcup_{P\in \mathcal E_{\kappa}^{\max}}  P }  \    \subset 
      \Omega_\kappa.
$$

Therefore, for $f\in C_c^\infty(\mathbb R^n)$,  
\begin{align*}
	\text{LHS\  of \   }\eqref{eqn:T-aux-1}  
	&\leq    \big  \|  \big \{ \operatorname{osc}_{2}(b,Q)\big  \}_Q   \,\big  \|_{CMd}   \int_0^{\infty }  \Big | \Omega_\kappa \Big |   \, d\kappa \\
	&\lesssim    \big  \|  \big \{ \operatorname{osc}_{2}(b,Q)\big  \}_Q   \,\big  \|_{CMd}   \int_{\mathbb R^n}    \left  (\mathcal M_{\mathcal D}(f)\right )^2 (y)\, dy\\
	&\lesssim    \big  \|  \big \{ \operatorname{osc}_{2}(b,Q)\big  \}_Q   \,\big  \|_{CMd}   \int_{\mathbb R^n}    |f (y)|^2\, dy,
\end{align*}
as desired. Thus we obtain \eqref{eqn:aim} by  combining \eqref{eqn:T-aux-1} and \eqref{eqn:T-aux-2}.

To continue the proof of \textbf{(F1)}, we recall again that the coefficients  $\big\{ \Upsilon^j_{\vec{l},Q} \big\}$ satisfy 
$\big| \Upsilon^j_{\vec{l},Q}\big| \lesssim \frac{1}{(1+|\vec{l}|)^{|\alpha|+|\beta|}}$ 
for all multi-indices $\alpha, ~\beta\in\mathbb{Z}_{+}^{n}$ for $1\leq s\leq M$. Thus, it suffice to show there exists a constant $C>0$ such that for each $\vec{l} \in \mathbb{Z}^{2n}$ and $1\leq s\leq M$, we have 
\begin{equation}\label{eqn:Sp-2}
	 \left\|T^{\mu,\lambda}_{b,2,{\vec{l},s}}\right \|_{S^p(L^2(\mathbb R^n), L^2(\mathbb R^n))}\leq C \big  \|  \big \{ \operatorname{osc}_{2}(b,Q)\big  \}_Q   \,\big  \|_{\ell^p} \quad \text{for}\quad   0<p<\infty,
\end{equation}
 where 
$$
   T^{\mu,\lambda}_{b,2,{\vec{l},s}}:=    \sum_{Q \in \mathcal{D}}    \operatorname{osc}_{2}(b,Q)
    \left\langle \cdot, G_{\vec{l}'',R_{Q,s},2}\right\rangle F_{\vec{l}',Q,2}.
$$

Recall that by Rayleigh's quotient, the $j-$th singular values $s_j(T^{\mu,\lambda}_{b,2,\vec{l},s})$  can  be characterized by 
\begin{equation}\label{SV-represent}
		 s_j\left (T^{\mu,\lambda}_{b,2,\vec{l},s}\right )=\inf\left\{  \left \|T^{\mu,\lambda}_{b,2,\vec{l},s}-F\right \|_{ L^2\to L^2}:\, F \text{ is\  a\   linear \  operator\ on\   } L^2(\mathbb R^n)\    \text{and}\   \text{rank} (F)\leq j\right\}.
\end{equation}
For the maximal operator $M'$ defined in \eqref{eqn:Max-function-Car}, 
let $M'(\big \{ \operatorname{osc}_{2}(b,Q)\big  \}_Q)^{*}(j)$ denote the $j-$th element of the non-increasing rearrangement of the sequence $\left\{\mathsf M'(\big \{ \operatorname{osc}_{2}(b,Q)\big  \}_{Q\in \mathcal D})(P)\right\}_{P\in \mathcal D}$, and denote  (at least one) the corresponding cubes in $\mathcal D$ by $P_j$, that is, $M'(\big \{ \operatorname{osc}_{2}(b,Q)\big  \}_Q)^*(j)=M'(\big \{ \operatorname{osc}_{2}(b,Q)\big  \}_Q)(P_j)$. 
For any $k>1$, choose
$$
    F=F_k:= 
    \sum_{j<k}    \operatorname{osc}_{2}(b,P_j)
    \left\langle \cdot, G_{\vec{l}'',R_{P_j,s},2}\right\rangle F_{\vec{l}',P_j,2}.
$$
By \eqref{SV-represent} and \eqref{eqn:aim}, one may verify 
that applying a similar  argument as in \cite[p. 241]{RS1989}, we have 
$$
   s_k(T^{\mu,\lambda}_{b,2,\vec{l},s})\leq C M'(\big \{ \operatorname{osc}_{2}(b,Q)\big  \}_Q)^*(k)
$$ 
and 
$$
   \sum_{P\in \mathcal D} \left[M'(\big \{ \operatorname{osc}_{2}(b,Q)\big  \}_Q)(P)\right]^p \leq  c_p \sum_{P\in \mathcal D}\left|\operatorname{osc}_{2}(b,P)\right|^p,  
$$
thus \eqref{eqn:Sp-2} follows, and we finish the proof of \textbf{(F1)}.   Consequently, the conclusion \textbf{(C)} holds. In particular, as a consequence, we see that 
 \eqref{eqn:S-n-infty-2} holds.

W now consider $K^{\mu,\lambda}_{b,1}(x,y)$  and denote the corresponding operator by $T^{\mu,\lambda}_{b,1}$.  Let
\begin{align*}
    \operatorname{osc}_{1}(b,Q)
    =\bigg[{1\over \mu(20\sqrt{n}Q)} \int_{20\sqrt{n}Q} \left|b(x)
    - \langle b\rangle_{20\sqrt{n}Q} \right|^{2}\lambda(x) dx\bigg]^{1\over 2}.
\end{align*}
Then
\begin{align*} 
    K^{\mu,\lambda}_{b,1}(x,y)
    &=\sum_{Q\in\mathcal{D}}\sum_{s=1}^{M}\sum_{\vec{l}\in\mathbb{Z}^{2n}} 
    {\Upsilon^j_{\vec{l},Q}} \operatorname{osc}_{1}(b,Q)\,\operatorname{osc}_{1}(b,Q) ^{-1}   
    \left(b(x)-\langle b\rangle_{20\sqrt{n}Q} \right) \frac{1}{|Q|} 
    \lambda^{\frac{1}{2}}(x)F_{\vec{l}',Q}(x)\ {\mu^{-1}(Q)^{1\over 2}}  \\ 
    &\qquad \times  
    G_{\vec{l}'',R_{Q,s}}(y)\mu^{-\frac{1}{2}}(y) {1\over {\mu^{-1}(Q)^{  {1\over 2} }    }}.
\end{align*}
We set
$$
    F_{\vec{l}',Q,1}(x)=\left(\operatorname{osc}_{1}(b,Q)\right)^{-1} 
    \left(b(x)-\langle b\rangle_{20\sqrt{n}Q} \right) \frac{1}{|Q|}
    \lambda^{\frac{1}{2}}(x)F_{\vec{l}',Q}(x) {\mu^{-1}(Q)^{1\over 2}} 
$$
and
$$
    G_{\vec{l}'',R_{Q,s},1}(y)=   
    G_{\vec{l}'',R_{Q,s}}(y)\mu^{-\frac{1}{2}}(y) {1\over {\mu^{-1}(Q)^{1\over 2}}}.
$$

Note that  $\operatorname{osc}_{1}(b,Q)$ has the same construct of $\operatorname{osc}_{2}(b,Q)$.  Meanwhile, $F_{\vec{l}',Q,1}(x)$ (reps. $ G_{\vec{l}'',R_{Q,s},1}(y)$) plays the role of $G_{\vec{l}'',R_{Q,s},2}(y)$ (resp. $F_{\vec{l}',Q,2}(x)$).
Therefore,  {a similar argument  yields that  $\left\{G_{\vec{l}'',R_{Q,s},1}\right\}$  is a  NWO sequence. Meanwhile, applying the approach in the proof of  \textbf{(C)} above, we can also obtain that 
$$
    \left\|T^{\mu,\lambda}_{b,1}\right \|_{L^2(\mathbb R^n)\to L^2(\mathbb R^n)}\lesssim \big  \|  \big \{ \operatorname{osc}_{1}(b,Q)\big  \}_Q   \,\big  \|_{CMd}
$$
and that
$$
 \left\|T^{\mu,\lambda}_{b,1}\right \|_{S^p(L^2(\mathbb R^n), L^2(\mathbb R^n))}\lesssim \big  \|  \big \{ \operatorname{osc}_{1}(b,Q)\big  \}_Q   \,\big  \|_{\ell^p}\quad \text{for}\quad   0<p<\infty,
$$
analogous to \eqref{eqn:aim} and \textbf{(F1)}.

Note that the proof of Lemma \ref{lem aux Sobolev new}  implies that 
 \textbf{(F2)} with replacing $\operatorname{osc}_{2}(b,Q)$ therein by $\operatorname{osc}_{1}(b,Q)$ also holds. Hence the analog of \textbf{(C)} follows and in particular, we have
 \begin{equation}\label{eqn:S-n-infty-1}
	  \|T^{\mu,\lambda}_{b,1}\|_{ S^{n,\infty}\left(L^{2}(\mathbb{R}^n),L^{2}(\mathbb{R}^n)\right)}
    \lesssim\|\{\operatorname{osc}_{1}(b,Q)\}_Q\|_{\ell^{n,\infty}},
\end{equation}
as desired.

\smallskip

Hence, 
$$
    (\lambda^{\frac{1}{2}}[b, R_{j}] \mu^{-\frac{1}{2}}) (f)(x)
    = \sum_{\vec{l} \in \mathbb{Z}^{2n}} \sum_{m=1}^{2}\sum_{s=1}^{M}\sum_{Q \in \mathcal{D}} 
    \Upsilon^j_{\vec{l},Q} \operatorname{osc}_{m}(b,Q)
    \left\langle f, G_{\vec{l}'',R_{Q,s},m}\right\rangle F_{\vec{l}',Q,m},
$$
and coefficients $\big\{ \Upsilon^j_{\vec{l},Q} \big\}$ which satisfy 
$\big| \Upsilon^j_{\vec{l},Q}\big| \lesssim \frac{1}{(1+|\vec{l}|)^{|\alpha|+|\beta|}}$ 
for all multi-indices $\alpha, ~\beta\in\mathbb{Z}_{+}^{n}$. 
Thus, combining \eqref{eqn:S-n-infty-2}, \eqref{eqn:S-n-infty-1},     Definition~\ref{def:Weight-Sobolev} and Theorem \ref{thm:Weight-Sobolev-revised},  we have
\begin{align*}
    \|\lambda^{\frac{1}{2}}[b, R_{j}] \mu^{-\frac{1}{2}}\|_{ S^{n,\infty}\left(L^{2}(\mathbb{R}^n),L^{2}(\mathbb{R}^n)\right)}
    &\lesssim \|\{\operatorname{osc}_{1}(b,Q)\}_Q\|_{\ell^{n,\infty}}+\|\{\operatorname{osc}_{2}(b,Q)\}_Q\|_{\ell^{n,\infty}}\\
    &\lesssim \|b\|_{\mathcal W_{\nu}(\mathbb R^n)}.
\end{align*}

Then we see that $ \lambda^{\frac{1}{2}}[b, R_{j}] \mu^{-\frac{1}{2}} \in S^{n,\infty}
\big(L^{2}(\mathbb{R}^n), L^{2}(\mathbb{R}^n)\big)$.


\subsection{Proof of the necessary condition}

We now assume that 
$[b, R_{j}]\in S^{n, \infty}  \big(L_{\mu}^2(\mathbb R^n), L_{\lambda}^2(\mathbb R^n)\big)$, 
then prove that $b\in \mathcal W_{\nu}(\mathbb R^n) $.

First, for each $Q\in \mathcal D$,  similar to  the proof of  Proposition \ref{prop:Besov-2}, 
one may choose another dyadic cube $\widehat Q\in\mathcal D$, such that: 
(i) $|Q|=|\widehat Q|$ and ${\rm distance}(20\sqrt{n}Q, 20\sqrt{n}\widehat Q)\approx \ell(Q)$; 
(ii) the kernel of Riesz transform $K_j(x-\widehat x)$ does not change sign for all 
$(x,\widehat x)\in 20\sqrt{n}Q\times 20\sqrt{n}\widehat Q$, and $|K_j(x-\widehat x)|\gtrsim |Q|^{-1}$. 
Moreover,  we can assume that these three properties are also valid when enlarging  
$20\sqrt{n}$ to $(1+\epsilon')20\sqrt{n}$ for some $\epsilon'>0$ sufficiently small.
 
Define
\begin{equation}\label{eqn:JQ}
    J_{Q}(x,y)=|20\sqrt{n}Q|^{-2}K^{-1}_{j}(x-y)\mathsf{1}_{20\sqrt{n}Q}(x) \mathsf{1}_{20\sqrt{n}\widehat{Q}}(y).
\end{equation}
Similar to the proof of the sufficient condition in Section \ref{ssec7.1}, 
take a smooth function $\zeta$ on $\mathbb R^n$ with ${\rm supp\,} \zeta\subset [-(1+\epsilon')/2,(1+\epsilon')/2]^n$, 
satisfying $\eta\equiv 1$ on $[-1/2,1/2]^n$ and $|\partial^\alpha \eta|\leq C_\alpha$ for each $\alpha\in \mathbb Z_+^n$. 
Define $\zeta_Q(x)=\zeta\left ((x-c_Q)/(20\sqrt{n}\ell(Q))\right )$, we have
$$
    K^{-1}_{j}(x-y)\mathsf{1}_{20\sqrt{n}Q}(x) \mathsf{1}_{20\sqrt{n}\widehat{Q}}(y)
    =K^{-1}_{j}(x-y)\zeta_Q(x)\zeta_{\widehat Q}(y)
$$
for $(x,y)\in 20\sqrt{n}Q\times  20\sqrt{n}\widehat{Q}$.

Applying the multiple Fourier series on 
$\left ((1+\epsilon')20\sqrt{n}Q\right )\times  \left ((1+\epsilon')20\sqrt{n}\widehat{Q}\right )$, 
we can write
\begin{align*}
    K^{-1}_{j}(x-y)\zeta_Q(x)\zeta_{\widehat Q}(y)
    =\sum_{\vec{l}\in\mathbb{Z}^{2n}}\tilde{c}^{j}_{\vec{l},Q}e^{2\pi i\vec{l}'\cdot \widetilde{x}}
    e^{2\pi i\vec{l}''\cdot \widetilde{y}} \, \mathsf{1}_{(1+\epsilon')20\sqrt{n}Q}(x)
    \cdot \mathsf{1}_{(1+\epsilon')20\sqrt{n}\widehat{Q}}(y),
\end{align*}
where $x=c_{Q}+(1+\epsilon')20\sqrt{n}\ell(Q)\tilde{x}$, 
$y=c_{\widehat Q}+(1+\epsilon')20\sqrt{n}\ell(\widehat{Q})\tilde{y}$,   
$\vec{l}=(\vec{l}', \vec{l}'')\in \mathbb Z^n\times \mathbb Z^n$,
and
$$
    \tilde{c}^{j}_{\vec{l},Q}
    =\int_{(1+\epsilon')20\sqrt{n}\widehat{Q}}\int_{(1+\epsilon')20\sqrt{n}Q}K^{-1}_{j}(x-y) 
    \zeta_Q(x)\zeta_{\widehat Q}(y) e^{-2\pi i\vec{l}'\cdot \widetilde{x} }e^{-2\pi i\vec{l}''
    \cdot \widetilde{y}}dxdy\frac{1}{|(1+\epsilon')20\sqrt{n}Q|}\frac{1}{|(1+\epsilon')20\sqrt{n}\widehat{Q}|}.
$$

Combining $|Q|=|\widehat{Q}|$,  
$\mathrm{distance}((1+\epsilon')20\sqrt{n}Q,(1+\epsilon')20\sqrt{n}\widehat{Q})\approx \ell(Q)$ 
and the regularity estimate
$$
    \left|\partial^{\alpha}_{x}\partial^{\beta}_{y}K^{-1}_{j}(x-y)\zeta_Q(x)\zeta_{\widehat Q}(y)\right|
    \leq C(\alpha,\beta) \,{|x-y|^{n-|\alpha|-|\beta|}}
$$
for any $\alpha,~\beta\in \mathbb{Z}_+^{n}$, we have  
\begin{align*}
    \big|  \tilde{c}^{j}_{\vec{l},Q}\big|
    &\lesssim \frac{\ell(Q)^{|\alpha|}\ell(\widehat{Q})^{|\beta|}}{(1+|\vec{l}|)^{|\alpha|+|\beta|}}
    \int_{(1+\epsilon')20\sqrt{n}\widehat{Q}}\int_{(1+\epsilon')20\sqrt{n}Q}
    \left|\partial^{\alpha}_{x}\partial^{\beta}_{y}K^{-1}_{j}(x-y) \zeta_Q(x)
    \zeta_{\widehat Q}(y)\right|dxdy\frac{1}{|(1+\epsilon')20\sqrt{n}Q|^2} \\
    &\lesssim{|Q|}\frac{1}{(1+|\vec{l}|)^{|\alpha|+|\beta|}}.
\end{align*}

Therefore, we can denote 
$ \tilde{\lambda}^j_{\vec{l},Q}=\frac{1}{|20\sqrt{n}Q|}{\tilde{c}^{j}_{\vec{l},Q}}$, 
then 
$$
    \big|\tilde{\lambda}^j_{\vec{l},Q}\big|\lesssim \frac{1}{(1+|l|)^{|\alpha|+|\beta|}}.
$$ 
Obviously,
\begin{eqnarray*}
    J_{Q}(x,y)=\sum_{\vec{l}\in\mathbb{Z}^{2n}} \tilde{\lambda}^j_{\vec{l},Q}
    \frac{1}{|20\sqrt{n}Q|^{1\over 2}} F_{\vec{l}',Q}(x)\frac{1}{|20\sqrt{n}\widehat{Q}|^{1\over 2}}
    G_{\vec{l}'',\widehat{Q}}(y),
\end{eqnarray*}
where
$F_{\vec{l}',Q}(x)=e^{2\pi i\vec{l}'\cdot \widetilde{x}}\, \mathsf{1}_{20\sqrt{n}Q}(x)$ and 
$G_{\vec{l}'',\widehat{Q}}(y)=e^{2\pi i\vec{l}''\cdot \widetilde{y}} \,\mathsf{1}_{20\sqrt{n}\widehat{Q}}(y)$.

Next, suppose that for each $Q\in \mathcal{D}$, we have two functions 
$\varepsilon_{Q} ,\, \eta_{\widehat{Q}}:\mathbb{R}^n \rightarrow \{-1,1\}$ 
with ${\rm supp}\,\varepsilon_{Q}\subset  20\sqrt{n}Q$, 
${\rm supp}\,\eta_{\widehat{Q}}\subset  20\sqrt{n}\widehat{Q}$.
Define the operator $L_{Q}$ as
$$
    \mu^{\frac{1}{2}}(x)L_{Q}(\mu^{-\frac{1}{2}}f)(x)
    =\int_{\mathbb{R}^n} \mu^{\frac{1}{2}}(x) \varepsilon_{Q}(x)J_{Q}(x,y) 
    \eta_{\widehat Q}(y) \mu^{-\frac{1}{2}}(y)f(y)dy.
$$
Considering an arbitrary sequence $\{a_{Q}\}_{Q\in\mathcal {D}}\in \ell^{\frac{n}{n-1},1}$. 
Here $\ell^{\frac{n}{n-1},1}$ is the Lorentz sequence space defined as the set of all sequences 
$\{a_{Q}\}_{Q\in\mathcal{D}}$ such that
$$  
    \big\|\{a_{Q}\}_{Q\in\mathcal{D}}\big\|_{\ell^{\frac{n}{n-1},1}} 
    =\sum_{k=1}^\infty k^{-\frac{1}{n}  }a^*_k, 
$$
where the sequence $\{a^*_k\}$ is  the sequence $\{ |a_Q|\}$ rearranged in a decreasing order.

Define the operator $L$ as
\begin{align*}
    \mu^{\frac{1}{2}}(x)L(\mu^{-\frac{1}{2}}f)(x)
    &=\sum_{Q\in\mathcal{D}}a_{Q}\, \mu^{\frac{1}{2}}(x)L_{Q}(\mu^{-\frac{1}{2}}f)(x).
\end{align*}

{We rewrite 
\begin{align*}
    \mu^{\frac{1}{2}}(x)L(\mu^{-\frac{1}{2}}f)(x)=
    \sum_{Q\in\mathcal{D}}\sum_{\vec{l}\in\mathbb{Z}^{2n}}\tilde{\lambda}^j_{\vec{l},Q}a_{Q}\varepsilon_Q (x)
    \frac{\mu(20\sqrt{n}Q)^{1\over 2} \mu^{-1}(20\sqrt{n}\widehat{Q})^{1\over 2}}{|20\sqrt{n}Q|^{ {1\over 2}}
    |20\sqrt{n}\widehat{Q}|^  {   {1\over 2}  }   }
    \langle f,\tilde{G}_{\vec{l}'',\widehat{Q}}   \rangle \tilde{F}_{\vec{l}',Q}(x),
\end{align*}
where
$$   
    \tilde{G}_{\vec{l}'',\widehat{Q}}
    =\frac{G_{\vec{l}'',\widehat{Q}}\,\eta_{\widehat Q}\,\mu^{-\frac{1}{2}}}{\mu^{-1}(20\sqrt{n}\widehat{Q})^{\frac{1}{2}}} 
         \quad
    \text{and}\quad   
    \tilde{F}_{\vec{l}',Q}= \frac{F_{\vec{l}',Q}\,\mu^{\frac{1}{2}}}{\mu(20\sqrt{n}Q)^{\frac{1}{2}}} .
$$
Then the argument for $F_{\vec{l}',Q,2}$ in the proof of  Theorem~\ref{thm:main3} deduces that $\left\{\tilde{G}_{\vec{l}'',\widehat{Q}}\right\}$ and $\left\{\tilde{F}_{\vec{l}',Q}\right\}$ are  NWO sequences.}
Thus, applying \eqref{eq-NWO1} and the fact 
$\displaystyle  \frac{\mu(20\sqrt{n}Q)^{1\over 2} \mu^{-1}(20\sqrt{n}\widehat{Q})^{1\over 2}}{|20\sqrt{n}Q|^{ {1\over 2}}     |20\sqrt{n}\widehat{Q}|^  {   {1\over 2}  }  }\lesssim 1$ 
to give
$$
    \|L\|_{S^{\frac{n}{n-1},1}\big (L_{\mu}^{2}(\mathbb R^n), L_{\mu}^2 (\mathbb R^n)\big)}
    =\big \|\mu^{\frac{1}{2}}L\mu^{-\frac{1}{2}}\big\|_{S^{\frac{n}{n-1},1}(L^{2}(\mathbb{R}^{n}), L^2(\mathbb R^n))}
    \lesssim\|a_{Q}\|_{\ell^{\frac{n}{n-1},1}}.
$$

Recall that for an integral operator $\mathcal T$ with the associated kernel $K(x,y)$,
$$
    \mathcal T f(x) = \int _X K(x,y)f(y)\ dy,
$$
then
$$
    {\rm Trace}(\mathcal T)  = \int _X K(x,x)\ dx.
$$

By definition,
\begin{align*}
    \lambda^{1\over 2}[b,R_j]L_Q(\mu^{-{1\over 2}}f)(x) 
    &= \int_{20\sqrt{n}Q } \int_{20\sqrt{n}\widehat Q} \lambda^{1\over 2}(x)(b(x)-b(y)) K_j(x,y) 
    \varepsilon_Q(y) J_Q(y,z) \eta_{\widehat Q}(z)\mu^{-{1\over 2}}(z)f(z)\ dzdy ,
\end{align*}
then we have
$$
    {\rm Trace} (\lambda^{1\over 2}[b,R_j]L_Q\,\mu^{-{1\over 2}})
    = \int_{20\sqrt{n}Q} \int_{20\sqrt{n}\widehat Q}\lambda^{1\over 2}(x) (b(x)-b(y))K_j(x,y) 
    \varepsilon_Q(y) J_Q(y,x) \eta_{\widehat Q}(x)\mu^{-{1\over 2}}(x) \ dxdy. 
$$

Combining the definition  \eqref{eqn:JQ} of $J_Q$,  we choose $\varepsilon_{Q}(x)$ and $\eta_{\widehat Q}(y)$ so that 
\begin{align*}
    &{\rm Trace} \left(\lambda^{1\over 2}[b,R_j] L_Q\,\mu^{-{1\over 2}}\right)\\
    &={1\over |20\sqrt{n}Q|^2} \int_{20\sqrt{n}Q }\int_{20\sqrt{n}\widehat Q} 
    \lambda^{1\over 2}(x) |b(x)-b(y)| \mu^{-{1\over 2}}(x) \ dxdy \\
    &\geq {1\over |20\sqrt{n}Q|^2} \int_{\mathsf E_1^Q} \int_{ \mathsf F_1^{\widehat Q}} 
    \nu^{-1}(x) |b(y)-\mathsf m_b(\widehat Q)|  \ dxdy 
    +{1\over |Q|^2} \int_{\mathsf E_2^Q}   \int_{ \mathsf F_2^{\widehat Q}} 
    \nu^{-1}(x) |b(y)-\mathsf m_b(\widehat Q)|  \ dxdy,
\end{align*}
where 
$$
    \mathsf E_1^Q=\left\{x\in 20\sqrt{n}Q:\, b(x)< \mathsf m_b(\widehat Q)\right\},\quad 
    \mathsf E_2^Q=\left\{x\in 20\sqrt{n}Q:\, b(x)>\mathsf m_b(\widehat Q)\right\}
$$
$$
    \mathsf F_1^{\widehat Q}=\left\{y\in 20\sqrt{n}\widehat Q:\, b(y)
    \geq \mathsf m_b(\widehat Q)\right\},\quad \mathsf F_2^{\widehat Q}
    =\left\{y\in 20\sqrt{n}\widehat Q:\, b(y)\leq \mathsf m_b(\widehat Q)\right\},
$$
and $\mathsf m_b(\widehat Q)$ is a median value of $b$ over $20\sqrt{n}\widehat Q$ in the sense of 
$$
    \max\left\{  \left|\left\{y\in 20\sqrt{n}\widehat Q:\, b(y)<\mathsf m_b(\widehat Q)\right\}\right|,\   
    \left|\left\{y\in 20\sqrt{n}\widehat Q:\, b(y)>\mathsf m_b(\widehat Q)\right\}\right| \right\} 
    \leq  \frac{| 20\sqrt{n}\widehat Q|} {2}.
$$

Note that $|\mathsf F_s^{\widehat Q}|\approx |20\sqrt{n}\widehat{Q}|$ for $s=1,2$ and $\nu\in A_2$, 
we have 
$$
    {\rm Trace} \left(\lambda^{1\over 2}[b,R_j] L_Q\,\mu^{-{1\over 2}} \right)
    \gtrsim {\nu^{-1}(\mathsf F_1^{\widehat Q})\over |20\sqrt{n} Q|^2}  \int_{\mathsf E_1^Q} 
    \left|b(y)-\mathsf m_b(\widehat Q)\right|  \ dy
    \approx {1\over \nu(20\sqrt{n}Q) }  \int_{\mathsf E_1^Q} \left |b(y)-\mathsf m_b(\widehat Q) \right|  \ dy
$$
and
$$
    {\rm Trace} \left(\lambda^{1\over 2}[b,R_j] L_Q\,\mu^{-{1\over 2}}\right)
    \gtrsim {\nu^{-1}( \mathsf F_2^{\widehat Q}) \over |20\sqrt{n}Q|^2}  \int_{\mathsf E_2^Q} 
    \left|b(y)-\mathsf m_b(\widehat Q)\right|  \ dy
    \approx {1\over \nu(20\sqrt{n} Q) }  \int_{\mathsf E_2^Q} \left|b(y)-\mathsf m_b(\widehat Q)\right|  \ dy.
$$
Then
$$
    {\rm Trace} \left(\lambda^{1\over 2}[b,R_j] L_Q\,\mu^{-{1\over 2}}\right)
    \gtrsim  {1\over \nu(20\sqrt{n} Q) }  \int_{20\sqrt{n}Q}\left |b(y)-\mathsf m_b(\widehat Q)\right|  \ dy.
$$

Furthermore,
\begin{align*}
	M(b,Q) &:=\frac{1}{\nu(20\sqrt{n}Q)}\int_{20\sqrt{n}Q} \left|b(x)-\langle b\rangle _{20\sqrt{n}Q}\right|dx \\
	&\leq {1\over \nu(20\sqrt{n}Q) }  \int_{20\sqrt{n}Q} \left|b(y)-\mathsf m_b(\widehat Q)\right|  \ dy 
	+\frac{|20\sqrt{n}Q|}{\nu(20\sqrt{n}Q)} \left |\mathsf m_b(\widehat Q)-\langle b\rangle _{20\sqrt{n}Q}\right|\\
	&\leq  {2\over \nu(20\sqrt{n}Q) }  \int_{20\sqrt{n}Q} \left|b(y)-\mathsf m_b(\widehat Q) \right|  \ dy .
\end{align*}

Therefore, by duality, there exists a sequence $\{a_Q\}_{Q\in\mathcal{D}}$ with 
$\|a_{Q}\|_{\ell^{\frac{n}{n-1},1}}\leq1$ such that
\begin{align*}
    \|b\|_{\mathcal W_{\nu}(\mathbb R^n)}&=\|M(b,Q)\|_{\ell^{n,\infty}}\\
    &\lesssim \big \|\operatorname{Trace}\big (\lambda^{\frac{1}{2}}[b,R_j]L_{Q}
    (\mu^{-\frac{1}{2}})\big )\big \|_{\ell^{n,\infty}}\\
    &=\sup_{\|a_{Q}\|_{\ell^{\frac{n}{n-1},1}}\leq1}\sum_{Q\in \mathcal{D}}\operatorname{Trace}
    \big (\lambda^{\frac{1}{2}}[b,R_j]L_{Q}\mu^{-\frac{1}{2}}\big )\cdot a_{Q}\\
    &=\sup_{\|a_{Q}\|_{\ell^{\frac{n}{n-1},1}}\leq1}\operatorname{Trace}
    \big (\lambda^{\frac{1}{2}}[b,R_j]L\mu^{-\frac{1}{2}}\big )\\
    &\lesssim\sup_{\|a_{Q}\|_{\ell^{\frac{n}{n-1},1}}\leq1} \big 
    \|\lambda^{\frac{1}{2}}[b,R_j]\mu^{-\frac{1}{2}}\big \|_{S^{n,\infty}(L^{2}(\mathbb{R}^{n}),L^{2}(\mathbb{R}^{n}))} 
    \big \|\mu^{\frac{1}{2}}L\mu^{-\frac{1}{2}}\big \|_{S^{\frac{n}{n-1},1}(L^{2}(\mathbb{R}^{n}),L^{2}(\mathbb R^n))}\\
    &\lesssim\big \|\lambda^{\frac{1}{2}}[b,R_j]\mu^{-\frac{1}{2}}\big \|_{S^{n,\infty}(L^{2}(\mathbb R^n),L^{2}(\mathbb R^n))}.
\end{align*}
Hence, the proof of the necessary condition in Theorem \ref{thm:main3} is complete.

\section{Application: The Quantised Derivative in the two weight setting}\label{application}
\setcounter{equation}{0}

Let $n$ be a positive integer, and let $x_1, \ldots, x_n$ be the coordinates of $\mathbb{R}^n$. 
For $j=1, \ldots, n$, we define $\mathfrak D_j$ to be the derivative in the direction $x_j$,
$\mathfrak D_j=\frac{1}{i} \frac{\partial}{\partial x_j}=-i \partial_j$. 
When $f \in L^{\infty}\left(\mathbb{R}^n\right)$ is not a smooth function then $\mathfrak D_j f$ 
denotes the distributional derivative of $f$. 
Let $N=2^{\lfloor n / 2\rfloor}$. We use $n$-dimensional Euclidean gamma matrices, 
which are $N \times N$ self-adjoint complex matrices $\gamma_1, \ldots, \gamma_n$ satisfying the anticommutation relation, 
$\gamma_j \gamma_k+\gamma_k \gamma_j=2 \delta_{j, k}, \ 1 \leq j, k \leq n$, where $\delta$ is the Kronecker delta.
The precise choice of matrices satisfying this relation is unimportant so we assume that a choice is fixed for the rest of this paper.
Using this choice of gamma matrices, we can define the $n$-dimensional Dirac operator,
$\mathfrak D=\sum_{j=1}^n \gamma_j \otimes \mathfrak D_j$.
This is a linear operator on the Hilbert space $\mathbb{C}^N \otimes L^2\left(\mathbb{R}^n\right)$ 
initially defined with dense domain $\mathbb{C}^N \otimes \mathcal{S}\left(\mathbb{R}^n\right)$, 
where $\mathcal{S}\left(\mathbb{R}^n\right)$ is the Schwartz space of functions on $\mathbb{R}^n$.
Taking the closure we obtain a self-adjoint operator which we also denote by $\mathfrak{D}$.
We then define the $\operatorname{sign}$ of $\mathfrak{D}$ as the operator $\operatorname{sgn}(\mathfrak{D})$ 
via the Borel functional calculus, i.e., $\operatorname{sgn}(\mathfrak{D}) = {\mathfrak{D}\over |\mathfrak{D}|}.$

Given $f \in {\rm BMO}_{\nu}(\mathbb R^n)$, 
denote by $M_f$ the operator of pointwise multiplication by $f$ on the Hilbert space $L^2\left(\mathbb{R}^n\right)$.
The operator $1 \otimes M_f$ is a bounded linear operator on $\mathbb{C}^N \otimes L^2\left(\mathbb{R}^n\right)$, 
where $1$ denotes the identity operator on $\mathbb{C}^N$. The commutator,
$$
    \bar{d} f:=i\left[\operatorname{sgn}(\mathfrak{D}), 1 \otimes M_f\right]
$$
denotes the quantised derivative of Alain Connes introduced in \cite[$\mathrm{IV}$]{C1994}.

Along with our main results in Theorems \ref{thm:main1}, \ref{thm:main2}, and \ref{thm:main3}, 
we have the analogy of \cite{CST1994,LMSZ2017,FSZ2022,F2022} in the two weight setting.

\begin{theorem}\label{Athm3}
Suppose $n\geq2$, $\lambda,\, \mu\in A_2$ and set $\nu=\mu^{1\over 2}\lambda^{-{1\over 2}}$. 
Suppose $f\in {\rm BMO}_{\nu}(\mathbb R^n)$. 
Then $\bar{d} f\in S^{n,\infty}\big(\mathbb{C}^N \otimes  (L_{\mu}^2(\mathbb R^n) 
\to L_{\lambda}^2(\mathbb R^n) )\big)$  
if and only if  $f\in \mathcal W_{\nu}(\mathbb R^n)$.
Moreover,
$$
     \|\bar{d} f\|_{S^{n,\infty}\big(\mathbb{C}^N \otimes  (L_{\mu}^2(\mathbb R^n) 
     \to  L_{\lambda}^2(\mathbb R^n) )\big)}
     \approx \|f\|_{ \mathcal W_{\nu}(\mathbb R^n)},
$$
where the implicit constants depends on $p,$ $n,$ $[\lambda]_{A_2}$ and $[\mu]_{A_2}$.
\end{theorem}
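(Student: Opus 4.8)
The plan is to reduce Theorem~\ref{Athm3} to the weak Schatten class characterisation of the Riesz commutators already obtained in Theorem~\ref{thm:main3}, via the classical operator identity relating $\operatorname{sgn}(\mathfrak D)$ to the Riesz transforms. Using the anticommutation relation $\gamma_j\gamma_k+\gamma_k\gamma_j=2\delta_{j,k}$ one computes $\mathfrak D^2=\sum_{j,k}\gamma_j\gamma_k\otimes \mathfrak D_j\mathfrak D_k=1\otimes(-\Delta)$, so $|\mathfrak D|=1\otimes(-\Delta)^{1/2}$ and hence, in the functional calculus sense,
$$
    \operatorname{sgn}(\mathfrak D)=\mathfrak D\,(1\otimes(-\Delta)^{-1/2})=\sum_{j=1}^n \gamma_j\otimes \big(\mathfrak D_j(-\Delta)^{-1/2}\big)=i\sum_{j=1}^n \gamma_j\otimes R_j .
$$
Consequently $\bar d f = i[\operatorname{sgn}(\mathfrak D),1\otimes M_f] = \sum_{j=1}^n \gamma_j\otimes [f,R_j]$, up to harmless fixed constants. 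Since $f\in {\rm BMO}_\nu(\R^n)$, Bloom's theorem (as recalled in the introduction) ensures each $[f,R_j]$ is bounded from $L^2_\mu(\R^n)$ to $L^2_\lambda(\R^n)$, so $\bar d f$ is a well-defined bounded operator on $\mathbb C^N\otimes (L^2_\mu(\R^n)\to L^2_\lambda(\R^n))$.

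Next I would compare the Schatten--Lorentz norm of $\bar d f$ with those of its scalar blocks $[f,R_j]$. Conjugating by the weights, $\|\bar d f\|_{S^{n,\infty}(\mathbb C^N\otimes (L^2_\mu\to L^2_\lambda))}=\|(1\otimes \lambda^{1/2})\,\bar d f\,(1\otimes\mu^{-1/2})\|_{S^{n,\infty}(\mathbb C^N\otimes L^2)}$, and likewise for each $[f,R_j]$; so it suffices to work on $\mathbb C^N\otimes L^2(\R^n)$. For the upper bound, the $\gamma_j$ are fixed bounded matrices and, since $n\ge 2$, $S^{n,\infty}$ is equivalent to a Banach ideal norm, so $\|\bar d f\|_{S^{n,\infty}}\lesssim \sum_{j}\|\lambda^{1/2}[f,R_j]\mu^{-1/2}\|_{S^{n,\infty}}$. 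For the lower bound I would invert the block decomposition: multiplying $\bar d f$ on the left by $\gamma_j\otimes 1$ and taking the partial trace over $\mathbb C^N$ recovers, by $\operatorname{tr}(\gamma_j\gamma_k)=N\delta_{j,k}$, the operator $N\,[f,R_j]$; since $S^{n,\infty}$ is a two-sided ideal stable under left multiplication by bounded operators and under the partial trace against $\mathbb C^N$, and these operations commute with the weight conjugations, one gets $\|\lambda^{1/2}[f,R_j]\mu^{-1/2}\|_{S^{n,\infty}}\lesssim \|\bar d f\|_{S^{n,\infty}}$ for every $j$.

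Combining the two comparisons yields $\|\bar d f\|_{S^{n,\infty}(\mathbb C^N\otimes (L^2_\mu\to L^2_\lambda))}\approx \sum_{j=1}^n\|[f,R_j]\|_{S^{n,\infty}(L^2_\mu\to L^2_\lambda)}$, and Theorem~\ref{thm:main3} then identifies the right-hand side with $\|f\|_{\mathcal W_\nu(\R^n)}$ (that theorem requires only $f\in L^1_{\rm loc}$, and $\mathcal W_\nu\subset {\rm VMO}_\nu\subset {\rm BMO}_\nu$ keeps the hypotheses consistent), giving simultaneously the norm equivalence and the ``if and only if''. I expect the only genuinely delicate point to be the bookkeeping in the middle step: verifying carefully that passage between $\bar d f$ and the individual $[f,R_j]$ through the gamma matrices is implemented by operators uniformly bounded on $S^{n,\infty}$ that commute with the weighted conjugations; everything else is either the operator identity for $\operatorname{sgn}(\mathfrak D)$ or a direct appeal to Theorem~\ref{thm:main3}.
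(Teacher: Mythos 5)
Your argument is correct and is essentially the paper's intended one: the paper treats Theorem \ref{Athm3} as a direct application of Theorem \ref{thm:main3} via the standard identity $\operatorname{sgn}(\mathfrak D)=i\sum_{j=1}^n\gamma_j\otimes R_j$, so that $\bar d f$ decomposes into the blocks $[f,R_j]$ and the $S^{n,\infty}$ quasi-norms are compared exactly as you do (quasi-triangle inequality one way, compressions/partial trace against the gamma matrices the other way).
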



\vskip .8cm

\noindent{\bf Acknowledgements:} 
Lacey is supported in part by National Science Foundation DMS Grant. 
Li, Wick and Wu are supported by ARC DP 220100285.
Wick is also supported by National Science Foundation Grants DMS \# 2054863. 
Wu is also supported by  National Natural Science Foundation of China \# 12201002, Anhui Natural Science Foundation of China \# 2208085QA03 and Excellent University Research and Innovation Team in Anhui
Province \#2024AH010002.


\vskip .8cm


\begin{thebibliography}{100}

\bibitem{BL} J. Bergh and J. L\"ofstr\"om.  {\it Interpolation spaces. An introduction.} Grundlehren der Mathematischen Wissenschaften, No. 223. Springer-Verlag, Berlin-New York, 1976.

\bibitem{B} S. Bloom. A commutator theorem and weighted BMO. {\it Trans. Amer. Math. Soc.} {\bf  292} (1985), no. 1, 103--122.

\bibitem{CRW1976} R. Coifman, R. Rochberg and G. Weiss. Factorization theorems for Hardy spaces in several variables. {\it Ann. of Math. (2)}  {\bf  103} (1976),  611--635.
	
\bibitem{C1994} A. Connes.  {\it Noncommutative Geometry}. Academic Press, Inc., San Diego, CA, 1994.
	
\bibitem{CST1994} A. Connes, D. Sullivan and N. Teleman.  Quasiconformal mappings, operators on Hilbert space, and local formulae for characteristic classes.  
{\it Topology} {\bf 33} (1994), no. 4,  663--681.
	
\bibitem{FLMSZ} Z.J. Fan, J. Li, E. McDonald, F. Sukochev and D. Zanin. Endpoint weak Schatten class estimates and trace formula for commutators of Riesz transforms with multipliers on Heisenberg groups. {\it J. Funct. Anal.} {\bf  286} (2024), no. 1, Paper No. 110188.

\bibitem{F2022} R.L. Frank. A characterization of $\dot{W}^{1, p}(\mathbb{R}^{d})$.  {\it Pure Appl. Funct. Anal. } {\bf 9} (2024), no. 1, 53--68.

\bibitem{FSZ2022} R.L. Frank, F. Sukochev and D. Zanin. Asymptotics of singular values for quantum derivatives. {\it Trans. Amer. Math. Soc. }{\bf 376} (2023), no. 3, 2047--2088.

\bibitem{GG2017} H. Gimperlein and M. Goffeng.  Nonclassical spectral asymptotics and Dixmier traces: from circles to contact manifolds. 
{\it Forum Math. Sigma} {\bf 5} (2017), Paper No. e3, 57pp.

\bibitem{GLW2022} Z.B. Gong, J. Li and B.D. Wick.
Besov spaces, Schatten classes and weighted versions of the quantised derivative. {\it Anal. Math. } {\bf  49} (2023), no. 4, 971--1006.

\bibitem{Gra1} L. Grafakos.  {\it Classical Fourier analysis}. Third edition. Graduate Texts in Mathematics, {\bf 249}. Springer, New York, 2014.
 
\bibitem{HLW2017} I. Holmes, M.T. Lacey and B.D. Wick. Commutators in the two-weight setting.  {\it Math. Ann. } {\bf 367} (2017), no. 1--2, 51--80. 
 
\bibitem {HK2012} T. Hyt\"onen and A. Kairema.  Systems of dyadic cubes in a doubling metric space. {\it Colloq. Math.} {\bf 126} (2012),   1--33.

\bibitem{HL1} T. Hyt\"onen and  S. Lappas.  Extrapolation of compactness on weighted spaces. {\it Rev. Mat. Iberoam.} {\bf 39} (2023), no.1, 91--122.

\bibitem{HL2} T. Hyt\"onen and S. Lappas. Extrapolation of compactness on weighted spaces II: Off-diagonal and limited range estimates. arXiv:2006.15858v2.

\bibitem{I2013} J. Isralowitz. Schatten $p$ class commutators on the weighted Bergman space $L_a^2 (\mathbb{B}_n, \mathrm{~d} \nu_\gamma)$ for $2 n /(n+1+\gamma)<p<\infty$. 
{\it Indiana Univ. Math. J.} {\bf 62} (2013), no. 1, 201--233.

\bibitem{JW1982} S. Janson and T.H. Wolff.  Schatten classes and commutators of singular integral operators. {\it Ark. Mat.} {\bf 20} (1982), no.2,  301-310.

\bibitem{JX10} M. Junge,  Q.H. Xu. Representation of certain homogeneous Hilbertian operator spaces and applications. {\it Invent. Math.} {\bf 179} (2010), no. 1, 75--118.
 
\bibitem{LL2022} M.T. Lacey and J. Li.  Compactness of commutator of Riesz transforms in the two weight setting. {\it J. Math. Anal. Appl. } {\bf 508} (1), 125869 (2022). 

\bibitem{LLW2022} M.T. Lacey, J. Li and B.D. Wick. Schatten classes and commutator in the two weight setting, I. Hilbert transform. {\it Potential Anal.} (2023). 

\bibitem{LLWW} M.T. Lacey, J. Li, B.D. Wick and L.C. Wu.
Schatten classes and commutators in the two weight setting, II. Riesz transforms.
arXiv:2404.00329 V1.
 
\bibitem{LPW} J. Li, J. Pipher and  L.A. Ward. Dyadic structure theorems for multiparameter function spaces. {\it Rev. Mat. Iberoam. } {\bf 31}  (2015), no. 3, 767--797.

\bibitem{LMSZ2017} S. Lord, E. McDonald, F. Sukochev and D. Zanin. Quantum differentiability of essentially bounded functions on Euclidean space. 
{\it J. Funct. Anal.} 273 (2017) 2353--2387.

\bibitem{Pau16} J. Pau. Characterization of Schatten-class Hankel operators on weighted Bergman spaces. {\it  Duke Math. J.} {\bf 165} (2016), no. 14, 2771--2791.

\bibitem{P2003} V.V. Peller. {\it  Hankel operators and their applications}. Springer Monographs in Mathematics. Springer-Verlag, New York, 2003.

\bibitem{PTV} S. Petermichl, S. Treil and A. Volberg. Why the Riesz transforms are averages of the dyadic shifts?
Proceedings of the 6th International Conference on Harmonic Analysis and Partial Differential Equations (El Escorial, 2000), {\it Publ. Mat.,} Vol. Extra (2002), 209--228.

\bibitem{Pisier} G. Pisier. {\it Non-commutative vector valued $L_p$-spaces and completely $p$-summing maps}.  Ast\'erisque No. {\bf 247} (1998), vi+131 pp.

\bibitem{RS1988} R. Rochberg and S. Semmes. End point results for estimates of singular values of singular integral operators. 
In: {\it Contributions to operator theory and its applications } (Mesa, AZ, 1987), 217--231, Oper. Theory Adv. Appl., 35, Birkh\"auser, Basel, 1988.

\bibitem{RS1989} R. Rochberg and S. Semmes.  Nearly weakly orthonormal sequences, singular value estimates, and Calderon-Zygmund operators. 
{\it J. Funct. Anal.} {\bf 86} (2), 237--306 (1989). 

\bibitem{Zhu91} K.H. Zhu. Schatten class Hankel operators on the Bergman space of the unit ball. {\it Amer. J. Math. } {\bf 113} (1991), no. 1, 147--167.

\end{thebibliography}
\end{document}